\documentclass[11pt,reqno]{amsart}

\usepackage[utf8]{inputenc}
\usepackage[T1]{fontenc}
\usepackage[english]{babel}
\usepackage{amsmath,amssymb,amsthm,mathtools}
\usepackage{enumerate}
\usepackage{xcolor}
\usepackage{tikz}
\usetikzlibrary{arrows.meta}
\usepackage[colorlinks,citecolor=blue,urlcolor=blue,breaklinks=true]{hyperref}

\def\Xint#1{\mathchoice
{\XXint\displaystyle\textstyle{#1}}%
{\XXint\textstyle\scriptstyle{#1}}%
{\XXint\scriptstyle\scriptscriptstyle{#1}}%
{\XXint\scriptscriptstyle\scriptscriptstyle{#1}}%
\!\int}
\def\XXint#1#2#3{{\setbox0=\hbox{$#1{#2#3}{\int}$ }
\vcenter{\hbox{$#2#3$ }}\kern-.58\wd0}}

\def\avint{\;\Xint-}

\newcommand{\ve}{{\varepsilon}}
\newcommand{\R}{{\mathbb R}}
\newcommand{\N}{{\mathbb N}}
\newcommand{\Z}{{\mathbb Z}}
\newcommand{\W}{{\mathcal W}}
\newcommand{\DD}{{\mathcal D}}
\newcommand{\HH}{{\mathcal H}}
\newcommand{\LL}{{\mathcal L}}
\newcommand{\NN}{{\mathcal N}}
\newcommand{\BB}{{\mathcal B}}
\newcommand{\QQ}{{\mathcal Q}}
\newcommand{\CC}{{\mathcal C}}
\newcommand{\diam}{{\operatorname{diam}}}
\newcommand{\dist}{\operatorname{dist}}
\newcommand{\supp}{{\operatorname{supp}}}
\newcommand{\loc}{{\rm loc}}
\newcommand{\rf}[1]{{(\ref{#1})}}
\newcommand{\wt}[1]{{\widetilde{#1}}}

\definecolor{grisclar}{gray}{0.55}
\newcommand{\by}[1]{\stackrel{\text{\color{grisclar}#1}} }

\newcommand{\qedformula}{\pushQED{\qed}\qedhere\popQED}
\newcommand{\qedOut}{\renewcommand{\qedsymbol}{}\vspace{-1em}}

\newtheorem{theorem}{Theorem}[section]
\newtheorem{lemma}[theorem]{Lemma}
\newtheorem{proposition}[theorem]{Proposition}
\newtheorem{corollary}[theorem]{Corollary}

\theoremstyle{definition}
\newtheorem{definition}[theorem]{Definition}
\newtheorem{example}[theorem]{Example}

\newtheorem{remark}[theorem]{Remark}

\newtheorem{theoremletter}{Theorem}

\newtheorem{corollaryletter}[theoremletter]{Corollary}
\newtheorem{lemmaletter}[theoremletter]{Lemma}

\numberwithin{equation}{section}

\title[Weak reverse H\"older inequalities for doubling measures]
      {Weak reverse H\"older inequalities for doubling measures\\
       on closed subsets of $\R^d$}

\author{Mart\'i Prats}
\address{Departament de Matem\`atiques, Universitat Aut\`onoma de Barcelona,
         08193 Bellaterra (Barcelona), Catalonia}
\email{marti.prats@uab.cat}

\subjclass[2020]{42B25, 42B35, 30L99}
\keywords{Muckenhoupt weights, reverse H\"older inequalities, doubling measures,
          Bennewitz--Lewis condition, Gehring's lemma, dyadic grids,
          Whitney decomposition}

\thanks{The author was supported by the grants PID2024-160507NB-I00 (AEI) 
and 2021-SGR-00087 (AGAUR)}

\begin{document}

\begin{abstract}
Let $\mu$ be a doubling Radon measure in $\R^d$ with support $X$. We prove a local and quantitative
form of the Bennewitz--Lewis theorem in this setting: if a Radon measure $\nu$
satisfies the Bennewitz--Lewis condition on a bounded relatively open set
$V\subset X$ with a small enough constant, then $\nu|_V\ll\mu|_V$ and its
density satisfies a reverse H\"older inequality on every compact subset
$K\subset V$. 
Along the way we
develop the dyadic toolkit these arguments run on --- triple cubes defined
through neighbours, the basis $\DD\cup3\DD$, and a Whitney decomposition --- and we collect the Muckenhoupt-type
classes of weights over that basis.
The same machinery yields a
local Gehring lemma for enlarged balls: both theorems are deduced from a
single bootstrapping lemma, stated once for $p=1$ and $p>1$. \end{abstract}

\maketitle

\section{Introduction}
\label{secIntro}

For the Lebesgue measure in $\R^d$, the Muckenhoupt class
$A_\infty=\bigcup_{p>1}A_p$ admits a long list of mutually equivalent
descriptions: a reverse H\"older inequality, a reverse Jensen inequality, an
$L\log L$ bound, a Fujii--Wilson maximal condition, and the quantitative
absolute continuity condition with which Muckenhoupt \cite{Muckenhoupt}
originally identified the class. Outside that classical setting the list
breaks apart. Duoandikoetxea, Mart\'in-Reyes and Ombrosi
\cite{DuoandikoetxeaMartinReyesOmbrosi} formulated eight of these conditions
over an arbitrary basis of sets and showed that for some bases no two of the
resulting classes coincide.

The situation studied here sits between these two extremes. Let $\mu$ be a
doubling Radon measure in $\R^d$ and put $X:=\supp\mu$, which we allow to be a
proper and possibly nowhere dense closed subset of $\R^d$ --- a Cantor set or
the boundary of a rough domain, for instance. All averages are taken
over the \emph{restricted balls}
$$\Delta_r(x):=B_r(x)\cap X,\qquad x\in X,\ r>0,$$
and over a system of dyadic cubes adapted to $X$ in the sense of Christ
\cite{Christ} and David \cite{DavidCubes}. We write $t\Delta_r(x):=\Delta_{tr}(x)$ for the
concentric dilate of a restricted ball.

The triple $(X,|\cdot|,\mu)$ is then a metric measure space which is doubling,
which is complete because $X$ is closed, and whose quasi-triangle constant
equals $1$; and the restricted balls are precisely its metric balls.
Consequently every theorem about balls in a doubling metric measure space, or
in a space of homogeneous type, applies here verbatim. 

Two classical implications break down here, and it is worth saying precisely
which. For $1<p<\infty$ consider the reverse H\"older inequality
\begin{equation}\label{eqStrongRHintro}
  \left(\avint_{\Delta} w^p\, d\mu\right)^{\frac1p}
  \leq C\avint_{\Delta} w\, d\mu
  \qquad\text{for every restricted ball } \Delta,
\end{equation}
in which \emph{both} averages are taken over the same ball. We call
\rf{eqStrongRHintro} the \emph{strong} reverse H\"older inequality, to
distinguish it from the \emph{weak} one, in which the right-hand side is
averaged over $2\Delta$ instead. The strong inequality implies the weak
one, trivially and in any of the settings below; the converse
fails already for the Lebesgue measure, as Remark \ref{remExpExample} recalls.
For the Lebesgue measure, however, the strong inequality sits inside a closed
web of equivalences: a weight lies in $A_\infty$ if and only if it satisfies
\rf{eqStrongRHintro} for some $p>1$ (see \cite{HytonenPerez,HytonenPerezRela}
and the references therein), and if it satisfies \rf{eqStrongRHintro} then it
satisfies the same inequality with $p+\ve$ in place of $p$, for some $\ve>0$
\cite{Gehring}.

That web comes apart once the underlying space is allowed to be a general
space of homogeneous type: both directions of the equivalence fail, and so
does the self-improvement. Anderson, Hyt\"onen and Tapiola
\cite[\S7]{AndersonHytonenTapiola} build a ``haircomb'' space --- an infinite
line in $\R^2$ with finite segments attached, carrying the $\ell^\infty$
metric and arclength measure --- together with a family of weights $f_h$ on
it. For a suitable choice of $h$
(\S7.3) it satisfies \rf{eqStrongRHintro} exactly for $p\leq 1/\alpha$, so
that no self-improvement is possible. The root cause was identified by
Anderson and Weirich \cite[Thm.~5.1]{AndersonWeirich}: those weights do not
define doubling measures, and \cite[Thm.~5.2]{AndersonWeirich} shows that a
doubling weight satisfies the strong Gehring lemma in any space of homogeneous
type. Korte and Kansanen \cite[Ex.~4.1]{KorteKansanen} give a
correspondingly elementary witness for the other direction of the equivalence:
a weight on the union of two hyperplanes satisfying \rf{eqStrongRHintro} with
$p=1+\ve$ whose associated measure is not doubling.

None of these examples, however, settles the matter for \emph{restricted
Euclidean} balls, and this is worth being precise about, because the setting
of this paper is a special case of a space of homogeneous type and not a
general one. All three are built on the fact that
``\emph{balls are actually squares}'', and rely on the same thing:
a flat piece of $X$ enters an $\ell^\infty$-ball all at once, whereas it
enters a Euclidean ball gradually. Figure \ref{figHaircomb} shows the
configuration on which all three turn. What does survive in our setting is the
root failure, and Example \ref{exLadder} below exhibits it: on a closed subset
of $\R$ carrying a doubling measure, and with restricted Euclidean balls,
\rf{eqStrongRHintro} does not force the weight to define a doubling measure.
By \cite[Thm.~5.2]{AndersonWeirich} that is exactly the hypothesis under which
the strong Gehring lemma would be recovered. Whether the Euclidean analogues
of \cite[\S7.2 and \S7.3]{AndersonHytonenTapiola} hold we do not know; see
\S\ref{secOpen}.

\begin{figure}[ht]
\centering
\begin{tikzpicture}[
	scale=1.05,
	xset/.style={line width=0.9pt},
	aline/.style={line width=0.9pt,gray},
	ball/.style={densely dashed},
	lbl/.style={font=\scriptsize,inner sep=1.5pt}
]
	\begin{scope}
		\draw[aline] (-0.6,0) -- (2.6,0);
		\node[lbl,gray,anchor=east] at (-0.65,0) {$A$};
		\draw[xset] (0,0) -- (1,0.5) -- (1,1);
		\node[lbl,anchor=west] at (1.06,0.72) {$W_j$};
		\draw[ball] (0,0) rectangle (2,2);
		\fill (1,1) circle (0.8pt);
		\node[lbl,anchor=south west] at (1.04,1.02) {$x$};
		\node[lbl,anchor=south] at (1,2.75) {(a) $\ell^\infty$};
	\end{scope}
	\begin{scope}[xshift=5.6cm]
		\draw[aline] (-1.35,0) -- (3.35,0);
		\node[lbl,gray,anchor=east] at (-1.4,0) {$A$};
		\draw[xset] (0,0) -- (1,0.5) -- (1,1);
		\node[lbl,anchor=west] at (1.06,0.72) {$W_j$};
		\draw[ball] (1,1) circle (1.5);
		\draw[line width=2pt] (-0.118,0) -- (2.118,0);
		\fill (1,1) circle (0.8pt);
		\node[lbl,anchor=south west] at (1.04,1.02) {$x$};
		\node[lbl,anchor=south] at (1,2.75) {(b) Euclidean};
	\end{scope}
\end{tikzpicture}
\caption{The configuration behind
\cite[\S7]{AndersonHytonenTapiola} and
\cite[Ex.~4.1]{KorteKansanen}. In (a) the $\ell^\infty$-ball centred at
$x=(10j+1,1)$ of radius $1$ meets $X$ exactly in the whisker $W_j$: the line
$A$ lies on its boundary and contributes nothing, so the average of the weight
over the ball sees only the spike. In (b) the foot of the whisker is at
Euclidean distance $\sqrt2$ from $x$, so a Euclidean ball that contains it has
radius at least $\sqrt2$ and therefore meets $A$ in a segment of length at
least $2$, whose contribution cannot be avoided. This is what the three
examples use, and what they lose over restricted Euclidean balls.}
\label{figHaircomb}
\end{figure}

The remedy, present already in Bennewitz and Lewis \cite{BennewitzLewis}, is to
allow a dilation: one compares an average of $w^p$ over $\Delta$ with an
average of $w$ over $2\Delta$. Anderson, Hyt\"onen and Tapiola
\cite{AndersonHytonenTapiola} developed the resulting classes systematically
in a space of homogeneous type, showed that they do not depend on the dilation
factor, identified the weak $A_\infty$ class with the union of the weak
reverse H\"older classes, and proved that the latter self-improve. Kinnunen,
Kurki and Mudarra \cite{KinnunenKurkiMudarra} sharpened the picture for
doubling metric measure spaces: they allow the weight to be defined only on an
open subset, and give ten equivalent formulations, among them the
quantitative absolute continuity condition
$$\mu(F)\leq\ve\,\mu(\Delta)\implies \nu(F)\leq\eta\,\nu(2\Delta).$$

One condition in this circle of ideas has no counterpart in that literature.

\begin{definition}\label{defBLintro}
Let $V\subset X$ be relatively open and let $0<\ve_0<1$. We say that a Radon
measure $\nu$ supported in $X$ belongs to $wBL_{\ve_0}(\mu,V)$ if there exist
constants $0<\delta_0<1$ and $C_0\geq1$ such that
\begin{equation}\label{eqBLintro}
  \mu(2\Delta\cap E)<\delta_0\,\mu(2\Delta)
  \implies
  \nu(\Delta)\leq \ve_0\,\nu(2\Delta\cap E)+C_0\,\nu(2\Delta\setminus E)
\end{equation}
for every Borel set $E\subset 2\Delta\subset V$.
\end{definition}

Condition \rf{eqBLintro} is formally weaker than the quantitative absolute
continuity condition above: it does not assert that $\nu(\Delta)$ is small,
only that it is controlled by a small multiple of the mass $\nu$ puts on the
large part of $E$, plus a fixed multiple of the mass $\nu$ puts on the
complement. This is what makes it usable: the conclusion of Theorem
\ref{theoAintro} below is a reverse H\"older inequality, and is therefore
strong information, but the hypothesis \rf{eqBLintro} is weak enough to be verified
for measures that one has little control over.

Bennewitz and Lewis introduced \rf{eqBLintro} in order to prove a weak reverse
H\"older inequality for harmonic measure on a domain satisfying a uniform
interior big pieces of Lipschitz graphs condition but no Harnack chain
condition, a setting in which harmonic measure need not be doubling. The
mechanism is their Lemma~3.1, which is entirely measure-theoretic; but it is
stated with the surface measure $\HH^{d-1}$ of the boundary of a bounded
Ahlfors regular domain as the underlying measure, and on a single ball as the
reference set.

The first result removes both restrictions.

\begin{theoremletter}\label{theoAintro}
Let $\mu$ be a doubling Radon measure in $\R^d$ with doubling constant
$C_\mu$, let $X=\supp\mu$, let $V\subset X$ be a bounded relatively open set
and let $K\subset V$ be compact with $\mu(K)>0$. Let $\nu$ be a Radon measure
supported in $X$ with
$$\nu\in wBL_{\ve_0}(\mu,V),\qquad \ve_0\leq\ve_*,$$
where $\ve_*>0$ depends only on $d$ and on $C_\mu$, and is made explicit in
\S\ref{secBL}.
Then $\nu|_V\ll\mu|_V$ and, writing $w=\frac{d\nu}{d\mu}$, there exists
$q_0>1$ depending on $d$, on $C_\mu$ and on the constants $\delta_0,C_0$ of
\rf{eqBLintro}, such that for every $1<q\leq q_0$
$$\left(\avint_K w^q\,d\mu\right)^{\frac1q}\leq C\,\frac{\nu(V)}{\mu(V)},$$
where $C$ depends in addition on $q$, on
$\dfrac{\diam K}{\dist(K,X\setminus V)}$ and on $\dfrac{\mu(V)}{\mu(K)}$.
\end{theoremletter}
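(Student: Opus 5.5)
The plan is to follow the Bennewitz--Lewis strategy, but run it on a Whitney-type dyadic decomposition of $V$ rather than on a single ball. The argument has two parts. First, condition \rf{eqBLintro} yields a good-$\lambda$ (Calder\'on--Zygmund) inequality for a dyadic maximal function of $\nu$. Second, that inequality is integrated and the result bootstrapped into a reverse H\"older bound. Absolute continuity comes out of the first step.

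\emph{Step 1: absolute continuity.} Take $F\subset V$ compact with $\mu(F)=0$. For any restricted ball with $2\Delta\subset V$, apply \rf{eqBLintro} to $E=F\cap 2\Delta$. The hypothesis $\mu(2\Delta\cap E)=0<\delta_0\mu(2\Delta)$ then holds trivially, and we get
$$\nu(\Delta\cap F)\le\nu(\Delta)\le \ve_0\,\nu(2\Delta\cap F)+C_0\,\nu(2\Delta\setminus F).$$
Now cover $F$ by small balls centred on $F$, chosen via Lebesgue differentiation of $\nu$ with respect to the doubling measure so that $\nu(2\Delta\setminus F)$ is a small fraction of $\nu(2\Delta)$. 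Besicovitch (or a $5r$-covering with the doubling of $\mu$ through the dyadic cubes) then gives $\nu(F)\le C(d)\ve_0\,\nu(F)+\text{small}$. When $C(d)\ve_0<1$, this forces $\nu(F)=0$. This step is one source of the smallness $\ve_*$, and it must depend only on $d$ and $C_\mu$ through covering constants.

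\emph{Step 2: good-$\lambda$ inequality.} Fix a Whitney decomposition $\mathcal W$ of $V$ and the dyadic cubes $\DD$ adapted to $X$. Cover $K$ by finitely many Whitney cubes; their number is controlled by $\diam K/\dist(K,X\setminus V)$ and the doubling of $\mu$. Inside a cube $Q_0$ with $3Q_0\subset V$, consider the localized dyadic maximal function
$$M w(x)=\sup_{x\in Q\subset Q_0}\frac{\nu(Q)}{\mu(Q)}.$$
Stopping cubes $Q_j$ at level $\lambda$ and their children at level $A\lambda$ give the standard setup. For each stopping cube, pass to a restricted ball $\Delta$ with $Q_j\subset\Delta$ and $2\Delta\subset 3Q_j\subset V$, using the triple cubes $3\DD$. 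Take $E=\{M w>A\lambda\}\cap 2\Delta$, which has small $\mu$-density by the weak type $(1,1)$ bound once $A$ is large.

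Condition \rf{eqBLintro} then yields
$$\nu(\{Mw>A\lambda\}\cap Q_j)\le \ve_0\,\nu(E)+C_0\,\nu(2\Delta\setminus E).$$
On $2\Delta\setminus E$ the density is at most $A\lambda$ in the maximal sense, so that term is bounded by $C A\lambda\,\mu(Q_j)$. Summing over $j$, with bounded overlap of the $3Q_j$, gives
$$\nu(\{Mw>A\lambda\})\le C\ve_0\,\nu(\{Mw>\lambda/C\})+C C_0 A\lambda\,\mu(\{Mw>\lambda\}).$$

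\emph{Step 3: integration and bootstrap.} Multiply by $\lambda^{q-2}$ and integrate from $\lambda_0\sim\nu(V)/\mu(V)$ upward, working with the truncation $\min(Mw,N)$ so that every quantity is finite. This gives
$$\int (Mw)^{q-1}\,d\nu\le C\ve_0 A^{q-1}C^{q-1}\int (Mw)^{q-1}\,d\nu+C_{q}\int (Mw)^{q}\,d\mu+\text{boundary term}.$$
The last integral is still of the same order as the left-hand side, so it cannot simply be absorbed. This is the main obstacle, and the "single bootstrapping lemma" mentioned in the abstract is presumably what resolves it.

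I would handle it by running the argument with $\nu$ on the left and $\mu$-measure on the right. The $\mu$-measure of the level sets is controlled by $\lambda^{-1}\nu(\{Mw>\lambda/2\})$ through the CZ property $\nu(Q_j)\approx\lambda\mu(Q_j)$. From this one obtains
$$\int_\lambda^\infty \nu(\{Mw>t\})\,dt\;\lesssim\;\lambda\,\nu(\{Mw>\lambda\}),$$
which is a Gehring-type inequality. Gehring's iteration then gives integrability of $(Mw)^{q-1}$ against $\nu$, that is of $w^q$ against $\mu$, for $q\le q_0$, where $q_0-1$ is small depending on $\delta_0$, $C_0$ and $C_\mu$.

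Finally, sum over the Whitney cubes covering $K$. Normalize each average by $\nu(3Q_0)/\mu(Q_0)\le \nu(V)/\mu(Q_0)$, and compare $\mu(Q_0)$ with $\mu(V)$ using doubling, the ratio $\mu(V)/\mu(K)$, and the number of cubes. This produces the dependence of $C$ stated in Theorem~\ref{theoAintro}.
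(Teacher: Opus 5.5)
There is a genuine gap at the core of Steps 2--3, and it comes from your choice of $E$. With $E=\{Mw>A\lambda\}\cap2\Delta$, the term $C_0\,\nu(2\Delta\setminus E)$ is of size $C_0A\lambda\,\mu(Q_j)$. So the good-$\lambda$ inequality you derive, $\nu(\{Mw>A\lambda\})\le C\ve_0\,\nu(\cdots)+CC_0A\lambda\,\mu(\{Mw>\lambda\})$, is weaker than what the stopping rule gives for free: $\nu(\{Mw>A\lambda\})\le\nu(\{Mw>\lambda\})\le C_\mu\lambda\,\mu(\{Mw>\lambda\})$. Condition \rf{eqBLintro} has not been used effectively, and that is why $C_q\int(Mw)^q\,d\mu$ cannot be absorbed.

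Your remedy does not repair this. The bound $\lambda\,\mu(\{Mw>\lambda\})\le\nu(\{Mw>\lambda\})$ is the Chebyshev direction, valid for every weight. The target $\int_\lambda^\infty\nu(\{Mw>t\})\,dt\lesssim\lambda\,\nu(\{Mw>\lambda\})$ is false in general. At the lowest level it forces $\int w^2\,d\mu\le\int Mw\,d\nu<\infty$. That fails for $w(x)=x^{-a}$ on $[0,1]$ with $\frac12\le a<1$, an $A_1$ weight which lies in $wBL_{\ve}$ for every $\ve$ by Lemmas \ref{lemWeakTrivial1} and \ref{lemWeakTrivial2}.

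What is missing is an estimate in the opposite direction, on the $\mu$ side: the $\mu$-measure of the stopping region at a level must be controlled by the $\mu$-measure of a \emph{lower} level set. Write $A^{j-1}=\{\phi w>\alpha_{j-1}\}$, let $U^j$ be the union of the stopping cubes $P$ at level $\alpha_j=\rho\alpha_{j-1}$, and let $Q$ be the Whitney cube containing $P$. The paper applies the dyadic form of \rf{eqBLintro} with $E=A^{j-1}\cap3P$ and argues by contradiction. If $\mu(A^{j-1}\cap3P)<\delta_0\,\mu(3P)$, then $\nu(P)\le\ve_0\,\nu(3P)+C_0\,\nu(3P\setminus A^{j-1})$.

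The last term is at most $C_0C_\mu^3\alpha_{j-1}\mu(P)/\mu(Q)$, far below $\nu(P)>\alpha_j\mu(P)/\mu(Q)$ once $\rho$ is large. The $\ve_0$-term is absorbed because $P$ is $\nu$-doubling, $\nu(3P)\le(1+C_\mu^4)\nu(P)$. This is not automatic: it holds only after restricting to stopping cubes whose triples are maximal, so that every neighbour of $P$ lies below the level. It is also the source of the threshold $\ve_0\le(2+2C_\mu^4)^{-1}$. The outcome $\mu(U^j)\lesssim\mu(A^{j-1})$, combined with $\int_{A^j}\phi w\,d\mu\le C_\mu\alpha_j\,\mu(U^j)$, is exactly the $p=1$ hypothesis of Lemma \ref{lemLevelToQ}, with $\mu$ and not $\nu$ on the right.

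Two further steps fail as written.

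First, the localization. If you run the maximal function separately in each Whitney cube $Q_0$, a stopping cube near the edge of $Q_0$ has neighbours in $3Q_0\setminus Q_0$ about which the stopping time says nothing. So neither the smallness of $\mu(E)$ in $2\Delta$ nor any $\nu$-doubling of the stopping cube is available. Moreover, triples of disjoint stopping cubes of different sizes need not overlap boundedly. A Besicovitch selection can only be paid for on the $\mu$ side, since $\nu$ is not doubling. The paper runs a single iteration on $f=\phi w$, where $\phi$ is the $\mu$-measure of the Whitney cube. Lemma \ref{lemNeighborWhitney} then makes the levels compatible across neighbouring Whitney cubes, and Lemma \ref{lemBesicovitchSelection} is used only to bound $\mu(U^j)$.

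Second, Step 1. At a $\nu$-density point of $F$ the condition only gives $\nu(\Delta)\le(\ve_0+o(1))\,\nu(2\Delta)$. This contradicts nothing unless $\Delta$ is $\nu$-doubling, and Besicovitch bounds the overlap of the $\Delta_i$, not of the $2\Delta_i$. You need, as in Lemma \ref{lemBLAC}, that $\nu$-almost every point carries arbitrarily small balls with $\nu(\kappa B)\le C\,\nu(B)$, with $C=C(d,C_\mu)$. This follows because the points without such balls satisfy $\nu(B_r(x))/\mu(B_r(x))\to0$ and hence form a $\nu$-null set. With those balls the contradiction is pointwise once $\ve_0<C^{-1}$.
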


Taking $V$ to be a ball and letting $K$ exhaust it recovers the weak reverse
H\"older inequality, and hence the missing link in the chain of equivalences:

\begin{corollaryletter}\label{coroBintro}
If $\mu$ is a doubling Radon measure in $\R^d$ and $V\subset X$ is relatively
open, then
$$wRH_1^U(\mu,V)=wRH_1^{QC}(\mu,V)=wRH_1^{BL}(\mu,V).$$
\end{corollaryletter}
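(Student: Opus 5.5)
The plan is to prove the chain of inclusions
$$wRH_1^U(\mu,V)\subset wRH_1^{QC}(\mu,V)\subset wRH_1^{BL}(\mu,V)\subset wRH_1^U(\mu,V).$$
The first two inclusions are elementary, measure-theoretic manipulations. The third is where Theorem \ref{theoAintro} does all the work.

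I read the three classes as follows. $wRH_1^U$ is the union over $q>1$ of the weak reverse H\"older classes on balls with $2\Delta\subset V$. $wRH_1^{QC}$ is the quantitative absolute continuity condition: for every $\eta>0$ there is $\ve>0$ such that $\mu(F)\le\ve\mu(\Delta)$ implies $\nu(F)\le\eta\,\nu(2\Delta)$ for $F\subset\Delta$. $wRH_1^{BL}$ is $wBL_{\ve_0}(\mu,V)$ with $\ve_0\le\ve_*$. If the dilation factors in the definitions differ, I would first normalise them using the dilation-independence already known from \cite{AndersonHytonenTapiola,KinnunenKurkiMudarra}.

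\emph{$U\subset QC$.} Let $w$ satisfy the weak reverse H\"older inequality with exponent $q$, and let $F\subset\Delta$. By H\"older's inequality,
$$\nu(F)=\int_F w\,d\mu\le \mu(F)^{1/q'}\mu(\Delta)^{1/q}\Bigl(\avint_\Delta w^q\,d\mu\Bigr)^{1/q}\le C\,\mu(F)^{1/q'}\mu(\Delta)^{1/q}\avint_{2\Delta}w\,d\mu .$$
Since $\mu(2\Delta)\le C_\mu\,\mu(\Delta)$, this is at most $C\,C_\mu\,(\mu(F)/\mu(\Delta))^{1/q'}\,\nu(2\Delta)$. Given $\eta>0$, it therefore suffices to take $\ve=(\eta/(C C_\mu))^{q'}$.

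\emph{$QC\subset BL$.} Given $\ve_0\le\ve_*$, take $\eta=\ve_0$ and let $\ve$ be the corresponding QC constant. Set $\delta_0=\ve/C_\mu$. Suppose $\mu(2\Delta\cap E)<\delta_0\mu(2\Delta)$, and put $F=\Delta\cap E$. Then $\mu(F)\le\ve\mu(\Delta)$, so $\nu(F)\le\ve_0\nu(2\Delta)$. Hence
$$\nu(\Delta)\le\nu(\Delta\cap E)+\nu(\Delta\setminus E)\le\ve_0\,\nu(2\Delta\cap E)+(1+\ve_0)\,\nu(2\Delta\setminus E).$$
This is \rf{eqBLintro} with $C_0=2$.

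\emph{$BL\subset U$.} This is the real content, and the main point to check is that Theorem \ref{theoAintro} applies with constants uniform in the ball. Fix $\Delta=\Delta_r(x)$ with $2\Delta\subset V$. Apply Theorem \ref{theoAintro} with $V$ replaced by $V':=2\Delta$, which is bounded and relatively open, and with $K:=\overline{B_r(x)}\cap X$.

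First, $\nu\in wBL_{\ve_0}(\mu,V')$ with the same $\delta_0$ and $C_0$, since every admissible $E\subset2\Delta'\subset V'$ is admissible in $V$. Second, the geometric quantities are controlled: $\diam K\le 2r$, while $\dist(K,X\setminus V')\ge r$ because $X\setminus V'$ lies at distance at least $2r$ from $x$. Third, $\mu(V')/\mu(K)\le C_\mu$, which also gives $\mu(K)>0$.

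Thus the constant $C$ and the exponent $q_0$ from Theorem \ref{theoAintro} depend only on $d$, $C_\mu$, $\delta_0$, $C_0$ and $q$, not on $\Delta$. Fix $q=q_0$. We obtain $\nu|_{2\Delta}\ll\mu$, and since $V$ is covered by such balls, $\nu|_V\ll\mu|_V$. Using $\Delta\subset K$ and $\mu(K)\le C_\mu\mu(\Delta)$,
$$\Bigl(\avint_\Delta w^q\,d\mu\Bigr)^{1/q}\le C_\mu^{1/q}\Bigl(\avint_K w^q\,d\mu\Bigr)^{1/q}\le C\,C_\mu^{1/q}\,\frac{\nu(2\Delta)}{\mu(2\Delta)}=C'\avint_{2\Delta}w\,d\mu .$$
Here the last equality uses the absolute continuity just established. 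This is the weak reverse H\"older inequality, so $\nu\in wRH_1^U(\mu,V)$ and the chain closes.
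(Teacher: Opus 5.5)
Your proof is correct and follows the paper's route. The inclusions $wRH_1^U\subset wRH_1^{QC}\subset wRH_1^{BL}$ are Lemmas \ref{lemWeakTrivial1} and \ref{lemWeakTrivial2}: the same H\"older estimate, and the same shrinking of the threshold by $\mu(\Delta)/\mu(2\Delta)\geq C_\mu^{-1}$ with $C_0=1+\ve_0$. That step works for every $\ve_0\in(0,1)$, which is what the paper's definition $wRH_1^{BL}=\bigcap_{\ve}wBL_{\ve}$ requires. The closing inclusion is Theorem \ref{theoAintro} with $V$ replaced by $2\Delta$ and $K$ a closed ball, where $\diam K/\dist(K,X\setminus 2\Delta)\leq 2$ and $\mu(2\Delta)/\mu(K)\leq C_\mu$ make the constants uniform.

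One caveat, shared with the paper: the definition of $wRH_p$ requires $w>0$ $\mu$-a.e., and neither argument yields this, nor can it. Indeed $\nu=0$, or $\nu=\chi_{\{x_d>0\}}\LL^d$ in $\R^d$, lies in $wRH_1^{QC}$ and $wRH_1^{BL}$ but has a density that is not positive a.e. So the equality should be read with positivity either dropped from $wRH_p$ or imposed on all three classes.
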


The first of the two equalities is, in this generality, due to Kinnunen, Kurki
and Mudarra \cite[Thm.~4.1]{KinnunenKurkiMudarra}; what is proved here is the
inclusion $wRH_1^{BL}\subset wRH_1^U$. 

Figure \ref{figImplications} places this result, and the one below, among the
elementary implications between the classes involved, and says which of them
need a doubling hypothesis.

Behind that figure there are, over the basis $\DD\cup3\DD$ and for $\mu$
$\DD$-doubling in $Q_0$, only two classes. Write $QC(\mu,\BB_{Q_0})$ for the
set of measures $\nu$ for which there is one pair of constants
$0<\delta,\ve<1$ with
$$\mu(E)<\delta\,\mu(B)\implies\nu(E)\leq\ve\,\nu(B)$$
for every $B\in\DD(Q_0)\cup3\DD(Q_0)$ and every Borel $E\subset B$.
Then $\nu$ satisfies the reverse H\"older inequality \rf{eqStrongRHintro} over
that basis for some $p>1$ if and only if $\nu\in QC(\mu,\BB_{Q_0})$, and $\nu$
belongs to $A_p$ over that basis for some $p<\infty$ if and only if
$\nu\in QC(\mu,\BB_{Q_0})$ and $\nu$ is itself $\DD$-doubling. These are
Theorems \ref{theoEqualityRH} and \ref{theoEqualityA}; what the dyadic layer
buys is that a condition carrying two quantifiers is replaced by a single pair
of constants. The second class is strictly smaller than the first, already for
the Lebesgue measure in $\R$, by Example \ref{exStaircase}. For restricted
Euclidean balls the gap closes as soon as $\mu$ has thin annuli, by
Proposition \ref{propoThinAnnuli}, and Example \ref{exLadder} shows that
without them it need not.

The second result is the self-improvement, in the same local form.

\begin{theoremletter}\label{theoCintro}
Let $\gamma>1$, let $\mu$ be a doubling Radon measure in $\R^d$ with doubling
constant $C_\mu$, let $V\subset X$ be bounded and relatively open, and let
$K\subset V$ be compact with $\mu(K)>0$. Let $1<p<\infty$ and let
$\nu\ll\mu$, with density $w=\frac{d\nu}{d\mu}$, satisfy
$$\left(\avint_{\Delta}w^p\, d\mu\right)^{\frac1p}
\leq C_{RH}\avint_{\gamma\Delta}w\, d\mu
\qquad\text{for every restricted ball }\Delta\text{ with }\gamma\Delta\subset V .$$
Then there is $\ve_0>0$, depending on $d$, $C_\mu$ and $p$ and on $\gamma$
and $C_{RH}$ only through the product $\gamma^{d}C_{RH}$, such that for every
$0<\ve<\ve_0$
$$\left(\avint_{K}w^{p+\ve}\, d\mu\right)^{\frac1{p+\ve}}
\leq C\left(\avint_{V}w^{p}\, d\mu\right)^{\frac1p},$$
with $C$ depending in addition on $\ve$, on
$\frac{\diam K}{\dist(K,X\setminus V)}$ and on $\frac{\mu(V)}{\mu(K)}$.
\end{theoremletter}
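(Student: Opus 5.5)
The plan is to deduce Theorem \ref{theoCintro} from the dyadic machinery announced in the introduction: the basis $\DD\cup3\DD$, a Whitney decomposition of $V$, and a single bootstrapping (Gehring-type) lemma. The conclusion only concerns $K$, so I will cover $K$ by finitely many dyadic cubes $Q$ of side comparable to $\rho:=\dist(K,X\setminus V)/C$. With the constant chosen suitably large, $\gamma\Delta\subset V$ holds for every restricted ball $\Delta$ at scale $\lesssim \ell(Q)$ centred in $3Q$. The number of such cubes and their measures relative to $\mu(V)$ are controlled by $\diam K/\dist(K,X\setminus V)$, $\mu(V)/\mu(K)$ and the doubling of $\mu$. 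Summing the local estimates over these cubes then gives the stated bound on $K$.

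\textbf{Step 1 (balls to cubes).} I first transfer the ball hypothesis to the dyadic basis. Each cube $R\in\DD(Q_0)$ or $R\in3\DD(Q_0)$ contained in a Whitney-type region is covered by a restricted ball $\Delta_R$ with $\mu(\Delta_R)\approx\mu(R)$. Conversely, $\gamma\Delta_R$ is covered by boundedly many cubes of comparable size. Doubling then gives
\[
\Bigl(\avint_R w^p\,d\mu\Bigr)^{1/p}\le C'\avint_{\widehat R}w\,d\mu,
\]
where $\widehat R$ is a bounded enlargement of $R$, for instance a union of neighbours, contained in a triple cube of an ancestor. The number of cubes needed scales like $\gamma^d$, and each carries a doubling factor. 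This is why $C'$ depends on $\gamma$ and $C_{RH}$ only through $\gamma^dC_{RH}$, together with $d$ and $C_\mu$.

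\textbf{Step 2 (Calderón--Zygmund and level sets).} Fix a top cube $Q$ and a level $\lambda$ above a multiple of $\bigl(\avint_{3Q} w^p\bigr)^{1/p}$. I run a stopping-time decomposition of $Q$ for $w^p$ at height $\lambda^p$, and extract maximal cubes $R_j$ with $\avint_{R_j}w^p\approx\lambda^p$ (comparable by doubling). Applying Step 1 to each $R_j$ gives $\lambda\lesssim\avint_{\widehat R_j}w$. I split $\int_{\widehat R_j}w$ at height $\eta\lambda$ with $\eta$ small, and use the bounded overlap of the $\widehat R_j$ (Whitney/neighbour structure). This yields the Gehring inequality
\[
\int_{\{w>\lambda\}\cap Q}w^p\,d\mu\;\le\;C\lambda^{p-1}\int_{\{w>\eta\lambda\}\cap 3Q}w\,d\mu .
\]

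\textbf{Step 3 (integration and main obstacle).} Multiplying by $\lambda^{\ve-1}$ and integrating in $\lambda$ gives the standard bootstrap: $\int w^{p+\ve}$ is bounded by $\frac{C\ve}{p-1+\ve}\eta^{-(p-1+\ve)}\int w^{p+\ve}$ plus the low-level term. The difficulty is that the right-hand side lives on $3Q$ while the left lives on $Q$, so the bad term cannot simply be absorbed. I would handle this with a weighted family of nested regions: take Whitney-type cubes at distance-dependent scale and weights like $\dist(\cdot,X\setminus V)^{N}$, or an iteration lemma over shells $Q\subset Q'\subset 3Q$. To make the absorbed term finite a priori, I first truncate $w$ to $\min(w,M)$, which still satisfies the level-set inequality, and let $M\to\infty$ at the end. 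The choice $\ve<\ve_0$ with $\ve_0$ small in terms of $C,\eta$ (hence of $d,C_\mu,p,\gamma^dC_{RH}$) makes the absorption work. This absorption across enlarged regions is the step I expect to be the main obstacle. It is exactly what the bootstrapping lemma with $p>1$ is meant to encapsulate, so I would invoke it once the level-set estimate of Step 2 is established on every cube of $\DD\cup3\DD$ inside the Whitney region.
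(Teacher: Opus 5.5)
There is a genuine gap, and it is the step you flag yourself. Your Step~2 inequality has $Q$ on the left and $3Q$ on the right. It only holds above a threshold $\lambda_0(Q)\approx\bigl(\avint_{3Q}w^p\,d\mu\bigr)^{1/p}$ that depends on the cube and degenerates as $Q$ approaches $X\setminus V$. Reducing to small cubes near $K$ does not help, because each of those cubes faces the same $Q$-versus-$3Q$ mismatch. Lemma~\ref{lemLevelToQ} cannot absorb it for you: it contains no geometry. Its hypothesis \rf{eqBasicLevels} reads $\int_{A^j}f^p\,d\mu\le C_1\alpha_{j+1}^{p-1}\int_{A^j}f\,d\mu$, where $A^j=\{f>\rho^j\alpha_0\}$ are the level sets of \emph{one} function on \emph{one} set $V$, starting from a \emph{single} level $\alpha_0$. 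The mismatch has to be removed before the lemma is invoked. Your phrase ``a weighted family of nested regions \dots\ or an iteration lemma over shells'' lists two options but carries out neither.

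The missing idea is the weight $\phi=\sum_{Q\in\W}\chi_Q\,\mu(Q)$, with $\W$ a Whitney decomposition of $V$. The paper runs the Calder\'on--Zygmund iteration of $(\phi w)^p$ inside each Whitney cube at the common levels $\alpha_j^p$, where $\alpha_0=\mu(V)\bigl(\avint_V w^p\,d\mu\bigr)^{1/p}$. This works for the following reasons:
\begin{itemize}
\item Since $\avint_Q(\phi w)^p\,d\mu=\mu(Q)^{p-1}\int_Q w^p\,d\mu\le\alpha_0^p$ for every $Q\in\W$, one starting level serves all Whitney cubes, and every stopping cube $P$ is a strict subcube of its Whitney cube $Q$.
\item Hence every Whitney cube meeting $3P$ has measure comparable to $\mu(Q)$ (Lemma~\ref{lemNeighborWhitney}), so $\phi$ is essentially constant on $3P$.
\item The weak reverse H\"older inequality on $P$ then gives $\alpha_j\,\mu(3P)\lesssim\int_{3P\cap A^{j-1}}\phi w\,d\mu$, with $A^{j-1}$ the \emph{global} level set of $\phi w$ in $V$.
\item The spill-over into $3P\setminus P$ therefore lands in the same set, and \rf{eqBasicLevels} holds on $V$ for $f=\phi w$.
\end{itemize}
The localization is paid for only at the end, through the lower bound for $\inf_K\phi$ in Lemma~\ref{lemInfPhi}. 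Your suggested weight $\dist(\cdot,X\setminus V)^N$ points in this direction, but you do not identify the uniform-start property that the weight must achieve.

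Three smaller points.
\begin{itemize}
\item The triples of disjoint maximal stopping cubes need not have bounded overlap: cubes of geometrically decreasing size accumulating at a point can all be maximal, and their triples then all contain points near it. This is why the paper passes to the Besicovitch subfamily of Lemma~\ref{lemBesicovitchSelection} and uses \rf{eqEnlargedBallDoubling}.
\item Your claim that $\min(w,M)$ still satisfies the level-set inequality is not justified, since truncation lowers its right-hand side. The a priori finiteness comes instead from truncating the level sets, as in Step~1 of the proof of Lemma~\ref{lemLevelToQ}.
\item In your Step~1, cover $R$ by $\lesssim\gamma^d$ balls of radius $\approx\ell(R)/\gamma$ with $\gamma\Delta_i\subset3R$, as the paper does. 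Enlarging a single ball $\Delta_R\supset R$ by $\gamma$ produces a region of size $\gamma\,\ell(R)$. That puts $\gamma$ into the geometry of the later steps, not only into $\gamma^dC_{RH}$.
\end{itemize}
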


Globally, and without the localization, Theorem \ref{theoCintro} is
\cite[Prop.~6.2]{AndersonHytonenTapiola}; the strong dyadic counterpart is
\cite[Thm.~3.1]{AndersonWeirich}, and for metric measure spaces with an
annular decay property the dilation can be removed altogether, by
\cite[Cor.~3.2]{Maasalo}. Theorem \ref{theoCintro} is not a formal
consequence of \cite[Prop.~6.2]{AndersonHytonenTapiola}. One can deduce a
version of it by extending $w$ to a globally defined weak $RH_p$ weight,
constant outside a slightly smaller set, and applying the global result; but
the constant of that extension carries a factor
$\left(\diam V/\dist(K,X\setminus V)\right)^{\log_2C_\mu/p}$, and the range of
exponents in \cite[Prop.~6.2]{AndersonHytonenTapiola} depends on the constant
of the weight, so the admissible $\ve$ would degenerate as $K$ approaches the
relative boundary of $V$. In Theorem \ref{theoCintro} it does not: $\ve_0$
depends on $d$, $C_\mu$, $p$ and $\gamma^{d}C_{RH}$ alone, and all the
geometry is carried by the constant $C$.

Indeed, both are corollaries of one lemma, which we isolate because it may be
of use elsewhere.

\begin{lemmaletter}\label{lemDintro}
Let $\mu$ be a measure, let $V$ be measurable, let $p\geq1$ and let $f\geq0$
belong to $L^p(\mu|_V)$. Put $\sigma\coloneqq0$ if $p=1$ and $\sigma\coloneqq1$
if $p>1$, let $\rho>1$, $\alpha_j\coloneqq\rho^j\alpha_0$ and
$A^j\coloneqq\{x\in V:\, f(x)>\alpha_j\}$. If
$$\int_{A^{j}}f^p\, d\mu\leq C_1\,\alpha_{j+1}^{\,p-\sigma}
\int_{A^{j}}f^{\,\sigma}\, d\mu\quad\text{for every }j\geq0,
\qquad\text{with }\ \rho^{\,p-\sigma}>2C_1>2,$$
then there is $q_0>p$, given explicitly in terms of $C_1$, $\rho$ and $p$,
such that
$$\int_{V}f^q\, d\mu\leq C\,\alpha_0^{q-p}\int_{V}f^p\, d\mu
\qquad\text{for every }p<q\leq q_0 .$$
\end{lemmaletter}

Here $f^0\equiv1$, so the hypothesis reads
$\int_{A^j}f\, d\mu\leq C_1\alpha_{j+1}\mu(A^j)$ when $p=1$ and
$\int_{A^j}f^p\, d\mu\leq C_1\alpha_{j+1}^{p-1}\int_{A^j}f\, d\mu$ when $p>1$.
Nothing about the underlying space is used in the proof --- no metric, no
covering argument, no finiteness assumption on $\mu$ --- and the two cases
are a single computation producing a single constant; see Lemma
\ref{lemLevelToQ} and Remark \ref{remRangeOfQ}.

The proof of Theorem \ref{theoAintro} follows the architecture of
\cite[Lemma 3.1]{BennewitzLewis}, and the one point where it departs from it
is worth isolating, because it is what buys the generality.

Bennewitz and Lewis run their argument not on the density $g$ but on the
auxiliary function
$$f(z)=\left(\frac{\delta(z)}{2\varrho}\right)^{d-1}g(z),
\qquad \delta(z)=2\varrho-|y-z|,$$
where $\delta(z)$ is the distance from $z$ to the boundary of the reference
ball $B(y,2\varrho)$. The weight $\delta(z)^{d-1}$ is available to them only
because $\partial G$ is Ahlfors $(d-1)$-regular: it is really the measure of a
ball of radius comparable to the distance to the boundary. When
$\mu$ is merely doubling there is no exponent to which to raise the distance
--- but a Whitney decomposition supplies the ball directly. We use
$$\phi(x):=\sum_{Q\in\W}\chi_Q(x)\,\mu(Q),$$
where $\W$ is a Whitney decomposition of $V$ adapted to the dyadic grid, so
that $\mu(Q)\approx\mu\big(\Delta_{\dist(x,X\setminus V)}(x)\big)$ for
$x\in Q$. 

The rest of the argument is then a Calder\'on--Zygmund iteration on $\phi w$
over each Whitney cube, producing an estimate for the level sets of $\phi w$,
followed by Lemma \ref{lemDintro}. Taking $p=1$ gives Theorem
\ref{theoAintro}; taking $p>1$ gives Theorem \ref{theoCintro}.

A dyadic grid on a closed set $X$ has no canonical notion of a triple cube,
and the classes considered here all involve a dilation, so one is needed. The
standard device is to work with several adjacent dyadic systems in the sense
of Hyt\"onen and Kairema \cite{HytonenKairema}, as
\cite[\S4]{AndersonHytonenTapiola} do. We take a different route: we call two
cubes of the same generation neighbours when suitable dilations of their
enveloping balls meet, define $3Q$ as the union of the neighbours of $Q$, and
work throughout with the basis $\DD\cup3\DD$. Section \ref{secGrids} develops
the properties of this basis that the later sections need; see Remark \ref{remAdjacentSystems}.

\subsection*{Organization}

Section \ref{secGrids} sets up the dyadic grid, the neighbour relation and the
triple cube, and constructs the Whitney decomposition; no measure enters there
beyond the requirement that $X$ be closed. Section \ref{secCZ} introduces the
local doubling condition we work with, proves the dyadic differentiation
theorem --- which in this setting needs no hypothesis --- and the
Calder\'on--Zygmund iteration on $\DD\cup3\DD$. Sections
\ref{secClasses}--\ref{secBalls} contain the classes themselves: their
definitions and the elementary implications between them, summarized in Figure
\ref{figImplications}; the equivalences valid on $\DD\cup3\DD$ when $\mu$ is
doubling; and the transfer of those equivalences to restricted balls. Section
\ref{secWeak} defines the three weak classes and proves the two implications
between them that are immediate. Section \ref{secBL} proves Theorem
\ref{theoAintro} and Corollary \ref{coroBintro}; Section \ref{secBootstrap}
isolates Lemma \ref{lemDintro}, which is a statement of real-variable theory
with no geometry in it; and Section \ref{secGehring} proves Theorem
\ref{theoCintro}. Section \ref{secOpen} collects the questions we leave open.

\section{Dyadic grids, triple cubes and a Whitney decomposition}
\label{secGrids}

\subsection{Dyadic grids adapted to a closed set}

Throughout the paper $X\subset\R^d$ is a closed set. By a \emph{dyadic grid}
associated to $X$ with constants $1<\rho_0,A_0<\infty$ we mean a family
$\DD=\bigcup_{k\in\Z}\DD_k$ of Borel subsets of $X$ with the following
properties.
\begin{enumerate}[D1.]
	\item \label{itemD1} \emph{Completeness}: $X=\bigcup_{Q\in\DD_k}Q$ for
	every $k\in\Z$.
	\item \label{itemD2} \emph{Nesting}: if $Q_j\in\DD_{k_j}$ for $j=0,1$ with
	$k_0\leq k_1$, then either $Q_1\subset Q_0$ or $Q_1\cap Q_0=\varnothing$.
	\item \label{itemD3} \emph{Tree structure}: every $Q\in\DD_{k}$ is
	contained in a unique $\widehat Q\in\DD_{k-1}$, its \emph{parent}, and the
	number of children of a cube is at least one and at most a dimensional
	constant.
	\item \label{itemD4} \emph{Scaling}: for every $Q\in\DD_k$ there is a point
	$z_Q\in Q$, the \emph{centre} of $Q$, such that
	$$\Delta_Q\subset Q\subset B_Q\subset B_{\widehat Q},
	\qquad\text{where}\quad
	\Delta_Q\coloneqq \Delta_{\frac12\rho_0^{-k}}(z_Q),
	\quad B_Q\coloneqq B_{\frac{A_0}2\rho_0^{-k}}(z_Q).$$
\end{enumerate}
We call $k$ the \emph{generation} of $Q\in\DD_k$ and write
$$\ell(Q)\coloneqq \rho_0^{-k},\qquad r_Q\coloneqq \tfrac{A_0}2\rho_0^{-k}
=\tfrac{A_0}2\ell(Q)$$
for its \emph{side length} and its \emph{exterior radius}, so that $B_Q$ is
the ball of radius $r_Q$ centred at $z_Q$ and $\Delta_Q$ is the restricted
ball of radius $A_0^{-1}r_Q$ centred at the same point. Given
$Q_0\in\DD$ we write $\DD(Q_0)\coloneqq \{Q\in\DD:\, Q\subset Q_0\}$, and
$\mathcal{P}(Q)\coloneqq \widehat Q$ for the parent map.

Such grids exist on every closed $X\subset\R^d$, with $\rho_0$ and $A_0$
universal; this is the construction of Christ \cite{Christ} and David \cite{DavidCubes}, in
the form given by Hyt\"onen and Kairema \cite{HytonenKairema} for doubling
metric spaces. The universality of the constants can be obtained as in \cite{DavidMattila}; it is explained in detail in \cite[Appendix]{PratsTolsaLlibre}. Nothing below uses any property of $\DD$ beyond
D\ref{itemD1}--D\ref{itemD4}.

\begin{remark}\label{remSkipping}
Passing from $\DD$ to the subfamily $\bigcup_{k\in j\Z}\DD_{k}$ of every
$j$-th generation again yields a grid, with $\rho_0$ replaced by $\rho_0^{j}$
and $A_0$ unchanged. We refer to this as \emph{skipping generations}. It lets
us assume that $\rho_0$ is as large as we please; we shall use it once, to
secure $\rho_0\geq 6$ in Remark \ref{rem3QNeighbors}.
\end{remark}

\subsection{Neighbours and triple cubes}

The weak classes of weights we are concerned with all compare an average over a set
with an average over a dilate of it. A dyadic grid on a closed set carries no
canonical dilation, and we now supply one.

\begin{definition}\label{defNeighbors}
Let $\lambda\geq1$. Two cubes of the same generation $Q,R\in\DD_k$ are
\emph{neighbours} if $\lambda B_Q\cap\lambda B_R\neq\varnothing$, and we write
$R\in\NN(Q)$. The \emph{triple cube} of $Q$ is
$$3Q\coloneqq \bigcup_{R\in\NN(Q)}R,$$
and we write $3Q\in3\DD$. Since $Q\in\NN(Q)$ and every $R\in\NN(Q)$ satisfies
$R\subset B_R\subset B(z_Q,\lambda r_Q+2r_R)$, we have
\begin{equation}\label{eqTrivialInclusionDQP}
	\lambda B_Q\cap X\subset 3Q\subset 3\lambda B_Q .
\end{equation}
\end{definition}

For the standard dyadic grid of $\R^d$ the definition is of course replaced by
its natural counterpart: $3Q$ is the concentric cube with triple side length,
and the neighbours of $Q$ are the cubes of its generation sharing part of its
boundary.

\begin{remark}\label{rem3QNeighbors}
The neighbouring relation satisfies
\begin{enumerate}[N1.]
	\item \label{eqrem3QNeighbors0} $P\in\NN(Q)\implies Q\subset 3P$.
\end{enumerate}
Moreover, we may fix $\lambda$ depending on $\rho_0$ so that, skipping
generations if necessary, the following also hold:
\begin{enumerate}[N1.]
	\setcounter{enumi}{1}
	\item \label{eqrem3QNeighbors2} $\left(\frac4{A_0}+1\right)B_Q\cap X\subset 3Q;$
	\item \label{eqrem3QNeighbors1} $\widehat Q\subset 3Q\subset 3\widehat Q;$
	\item \label{eqrem3QNeighbors3} $\lambda B_Q\subset\lambda B_{\widehat Q};$
	\item \label{eqrem3QNeighbors4} if $Q,P\in\DD$ with $Q\cap P=\varnothing$
	and $z_Q\in3\lambda B_P$, then $3\lambda B_Q\subset 10\lambda^2A_0B_P;$ and
	\item \label{eqrem3QNeighbors5} if $P,R\in\DD_k$ with $R\in\NN(P)$, and
	$R\subset R'\in\DD$ with $r_{R'}\geq\rho_0 r_P$, then $3P\subset 3R'$.
\end{enumerate}
From now on $A_0$, $\rho_0$ and $\lambda$ are dimensional constants, and
dependence on them is not displayed.
\end{remark}

\begin{proof}
N\ref{eqrem3QNeighbors0} is immediate from the symmetry of Definition
\ref{defNeighbors}.

By Remark \ref{remSkipping} we may assume $\rho_0\geq 6$. Set
\begin{equation}\label{defLambda}
	\lambda\coloneqq 2\max\left\{(1-3\rho_0^{-1})^{-1},\,\rho_0\right\}\geq 6 .
\end{equation}
Property N\ref{eqrem3QNeighbors2} follows at once from
\rf{eqTrivialInclusionDQP}.

Let us prove N\ref{eqrem3QNeighbors1}. We first check that
$\lambda\geq(1-3\rho_0^{-1})^{-1}$ implies $3Q\subset3\widehat Q$. Let
$x\in3Q\cap R$ with $R,\widehat Q\in\DD_{k}$. To see that $R$ and $\widehat Q$
are neighbours it suffices to show that $x\in\lambda B_{\widehat Q}$, and
indeed
$$|x-z_{\widehat Q}|\leq|x-z_Q|+|z_Q-z_{\widehat Q}|
\by{\rf{eqTrivialInclusionDQP}}<3\lambda r_Q+r_{\widehat Q}
=(3\lambda\rho_0^{-1}+1)r_{\widehat Q}
\by{\rf{defLambda}}\leq\lambda r_{\widehat Q}.$$
For the inclusion $\widehat Q\subset3Q$, assume $\lambda>\rho_0$ and let
$R\in\DD_k$ with $\widehat R=\widehat Q$. Then
$$|z_Q-z_R|\leq\diam(B_{\widehat Q})=2\rho_0 r_Q=2\rho_0 r_R ,$$
so $\lambda B_Q\cap\lambda B_R\neq\varnothing$; note that the intersection
point need not belong to $X$.

The remaining properties are proved by the same techniques and are of similar difficulty, so we leave them to the reader.
%
%
\end{proof}

\begin{remark}\label{remAdjacentSystems}
The usual way of supplying a dilation on a closed set is different, and it is
worth recording what each device costs. Hyt\"onen and Kairema
\cite{HytonenKairema} produce finitely many \emph{adjacent} dyadic systems
$\DD^1,\dots,\DD^K$ with the property that every ball $B$ is contained in some
cube of some $\DD^t$ of comparable size; a condition on balls then becomes a
condition on ``some cube of some system''. This is the route taken in
\cite[\S4]{AndersonHytonenTapiola}, where the enlargement of a cube $Q$ is a
\emph{generalized dyadic parent}: a cube $Q^{*}$ of a coarser generation, with
$\ell(Q^{*})=\delta^{-2}\ell(Q)$, containing every cube of the generation of
$Q$ that meets $Q$. Since such a $Q^{*}$ is not unique, the resulting classes
are defined by a supremum over $Q$ and an infimum over the admissible $Q^{*}$.

Working with $\DD\cup3\DD$ instead has three consequences. First, everything
takes place in a single grid, and $3Q$ is determined by $Q$, so no infimum
over choices of enlargement enters the definitions. Second, the ratio between the diameters of
$3Q$ and $Q$ is governed by $\lambda$, which \rf{defLambda} ties to $\rho_0$,
whereas a generalized dyadic parent is always two generations coarser. Third,
and this is what we actually need below, $3Q$ is a \emph{union of cubes of the
generation of $Q$}. The proof in \S\ref{secBL} uses exactly this:
the stopping condition of the Calder\'on--Zygmund decomposition is read off on
each $R\in\NN(P)$ separately, and the estimate
$\nu(3P)\leq\nu(P)\bigl(1+C\mu(3P)/\mu(P)\bigr)$ is obtained by summing
$\mu(R)$ over $R\in\NN(P)$. A single coarser cube does not decompose in this
way. The price is that $3Q$ is not itself a dyadic cube, so the family $3\DD$
must be carried alongside $\DD$ as a second layer of the basis, and its
localizations $3\DD(Q_0)=\{3Q\cap Q_0:\, Q\in\DD(Q_0)\}$ appear throughout.
Both features survive the localization, and this is what makes the second
layer usable: by D\ref{itemD2} the set $3Q\cap Q_0$ is again a union of cubes
of the generation of $Q$ whenever $Q\subset Q_0$, and by N\ref{eqrem3QNeighbors1} it still contains
the parent $\widehat Q$, as soon as $\ell(Q)<\ell(Q_0)$.
\end{remark}

We close the subsection with a selection lemma that will be used twice, in
\S\ref{secBL} and in \S\ref{secGehring}, to replace a disjoint family of cubes
by a subfamily whose triples still cover the same set and now overlap boundedly.
It is the one point where the Besicovitch covering theorem is used, and it is
available because we are in $\R^d$.

\begin{lemma}\label{lemBesicovitchSelection}
Let $\mathcal F\subset\DD$ be a family of pairwise disjoint cubes contained in
a bounded set. Then there is a subfamily $\mathcal G\subset\mathcal F$ with
$$\bigcup_{P\in\mathcal F}3P\subset\bigcup_{P\in\mathcal G}10\lambda^2A_0B_P
\qquad\text{and}\qquad\sum_{P\in\mathcal G}\chi_{3P}\leq C_{\mathrm B} ,$$
where $C_{\mathrm B}$ depends only on $d$.
\end{lemma}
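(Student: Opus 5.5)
We apply the Besicovitch covering theorem to the balls $3\lambda B_P$, $P\in\mathcal F$, and then transfer the conclusions to the triples $3P$. The family $\{3\lambda B_P: P\in\mathcal F\}$ consists of balls centred at the points $z_P$ with radii $3\lambda r_P$. These radii are bounded, because the cubes lie in a bounded set, so the number of generations that can occur is bounded from above. The set of centres $Z=\{z_P:P\in\mathcal F\}$ is bounded. Besicovitch's theorem in $\R^d$ therefore gives a subfamily $\mathcal G\subset\mathcal F$ such that $Z\subset\bigcup_{P\in\mathcal G}3\lambda B_P$ and $\sum_{P\in\mathcal G}\chi_{3\lambda B_P}\leq C_{\mathrm B}$, with $C_{\mathrm B}=C(d)$. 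By \rf{eqTrivialInclusionDQP} we have $3P\subset3\lambda B_P$, so the bounded overlap passes directly to the triples: $\sum_{P\in\mathcal G}\chi_{3P}\leq\sum_{P\in\mathcal G}\chi_{3\lambda B_P}\leq C_{\mathrm B}$.

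For the covering, fix $P\in\mathcal F$. If $P\in\mathcal G$, then $3P\subset3\lambda B_P\subset10\lambda^2A_0B_P$, since $3\lambda\leq10\lambda^2A_0$. Otherwise pick $Q\in\mathcal G$ with $z_P\in3\lambda B_Q$. The cubes $P$ and $Q$ are distinct members of $\mathcal F$, so they are disjoint. Property N\ref{eqrem3QNeighbors4}, applied with the roles arranged so that the centre of $P$ lies in $3\lambda B_Q$, gives $3\lambda B_P\subset10\lambda^2A_0B_Q$. Hence $3P\subset10\lambda^2A_0B_Q$, and taking the union over $P\in\mathcal F$ yields the first claimed inclusion.

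The main obstacle is the use of N\ref{eqrem3QNeighbors4}, whose proof was left to the reader. If we want to check it directly, the key point is that a large cube $P$ cannot have its centre inside $3\lambda B_Q$ for a much smaller $Q$ disjoint from $P$. Indeed, $\Delta_P\subset P$ is a restricted ball of radius $\tfrac12\ell(P)$ about $z_P$ that avoids $Q\ni z_Q$. This gives $|z_P-z_Q|\geq\tfrac12\ell(P)$, and combined with $|z_P-z_Q|<3\lambda r_Q$ it yields $\ell(P)\leq3\lambda A_0\ell(Q)$. Then $3\lambda B_P\subset B\bigl(z_Q,3\lambda r_Q+3\lambda r_P\bigr)$, and $3\lambda r_P\leq\tfrac92\lambda^2A_0\,r_Q$. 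So the radius is at most about $(3\lambda+\tfrac92\lambda^2A_0)r_Q\leq10\lambda^2A_0r_Q$, using $\lambda\geq6$. This is exactly where disjointness is needed.

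A minor technical point is that Besicovitch's theorem requires the centres to be covered by the balls, not arbitrary points. This is satisfied because each $z_P$ is the centre of its own ball $3\lambda B_P$.
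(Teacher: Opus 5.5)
Your proof is correct and matches the paper's argument: Besicovitch applied to $\{3\lambda B_P\}_{P\in\mathcal F}$ over the bounded set of centres, bounded overlap passed to the triples through $3P\subset3\lambda B_P$, and N\ref{eqrem3QNeighbors4} (for distinct, hence disjoint, cubes) to get the covering. Your optional check of N\ref{eqrem3QNeighbors4} has a factor-$2$ slip: $\ell(P)<6\lambda r_Q$ gives $3\lambda r_P<9\lambda^2A_0r_Q$, not $\frac92\lambda^2A_0r_Q$, but $3\lambda+9\lambda^2A_0\leq10\lambda^2A_0$ still holds because $\lambda A_0\geq3$, so the conclusion stands.
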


\begin{proof}
The centres $\{z_P\}_{P\in\mathcal F}$ form a bounded set, so the Besicovitch
covering theorem applied to the balls $\{3\lambda B_P\}_{P\in\mathcal F}$
yields $\mathcal G\subset\mathcal F$ with
$$\{z_P\}_{P\in\mathcal F}\subset\bigcup_{P\in\mathcal G}3\lambda B_P
\qquad\text{and}\qquad\sum_{P\in\mathcal G}\chi_{3\lambda B_P}\leq C_{\mathrm B} .$$
Given $P\in\mathcal F$, pick $P'\in\mathcal G$ with $z_P\in3\lambda B_{P'}$.
If $P'=P$ then $3P\subset3\lambda B_P\subset 10\lambda^2A_0B_P$ by
\rf{eqTrivialInclusionDQP}; and if $P'\neq P$ the two cubes are disjoint, so
N\ref{eqrem3QNeighbors4} gives
$3P\subset3\lambda B_P\subset10\lambda^2A_0B_{P'}$. The second assertion
follows from $3P\subset3\lambda B_P$.
\end{proof}

\subsection{A Whitney decomposition}

We now decompose a relatively open subset of $X$ into dyadic cubes whose sizes
are comparable to their distances to the complement. No doubling assumption is
made on any measure here; the construction is purely metric.

\begin{lemma}[Whitney decomposition]\label{lemWhitney}
Let $X\subset\R^d$ be closed, let $\DD$ be a dyadic grid associated to $X$
with $\rho_0$, $A_0$ and $\lambda\geq6$ as in Remark \ref{rem3QNeighbors}, and
let $\Omega\subset X$ be relatively open with $F\coloneqq X\setminus\Omega$
non-empty. Then there exist $\alpha=\alpha(\rho_0)>7$ and a collection
$\W\subset\DD$, a \emph{Whitney decomposition} of $\Omega$, such that:
\begin{enumerate}[Wh1.]
	\item \label{itemWhFull} $\Omega=\bigcup_{Q\in\W}Q$.
	\item \label{itemWhDisjoint} The cubes of $\W$ are pairwise disjoint, i.e.
	$\sum_{Q\in\W}\chi_Q=\chi_\Omega$.
	\item \label{itemWhRadius} $(\alpha\lambda-1)r_Q\leq\dist(Q,F)\leq
	\rho_0(\alpha\lambda+1)r_Q$ for every $Q\in\W$, and
	$3\widehat Q\subset \Omega$.
	\item \label{itemWhComparable} If $Q,R\in\W$ satisfy
	$3Q\cap3R\neq\varnothing$, then
	$\rho_0^{-1}\leq\frac{\ell(Q)}{\ell(R)}\leq\rho_0$.
	\item \label{itemWhOverlapping} The triple cubes have bounded overlap:
	$\sum_{Q\in\W}\chi_{3Q}\leq C_\W$, with $C_\W^{1/d}$ depending only on
	$\rho_0$, $A_0$ and $\lambda$.
\end{enumerate}
\end{lemma}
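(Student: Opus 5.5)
We take $\W$ to be the maximal cubes of the family
$$\mathcal S\coloneqq\{Q\in\DD:\ \dist(Q,F)\geq(\alpha\lambda-1)r_Q\},$$
with $\alpha$ to be fixed later depending only on $\rho_0$. Since $F\neq\varnothing$ and $\dist(Q,F)\leq\diam$ of a bounded neighbourhood, the cubes of $\mathcal S$ have bounded generation above. More precisely, along any chain $Q\subset\widehat Q\subset\cdots$ through a point $x\in\Omega$, the quantity $\dist(\cdot,F)$ is bounded by $\dist(x,F)$ while $r$ grows geometrically. So every chain eventually leaves $\mathcal S$, and maximal elements exist.

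Every $x\in\Omega$ lies in some cube of $\mathcal S$. Indeed, for generation $k$ large we have $\dist(Q,F)\geq\dist(x,F)-2r_Q\geq(\alpha\lambda-1)r_Q$, because $Q\subset B_Q$ and $\dist(x,F)>0$ since $F$ is closed in $X$. This gives Wh\ref{itemWhFull}. Maximal cubes are pairwise disjoint by the nesting property D\ref{itemD2}, which gives Wh\ref{itemWhDisjoint}. Note also that cubes of $\mathcal S$ lie in $\Omega$ since their distance to $F$ is positive.

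For Wh\ref{itemWhRadius}, the lower bound holds by definition. For the upper bound we use maximality: $\widehat Q\notin\mathcal S$, so $\dist(Q,F)\leq\dist(\widehat Q,F)+\diam\widehat Q<(\alpha\lambda-1)r_{\widehat Q}+2r_{\widehat Q}=\rho_0(\alpha\lambda+1)r_Q$. The inclusion $3\widehat Q\subset\Omega$ comes from \rf{eqTrivialInclusionDQP}: $3\widehat Q\subset3\lambda B_{\widehat Q}=B(z_Q',3\lambda\rho_0r_Q)$ with a centre within $r_{\widehat Q}$ of $Q$. Thus every point of $3\widehat Q$ is within $(3\lambda+2)\rho_0 r_Q$ of $Q$, and this is smaller than $(\alpha\lambda-1)r_Q$ provided $\alpha\geq 4\rho_0$, say. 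This fixes $\alpha=\alpha(\rho_0)>7$.

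The main step is Wh\ref{itemWhComparable}. Suppose $3Q\cap3R\neq\varnothing$ with $\ell(R)\leq\rho_0^{-2}\ell(Q)$. Then $\dist(R,Q)\leq 6\lambda r_Q$ by \rf{eqTrivialInclusionDQP}, so
$$\dist(R,F)\leq\dist(Q,F)+\diam Q+6\lambda r_Q\leq(\rho_0(\alpha\lambda+1)+6\lambda+2)r_Q .$$
On the other hand, the ancestor $R'$ of $R$ with $\ell(R')=\rho_0^{-1}\ell(Q)$ satisfies $\dist(R',F)\geq\dist(R,F)-2r_{R'}$. We want this to be at least $(\alpha\lambda-1)r_{R'}$, contradicting maximality of $R$. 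That requires the lower bound on $\dist(R,F)$ coming from $R$'s own Whitney property to beat the $r_{R'}$ scale. Here I would rather argue the other way: use the lower bound $\dist(Q,F)\geq(\alpha\lambda-1)r_Q$ and transport it to $R'$, obtaining
$$\dist(R',F)\geq(\alpha\lambda-1)r_Q-6\lambda r_Q-2r_Q-2r_{R'}.$$
This is at least $(\alpha\lambda-1)\rho_0^{-1}r_Q$ as soon as $\alpha\lambda(1-\rho_0^{-1})\geq 6\lambda+4$, which holds for $\alpha\geq 4\rho_0$ since $\rho_0\geq6$. So $R'\in\mathcal S$ strictly contains $R$, a contradiction; by symmetry this gives Wh\ref{itemWhComparable}. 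Getting these constants to close simultaneously with Wh\ref{itemWhRadius} is where I expect the only real bookkeeping.

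Finally, Wh\ref{itemWhOverlapping}: fix $x$. Every $Q\in\W$ with $x\in3Q$ has $x\in3\lambda B_Q$, and by Wh\ref{itemWhRadius} $r_Q\approx\dist(x,F)$ up to factors depending on $\rho_0,\lambda,\alpha$. Hence only boundedly many generations occur. For each generation the cubes are disjoint and contain the balls $\Delta_Q$ with centres $z_Q$ in $B(x,3\lambda r_Q)$, and these centres are $\rho_0^{-k}$-separated. Therefore a volume count in $\R^d$ gives at most $C(\lambda A_0)^d$ of them, which yields $C_\W^{1/d}$ depending only on $\rho_0,A_0,\lambda$.
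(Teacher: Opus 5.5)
Your proof is correct and follows the paper's route: maximal cubes of a stopping condition comparing $r_Q$ with the distance to $F$, triangle-inequality checks of Wh3--Wh4, and a Lebesgue volume count for Wh5. The differences are cosmetic. You stop on $\dist(Q,F)$ instead of $\dist(z_Q,F)$, which makes the condition monotone along chains and the lower bound in Wh3 true by definition; and for Wh4 you exhibit an ancestor of $R$ in $\mathcal S$ rather than using the upper bound from maximality. There are two small slips, neither affecting the argument: cubes of $\mathcal S$ have bounded generation only along chains through a fixed point (your ``more precisely'' sentence is the correct statement), and distinct cubes of $\DD_k$ only give $\frac12\rho_0^{-k}$-separated centres, since all one knows is $z_{Q'}\notin\Delta_Q$.
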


\begin{proof}
Let $\W$ be the family of maximal dyadic cubes satisfying
\begin{equation}\label{eqStoppingTimeWhitney}
	\alpha\lambda r_Q\leq\dist(z_Q,F).
\end{equation}
Property Wh\ref{itemWhFull} follows from D\ref{itemD1} and D\ref{itemD4},
since for $x\in\Omega$ the cubes $Q_k(x)\in\DD_k$ containing $x$ satisfy
$r_{Q_k(x)}=\frac{A_0}2\rho_0^{-k}\to0$ while
$\dist(z_{Q_k(x)},F)\to\dist(x,F)>0$ as $k\to\infty$. Property
Wh\ref{itemWhDisjoint} holds by construction and D\ref{itemD2}.

The lower bound in Wh\ref{itemWhRadius} follows from
\rf{eqStoppingTimeWhitney} and the triangle inequality. For the upper bound,
maximality means that $\widehat Q$ fails \rf{eqStoppingTimeWhitney}, so
\begin{equation}\label{eqDistCQAbove}
	\dist(z_Q,F)\leq\dist(z_{\widehat Q},F)+r_{\widehat Q}
	\by{\rf{eqStoppingTimeWhitney}}<(\alpha\lambda+1)r_{\widehat Q}
	=(\alpha\lambda+1)\rho_0 r_Q ,
\end{equation}
and the first part of Wh\ref{itemWhRadius} follows since
$\dist(Q,F)\leq\dist(z_Q,F)$. For the last part, assume
$\alpha\geq3\rho_0+1$ and let $x\in3\widehat Q$. Since
$3\widehat Q\subset3\lambda B_{\widehat Q}$ by \rf{eqTrivialInclusionDQP} and
$z_Q\in Q\subset B_{\widehat Q}$,
$$\begin{aligned}
|x-z_Q|&\leq(3\lambda+1)r_{\widehat Q}=(3\lambda+1)\rho_0 r_Q
=\left(3\rho_0+\frac{\rho_0}\lambda\right)\lambda r_Q\\
&<\alpha\lambda r_Q\by{\rf{eqStoppingTimeWhitney}}\leq\dist(z_Q,F),
\end{aligned}$$
because $\lambda\geq2\rho_0$ by \rf{defLambda}; hence $x\notin F$.

For Wh\ref{itemWhComparable}, assume $3R\cap3Q\neq\varnothing$ with
$r_R\leq r_Q$, and pick $x\in B(z_Q,3\lambda r_Q)\cap B(z_R,3\lambda r_R)$,
which exists by \rf{eqTrivialInclusionDQP}. Then
$$\dist(z_R,F)\geq\dist(z_Q,F)-(3\lambda r_Q+3\lambda r_R)
\by{Wh\ref{itemWhRadius}}\geq(\alpha\lambda-1-6\lambda)r_Q
\geq(\alpha-7)\lambda r_Q .$$
Combining with \rf{eqDistCQAbove},
$$\frac{r_R}{r_Q}\geq\frac{(\alpha-7)\lambda}{\alpha\lambda+1}\rho_0^{-1}
\geq\frac{\alpha-7}{\alpha+1}\rho_0^{-1}>\rho_0^{-2}$$
provided $\alpha$ is large enough. Since $R$ and $Q$ are dyadic cubes with
$r_R\leq r_Q$, we have $\frac{\ell(R)}{\ell(Q)}=\frac{r_R}{r_Q}=\rho_0^{-j}$
for some $j\in\N$, and the estimate above forces $j\leq1$.

It remains to prove Wh\ref{itemWhOverlapping}. Let $x\in\Omega$ and let
$Q\in\W$ be a cube of maximal $r_Q$ with $x\in3Q$; such a cube exists by
Wh\ref{itemWhComparable}. Write
$D_P\coloneqq B\bigl(z_P,\tfrac12 r(\Delta_P)\bigr)$ for the Euclidean ball
with half the radius of the inner ball of $P$. For any $P\in\W$ with
$x\in3P$ we have, using \rf{eqTrivialInclusionDQP} and
Wh\ref{itemWhComparable},
$$D_P\subset B\bigl(z_Q,3\lambda r_Q+3\lambda r_P+r(\Delta_P)\bigr)
\subset B\bigl(z_Q,(6\lambda+A_0^{-1})r_Q\bigr),$$
and moreover
$$r(\Delta_P)=A_0^{-1}r_P\by{Wh\ref{itemWhComparable}}
\geq A_0^{-1}\rho_0^{-1}r_Q .$$
The balls $\{D_P\}_{P\in\DD_k}$ are pairwise disjoint: if $P,P'\in\DD_k$ are
distinct, then $z_{P'}\in P'\subset X\setminus P$, so $z_{P'}\notin\Delta_P$
and hence $|z_P-z_{P'}|\geq r(\Delta_P)=r(\Delta_{P'})$. Comparing Lebesgue
measures we obtain
$$\begin{aligned}
\#\left\{P\in\W:\ 3P\cap3Q\neq\varnothing,\ r_P=\rho_0^{-1}r_Q\right\}
&\leq\frac{\bigl|B(z_Q,(6\lambda+A_0^{-1})r_Q)\bigr|}
{|B_1(0)|\left(\tfrac12 r(\Delta_P)\right)^d}\\
&\leq\bigl(2\rho_0(6\lambda A_0+1)\bigr)^d ,
\end{aligned}$$
and the same argument with $r_P=r_Q$ gives the analogous bound. By
Wh\ref{itemWhComparable} these are the only two possibilities, so
Wh\ref{itemWhOverlapping} holds with
$C_\W\leq(12\lambda A_0+2)^d(1+\rho_0)^d$.
\end{proof}

\section{Doubling measures and Calder\'on--Zygmund decompositions}
\label{secCZ}

\subsection{Doubling and $\DD$-doubling measures}

\begin{definition}\label{defDoubling}
A Radon measure $\mu$ in $\R^d$ is \emph{doubling} with constant $C_\mu$ if
$$\mu(B_{2r}(x))\leq C_\mu\,\mu(B_r(x))
\qquad\text{for every }x\in\supp\mu\text{ and every }r>0 .$$
\end{definition}

From now on $\mathcal D$ is a dyadic grid associated to $X=\supp\mu$. Given
$Q_0\in\DD\cup3\DD$ we write
$$\DD(Q_0)\coloneqq \{Q\in\DD:\, Q\subset Q_0\}
\qquad\text{and}\qquad
3\DD(Q_0)\coloneqq \{3Q\cap Q_0:\, Q\in\DD(Q_0)\} .$$

The condition we actually use is weaker than doubling, and local.

\begin{definition}\label{defDoublingConstant}
Let $\mu$ be a Radon measure in $\R^d$ and let $Q_0\in\DD\cup3\DD$. We say
that $\mu$ is \emph{$\DD$-doubling in $Q_0$} with constant $C_\mu$ if
$$\sup_{R\in\NN(Q)\cap\DD(Q_0)}\frac{\mu(3Q\cap Q_0)}{\mu(R)}\leq C_\mu
\qquad\text{for every }Q\in\DD(Q_0).$$
\end{definition}

\begin{lemma}\label{lemDoublingNeighbors}
Let $\mu$ be a Radon measure in $\R^d$ and let $Q_0\in\DD\cup3\DD$. Then $\mu$
is $\DD$-doubling in $Q_0$ with constant $C_\mu$ if and only if
\begin{equation}\label{eqDoublingComplement}
	\mu(3Q\cap Q_0\setminus R)\leq\left(1-\frac1{C_\mu}\right)\mu(3Q\cap Q_0)
\end{equation}
for every $Q\in\DD(Q_0)$ and every $R\in\NN(Q)\cap\DD(Q_0)$.
In that case:
\begin{enumerate}[DN1.]
	\item \label{itemDN1} $\mu(3S)\leq C_\mu\mu(T)$ for every $S\in\DD$ with
	$3S\subset Q_0$ and every $T\in\NN(S)$;
	\item \label{itemDN2} $\mu(\widehat S)\leq C_\mu\mu(S)$ for every $S\in\DD$
	with $\widehat S\subset Q_0$;
	\item \label{itemDN3} $\mu$ is $\DD$-doubling in $Q_1$ for every
	$Q_1\in\DD(Q_0)\cup3\DD(Q_0)$.
\end{enumerate}
Moreover, if $\mu$ is doubling with constant $C_\mu$ in the sense of
Definition \ref{defDoubling}, then it is $\DD$-doubling in every
$Q_0\in\DD\cup3\DD$ with constant $C_\mu^{N_0}$, where
$N_0=\lceil\log_2(5\lambda A_0)\rceil$.
\end{lemma}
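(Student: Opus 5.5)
The equivalence is a complement computation. Fix $Q\in\DD(Q_0)$ and $R\in\NN(Q)\cap\DD(Q_0)$. Since $R\subset 3Q$ and $R\subset Q_0$, we have $R\subset 3Q\cap Q_0$, so
$$\mu(3Q\cap Q_0\setminus R)=\mu(3Q\cap Q_0)-\mu(R).$$
(If $\mu(3Q\cap Q_0)=\infty$ the relevant sets are bounded and $\mu$ is Radon, so this does not occur.) Hence \rf{eqDoublingComplement} is equivalent to $\mu(R)\geq C_\mu^{-1}\mu(3Q\cap Q_0)$, that is, to $\mu(3Q\cap Q_0)/\mu(R)\leq C_\mu$. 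Taking the supremum over $R$ gives Definition \ref{defDoublingConstant}. The case $\mu(R)=0$ has to be read with the convention that the quotient is $+\infty$ unless the numerator vanishes, and both formulations agree under that convention.

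The three consequences should be direct specializations.

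\textbf{DN1.} If $3S\subset Q_0$, then $S\subset Q_0$, so $S\in\DD(Q_0)$ and $3S\cap Q_0=3S$. Every $T\in\NN(S)$ lies in $3S\subset Q_0$, hence $T\in\NN(S)\cap\DD(Q_0)$. The definition then gives $\mu(3S)\leq C_\mu\mu(T)$.

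\textbf{DN2.} Since $S\subset\widehat S\subset Q_0$, we have $S\in\DD(Q_0)$. By N\ref{eqrem3QNeighbors1}, $\widehat S\subset 3S$, so $\widehat S\subset 3S\cap Q_0$. Taking $Q=R=S$ in the definition yields $\mu(\widehat S)\leq\mu(3S\cap Q_0)\leq C_\mu\mu(S)$.

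\textbf{DN3.} Let $Q_1\subset Q_0$ and $Q\in\DD(Q_1)\subset\DD(Q_0)$, and take $R\in\NN(Q)\cap\DD(Q_1)$. Then $R\in\NN(Q)\cap\DD(Q_0)$ and $3Q\cap Q_1\subset 3Q\cap Q_0$, so the quotient for $Q_1$ is dominated by the quotient for $Q_0$. Here one uses that members of $3\DD(Q_0)$ are subsets of $Q_0$.

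For the last claim, assume $\mu$ is doubling. Take $Q\in\DD(Q_0)$ and $R\in\NN(Q)$. By \rf{eqTrivialInclusionDQP}, $3Q\cap Q_0\subset 3\lambda B_Q$. Since $\lambda B_Q\cap\lambda B_R\neq\varnothing$ and $r_R=r_Q$, we get $|z_Q-z_R|\leq 2\lambda r_Q$, hence
$$3Q\cap Q_0\subset B\bigl(z_R,5\lambda r_Q\bigr)=B\bigl(z_R,5\lambda A_0\cdot\tfrac12\ell(R)\bigr).$$
On the other side, $R\supset\Delta_R=B_{\frac12\ell(R)}(z_R)\cap X$. Because $\mu$ lives on $X$, $\mu(R)\geq\mu(B_{\frac12\ell(R)}(z_R))$. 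Iterating Definition \ref{defDoubling} $N_0=\lceil\log_2(5\lambda A_0)\rceil$ times from radius $\tfrac12\ell(R)$ up to at least $5\lambda A_0\cdot\tfrac12\ell(R)$, with centre $z_R\in X$, gives
$$\mu(3Q\cap Q_0)\leq C_\mu^{N_0}\mu(R).$$

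The only step needing care is the bookkeeping of the radii here, namely that $3Q\subset 3\lambda B_Q$ is centred at $z_Q$ and not at $z_R$. This is why the factor is $5\lambda A_0$ rather than $3\lambda A_0$. Everything else is routine.
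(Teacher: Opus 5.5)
Your proof is correct and follows the paper's argument. The equivalence is the complement identity $\mu(3Q\cap Q_0\setminus R)=\mu(3Q\cap Q_0)-\mu(R)$, which uses $R\subset 3Q\cap Q_0$ and finiteness. The last assertion is the inclusion $3Q\cap Q_0\subset 3\lambda B_Q\subset B(z_R,5\lambda r_Q)$ followed by $N_0$ doublings down to $\Delta_R\subset R$. Your checks of DN1--DN3 correctly supply what the paper leaves to the reader.

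The convention for $\mu(R)=0$ is not needed. $\Delta_R\subset R$ is a relatively open ball centred at $z_R\in\supp\mu$, so $\mu(R)>0$ always.
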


\begin{proof}
Since $R\in\NN(Q)\cap\DD(Q_0)$ implies $R\subset 3Q\cap Q_0$, and
$\mu(3Q\cap Q_0)<\infty$, we have
$\mu(3Q\cap Q_0\setminus R)=\mu(3Q\cap Q_0)-\mu(R)$. Hence
$\mu(3Q\cap Q_0)\leq C_\mu\mu(R)$ holds if and only if
$\mu(3Q\cap Q_0\setminus R)\leq(1-C_\mu^{-1})\mu(3Q\cap Q_0)$, and the
equivalence follows on taking the supremum over $R$.

Properties DN\ref{itemDN1}, DN\ref{itemDN2} and DN\ref{itemDN3} follow easily by applying the definitions and the tree structure of $\DD$.

For the last assertion, let $R\in\NN(Q)$. By Definition \ref{defNeighbors} we
have $|z_Q-z_R|\leq2\lambda r_Q$, so \rf{eqTrivialInclusionDQP} gives
$3Q\cap Q_0\subset3\lambda B_Q\subset B(z_R,5\lambda r_Q)$, while
$\Delta_R\subset R$ has radius $A_0^{-1}r_Q$. Iterating Definition
\ref{defDoubling} $N_0$ times,
$$\mu(3Q\cap Q_0)\leq\mu\bigl(B(z_R,5\lambda r_Q)\bigr)
\leq C_\mu^{N_0}\mu(\Delta_R)\leq C_\mu^{N_0}\mu(R).\qedformula$$\qedOut
\end{proof}

In view of the last assertion, when $\mu$ is doubling we write $C_\mu$ for the larger of the two
constants, and we no longer distinguish them. The same iteration, combined
with N\ref{eqrem3QNeighbors2}, gives a constant $\wt C_\mu$ depending only on
$C_\mu$ and on the parameters of $\DD$ such that
\begin{equation}\label{eqEnlargedBallDoubling}
	\mu\left(10\lambda^2A_0B_P\right)\leq\wt C_\mu\,\mu(3P)
	\qquad\text{for every }P\in\DD ;
\end{equation}
this is the form in which the enlarged balls of Lemma
\ref{lemBesicovitchSelection} will be used.

\begin{lemma}\label{lemNeighborWhitney}
Let $\mu$ be a Radon measure in $\R^d$, let $\DD$ be a dyadic grid associated
to $X=\supp\mu$ with $\lambda\geq6$ as in Remark \ref{rem3QNeighbors}, and let
$U\subset X$ be relatively open with non-empty relative complement, such that
$\mu$ is $\DD$-doubling in every $R\in\DD\cup3\DD$ with $R\subset U$. Let $\W$
be a Whitney decomposition of $U$. If $Q\in\W$ and $P\subsetneq Q$, then every
$\wt Q\in\W$ with $\wt Q\cap3P\neq\varnothing$ satisfies
$$C_\mu^{-2}\mu(Q)\leq\mu(\wt Q)\leq C_\mu^{2}\mu(Q).$$
\end{lemma}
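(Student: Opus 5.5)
The plan is to reduce everything to cubes of the generation of $Q$, and then compare measures with DN\ref{itemDN1} and DN\ref{itemDN2} of Lemma \ref{lemDoublingNeighbors}. Each application is made with a reference set $Q_0\in\DD\cup3\DD$ contained in $U$, so the hypothesis on $\mu$ applies.

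\emph{Step 1: reduce to a neighbour of $Q$.} Let $P\subsetneq Q$. Iterating N\ref{eqrem3QNeighbors1} along the ancestors of $P$ gives $3P\subset3\widehat P\subset\cdots\subset3Q$. Hence $\wt Q\cap3Q\neq\varnothing$. Since $3Q$ is the union of the cubes $R\in\NN(Q)$, there is some $R\in\NN(Q)$ with $\wt Q\cap R\neq\varnothing$. Moreover $\wt Q\subset3\wt Q$, so $3\wt Q\cap3Q\neq\varnothing$. Property Wh\ref{itemWhComparable} then leaves three possibilities: $\ell(\wt Q)\in\{\rho_0^{-1}\ell(Q),\ell(Q),\rho_0\ell(Q)\}$. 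By D\ref{itemD2}, these correspond respectively to $\wt Q$ being a child of $R$, $\wt Q=R$, and $\wt Q=\widehat R$.

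\emph{Step 2: containment in $U$.} The doubling facts are needed on sets lying in $U$. Property Wh\ref{itemWhRadius} gives $3Q\subset3\widehat Q\subset U$, and likewise $3\wt Q\subset3\widehat{\wt Q}\subset U$. This allows DN\ref{itemDN1} with $S=Q$ and $T=R$, giving
$$\mu(Q)\leq\mu(3Q)\leq C_\mu\,\mu(R).$$
When $\wt Q\in\{R,\text{child of }R\}$ we get $3R\subset U$, either directly or because $R=\widehat{\wt Q}$. When $\wt Q=\widehat R$ we use $3R\subset3\widehat R=3\wt Q\subset U$. In every case N\ref{eqrem3QNeighbors0} gives $Q\subset3R$, and DN\ref{itemDN1} with $S=R$ and $T=Q$ yields $\mu(R)\leq\mu(3R)\leq C_\mu\,\mu(Q)$. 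So $\mu(R)$ and $\mu(Q)$ agree up to a factor $C_\mu$.

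\emph{Step 3: the three cases.}
\begin{itemize}
\item If $\wt Q=R$, Step 2 already gives the comparison, with a single factor $C_\mu$.
\item If $\wt Q$ is a child of $R$, DN\ref{itemDN2} with reference set $R\subset U$ gives $\mu(R)\leq C_\mu\mu(\wt Q)$. Hence $\mu(Q)\leq C_\mu^2\mu(\wt Q)$, and in the other direction $\mu(\wt Q)\leq\mu(R)\leq C_\mu\mu(Q)$.
\item If $\wt Q=\widehat R$, then $\mu(Q)\leq C_\mu\mu(R)\leq C_\mu\mu(\wt Q)$. In the other direction, DN\ref{itemDN2} with reference set $\wt Q$ gives $\mu(\wt Q)\leq C_\mu\mu(R)\leq C_\mu^2\mu(Q)$.
\end{itemize}

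The only delicate point is the bookkeeping in Step 2. One must check that each use of DN\ref{itemDN1} and DN\ref{itemDN2} is made inside some set of $\DD\cup3\DD$ contained in $U$, which is exactly what Wh\ref{itemWhRadius} supplies. Everything else is the tree structure.
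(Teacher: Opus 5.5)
Your proof is correct and follows essentially the paper's argument. In both, Wh\ref{itemWhComparable} confines $\wt Q$ to three generations, $\wt Q$ is then tied to a neighbour of $Q$ up to one parent step, and DN\ref{itemDN1}--DN\ref{itemDN2} are applied inside sets that Wh\ref{itemWhRadius} places in $U$. The only difference is the choice of neighbour: the paper takes $R\in\NN(P)$ and uses N\ref{eqrem3QNeighbors3} to get $\lambda B_Q\cap\lambda B_{\wt Q}\neq\varnothing$, whereas you take $R\in\NN(Q)$ and use the nesting D\ref{itemD2}. Your version avoids N\ref{eqrem3QNeighbors3} and uses $P\subsetneq Q$ only through $3P\subset3Q$.
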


\begin{proof}
We may assume $\wt Q\neq Q$. Iterating N\ref{eqrem3QNeighbors1} gives
$3P\subset3Q$. Let $x\in\wt Q\cap3P$; by Definition \ref{defNeighbors} there
is a cube $R\in\NN(P)$ with $x\in R$. Since $3P\subset3Q$ and
$\wt Q\subset3\wt Q$, we get $3\wt Q\cap3Q\neq\varnothing$, so
Wh\ref{itemWhComparable} shows that if $Q\in\DD_{k_0}$ then
$$\wt Q\in\DD_{k_0-1}\cup\DD_{k_0}\cup\DD_{k_0+1}.$$
Since $P\subsetneq Q$ we deduce $R\subset\wt Q$, and
N\ref{eqrem3QNeighbors3} gives
$\lambda B_Q\cap\lambda B_{\wt Q}\supset\lambda B_P\cap\lambda B_R
\neq\varnothing$. In particular either $Q$ and $\wt Q$ are neighbours, or one
of them is a neighbour of the parent of the other.

By Lemma \ref{lemDoublingNeighbors} we have $\mu(3S)\leq C_\mu\mu(T)$ for
every $T\in\NN(S)$ and $\mu(\widehat S)\leq C_\mu\mu(S)$, whenever
$3S\subset U$; this holds for all the cubes involved, since $\alpha>7$ in
Lemma \ref{lemWhitney}. Applying it to $S=Q$ and $S=\wt Q$, and to their
parents when the generations differ, the lemma follows.
\end{proof}

\subsection{The dyadic differentiation theorem}

The Calder\'on--Zygmund decomposition below identifies the set where a
function exceeds a level with a union of stopping cubes, up to a null set, and
for that we need dyadic averages to converge. In a general space of
homogeneous type this is a delicate matter: Anderson and Weirich take it as a
standing hypothesis and observe that the argument usually quoted for it has
been questioned \cite[Rem.~3.2]{AndersonWeirich}. In the present setting there
is no difficulty, and no doubling is needed.

\begin{lemma}\label{lemDyadicLebesgue}
Let $\mu$ be a Radon measure in $\R^d$, let $X=\supp\mu$ and let $\DD$ be a
dyadic grid associated to $X$. Writing $Q_k(x)$ for the cube of $\DD_k$
containing $x$, every $f\in L^1_{\loc}(\mu)$ satisfies
$$\lim_{k\to\infty}\avint_{Q_k(x)}f\, d\mu=f(x)
\qquad\text{for }\mu\text{-a.e. }x\in X .$$
\end{lemma}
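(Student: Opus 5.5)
The plan is to compare dyadic averages with averages over Euclidean balls and then invoke the Besicovitch differentiation theorem, which holds for every Radon measure in $\R^d$ without any doubling hypothesis. The dyadic maximal function gives a weak $(1,1)$ bound with no doubling either, so a standard density argument would also work. We use that bound as the main tool and keep Besicovitch as a backup.

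\textbf{Step 1: localization.} Since the statement is local, we may replace $f$ by $f\chi_{B}$ for a large ball $B$. For $x$ well inside $B$ and $k$ large, $Q_k(x)\subset B_{Q_k(x)}$ has diameter tending to $0$ by D\ref{itemD4}. So it suffices to treat $f\in L^1(\mu)$.

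\textbf{Step 2: weak $(1,1)$ bound.} Define the dyadic maximal operator $M_\DD f(x)\coloneqq\sup_k\avint_{Q_k(x)}|f|\,d\mu$. Averages over cubes of $\mu$-measure zero are excluded; those cubes form a $\mu$-null set, being a countable union of null sets. For $t>0$, the set $\{M_\DD f>t\}$ is covered by the maximal dyadic cubes $Q$ with $\avint_Q|f|\,d\mu>t$. Maximal cubes exist because the averages are bounded by $\|f\|_1/\mu(Q)$. One caveat: an increasing chain $Q_k(x)$ as $k\to-\infty$ could keep $\mu(Q)$ bounded if $\mu(X)<\infty$. In that case we restrict to $\sup_{k\geq k_0}$, which suffices for the limit. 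By D\ref{itemD2} the maximal cubes are disjoint, so
$$\mu(\{M_\DD f>t\})\leq t^{-1}\|f\|_{L^1(\mu)} .$$
Only nestedness is used here, with no doubling and no geometry.

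\textbf{Step 3: density and conclusion.} Continuous compactly supported functions are dense in $L^1(\mu)$ because $\mu$ is Radon. For continuous $g$, the convergence $\avint_{Q_k(x)}g\,d\mu\to g(x)$ holds at every $x\in X$, since $Q_k(x)\subset B(x,2r_{Q_k(x)})$ by D\ref{itemD4} and the radii tend to $0$. Apply the usual argument to $f-g$: the limsup oscillation $\Omega f(x)\coloneqq\limsup_k\bigl|\avint_{Q_k(x)}f\,d\mu-f(x)\bigr|$ satisfies
$$\Omega f\leq M_\DD(f-g)+|f-g| .$$
Hence $\mu(\{\Omega f>2t\})\leq 2t^{-1}\|f-g\|_1$, which can be made arbitrarily small, so $\Omega f=0$ $\mu$-a.e.

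\textbf{Main obstacle.} The only delicate point is verifying that every cube $Q_k(x)$ with $x\in X$ has positive $\mu$-measure, so that the averages make sense. This follows because $Q_k(x)\supset\Delta_{Q_k(x)}$, a relatively open ball in $X=\supp\mu$ centred at a point of the support. Therefore $\mu(Q_k(x))>0$ always, and no exceptional set is needed at all. With this in hand, the stopping argument in Step 2 is the key point and is entirely routine.
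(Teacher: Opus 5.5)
Your proof is correct and follows the paper's argument. The steps are the same: a weak-type $(1,1)$ bound with constant $1$ for the truncated dyadic maximal operator, obtained from the disjointness of the maximal cubes; density of continuous functions; and convergence for continuous $g$ because $Q_k(x)\subset B(x,2r_{Q_k(x)})$ shrinks to $x$. Your localization to a ball with $k\geq k_0$ does the job of the paper's fixed $Q_0$ with $\mu(Q_0)<\infty$. Your check that $\mu(Q_k(x))\geq\mu(\Delta_{Q_k(x)})>0$ is a useful detail the paper leaves implicit.

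Drop the Besicovitch ``backup'' from your plan, though, because it does not work here. Transferring ball averages to dyadic averages needs a bound $\mu(B(x,2r_{Q_k(x)}))\lesssim\mu(Q_k(x))$, which is a doubling-type hypothesis and is not assumed in this lemma.
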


\begin{proof}
It suffices to argue on a fixed $Q_0\in\DD$ with $\mu(Q_0)<\infty$ and
$f\in L^1(\mu|_{Q_0})$. The dyadic maximal operator
$M_{\DD}f(x)\coloneqq \sup_{k}\avint_{Q_k(x)}|f|\,d\mu$ is of weak type
$(1,1)$ with constant $1$, with no hypothesis on $\mu$: the set
$\{M_\DD f>\lambda\}$ is the union of the maximal cubes $Q$ of $\DD(Q_0)$ with
$\avint_Q|f|\,d\mu>\lambda$, which are pairwise disjoint by D\ref{itemD2}, so
$$\mu(\{M_\DD f>\lambda\})=\sum_j\mu(Q_j)
\leq\frac1\lambda\sum_j\int_{Q_j}|f|\,d\mu\leq\frac{\|f\|_{L^1(\mu)}}\lambda .$$
On the other hand, if $g$ is continuous then $\avint_{Q_k(x)}g\,d\mu\to g(x)$
for every $x$, because $x\in Q_k(x)\subset B_{Q_k(x)}$ and
$r_{Q_k(x)}=\frac{A_0}2\rho_0^{-k}\to0$ by D\ref{itemD4}. Since $X$ is closed
in $\R^d$ and $\mu$ is Radon, continuous functions are dense in
$L^1(\mu|_{Q_0})$. The two facts combine in the usual way: given $\ve>0$,
pick $g$ continuous with $\|f-g\|_{L^1(\mu|_{Q_0})}<\ve$ and note that
$$\limsup_{k\to\infty}\left|\avint_{Q_k(x)}f\, d\mu-f(x)\right|
\leq M_\DD(f-g)(x)+|f-g|(x) ,$$
so that the set where the left-hand side exceeds $\lambda$ has measure at most
$4\ve/\lambda$, by the weak type $(1,1)$ bound and Chebyshev's inequality.
Letting $\ve\to0$ and then $\lambda\to0$ gives the conclusion.
\end{proof}

\subsection{The Calder\'on--Zygmund decomposition and its iteration}

We adapt the iterated Calder\'on--Zygmund decomposition (see for instance
\cite[Cor.~9.2.4]{GrafakosModern}) to the basis $\DD\cup3\DD$.

\begin{lemma}[Calder\'on--Zygmund decomposition]\label{lemCZDecomposition}
Let $\mu$ be a Radon measure in $\R^d$ and let $Q_0\in\DD\cup3\DD$ be such
that $\mu$ is $\DD$-doubling in $Q_0$ with constant $C_\mu$. Let
$Q\in\DD(Q_0)\cup3\DD(Q_0)$, of generation $k_0$, let $f\in L^1(Q)$ be
non-negative and let
$$t\geq C_\mu^{-1}\avint_Q f\, d\mu .$$
Then there exist a decomposition $Q=U_t\cup G_t\cup Z_t$ into disjoint
$\mu$-measurable sets and a family of pairwise disjoint cubes
$\mathcal{F}_t\subset\bigcup_{k\geq k_0}\DD_k\cup\{Q\}$ such that
\begin{enumerate}[CZ1.]
	\item \label{itemCZU} $U_t=\bigcup_{R\in\mathcal{F}_t}R$, and every
	$R\in\mathcal{F}_t$ satisfies $t<\avint_R f\, d\mu\leq C_\mu t$;
	\item \label{itemCZG} $f(x)\leq t$ for every $x\in G_t$;
	\item \label{itemCZZ} $\mu(Z_t)=0$.
\end{enumerate}
\end{lemma}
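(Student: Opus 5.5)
The plan is the standard stopping-time argument run on the tree $\DD(Q)$, with the $\DD$-doubling condition supplying the upper bound $C_\mu t$.

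\emph{Case analysis.} If $\avint_Q f\,d\mu>t$, then $t<\avint_Q f\,d\mu\leq C_\mu t$ by hypothesis. In this case we take $\mathcal F_t=\{Q\}$, $U_t=Q$ and $G_t=Z_t=\varnothing$, and CZ1--CZ3 hold trivially. So assume from now on that $\avint_Q f\,d\mu\leq t$.

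\emph{Stopping family.} Define $\mathcal F_t$ to be the family of maximal cubes $R\in\DD(Q)$, with $R\subsetneq Q$ when $Q\in\DD$, such that $\avint_R f\,d\mu>t$. The cubes of generation $k_0$ contained in $Q$ are included, since when $Q=3Q'\cap Q_0$ is a triple cube it is, by Remark \ref{remAdjacentSystems}, a union of cubes of generation $k_0$. Nesting D\ref{itemD2} and maximality make these cubes pairwise disjoint, and every chain is bounded above because generations are bounded below by $k_0$. The lower bound in CZ1 holds by construction.

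\emph{Upper bound in CZ1.} Let $R\in\mathcal F_t$.
\begin{itemize}
\item If $\widehat R\subset Q$ is a candidate that failed the stopping condition, then DN\ref{itemDN2} (applicable because $\widehat R\subset Q\subset Q_0$) gives
\[
\avint_R f\,d\mu\leq\frac{\mu(\widehat R)}{\mu(R)}\avint_{\widehat R}f\,d\mu\leq C_\mu t.
\]
\item If $R$ is of top generation inside $Q$, its natural comparison set is $Q$ itself. We have $\mu(Q)\leq C_\mu\mu(R)$, either by DN\ref{itemDN2} when $Q=\widehat R\in\DD$, or by Definition \ref{defDoublingConstant} when $Q=3Q'\cap Q_0$ with $R\in\NN(Q')\cap\DD(Q_0)$. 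Hence
\[
\avint_R f\,d\mu\leq C_\mu\avint_Q f\,d\mu\leq C_\mu t.
\]
\end{itemize}

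\emph{Remainder.} Set $U_t=\bigcup_{R\in\mathcal F_t}R$. For $x\in Q\setminus U_t$, every dyadic cube $Q_k(x)\subset Q$ with $k\geq k_0$ (or $k>k_0$) has average at most $t$. By Lemma \ref{lemDyadicLebesgue}, $f(x)=\lim_k\avint_{Q_k(x)}f\,d\mu\leq t$ outside a $\mu$-null set. Let $Z_t$ be that exceptional set intersected with $Q\setminus U_t$, and put $G_t=Q\setminus(U_t\cup Z_t)$. All three sets are measurable, because $U_t$ is a countable union of Borel cubes and $Z_t$ comes from the a.e.\ convergence.

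\emph{Main obstacle.} The delicate point is the top-level comparison when $Q\in 3\DD(Q_0)$. The top cubes $R$ need not be children of $Q$; they are neighbours of $Q'$ lying in $Q_0$. This is exactly where Definition \ref{defDoublingConstant} is tailored, since it bounds $\mu(3Q'\cap Q_0)/\mu(R)$ by $C_\mu$, and the hypothesis $t\geq C_\mu^{-1}\avint_Q f\,d\mu$ is calibrated for this. One must also make sure that only cubes $R\subset Q$ are used, and that the generation bookkeeping ($k\geq k_0$) matches the family stated in the lemma.
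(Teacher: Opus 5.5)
Your proposal is correct and follows essentially the same route as the paper: you take maximal stopping cubes of generation at least $k_0$ inside $Q$ and dispose of the trivial case $\mathcal F_t=\{Q\}$. You bound the averages from above via the parent, or, for the building blocks of a triple cube, via Definition \ref{defDoublingConstant} with $Q$ in the role of the parent, and you handle the remainder with Lemma \ref{lemDyadicLebesgue}.

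The one point to tighten is the bookkeeping you already flagged. The candidates should be taken explicitly in $\bigcup_{k\geq k_0}\DD_k$, as the lemma requires, rather than in all of $\DD(Q)$. By N\ref{eqrem3QNeighbors1}, a triple cube $3Q'\cap Q_0$ may contain $\widehat{Q'}$, which has generation $k_0-1$.
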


\begin{proof}
Let $\mathcal{F}_t$ be the family of maximal cubes
$R\in\bigcup_{k\geq k_0}\DD_k\cup\{Q\}$ with $R\subset Q$ and
$\avint_R f\, d\mu>t$; let $U_t$ be as in CZ\ref{itemCZU}, let $Z_t$ be the
exceptional set of Lemma \ref{lemDyadicLebesgue} for $f$, and let
$G_t\coloneqq Q\setminus(U_t\cup Z_t)$.

If $t<\avint_Q f\, d\mu$ then $\mathcal{F}_t=\{Q\}$, and CZ\ref{itemCZU} holds
because $t\geq C_\mu^{-1}\avint_Q f\,d\mu$ by hypothesis; so we may assume
$Q\notin\mathcal{F}_t$. Then for $R\in\mathcal{F}_t$ either $\widehat R$ is an
eligible cube at which the stopping condition fails, or $R$ is one of the
building blocks of $Q$ when $Q\in3\DD$. Using in the latter case the
convention $\widehat R=Q$, and Lemma \ref{lemDoublingNeighbors}, we get
$$\avint_R f\, d\mu\leq C_\mu\avint_{\widehat R}f\, d\mu\leq C_\mu t .$$
Conditions CZ\ref{itemCZG} and CZ\ref{itemCZZ} follow from Lemma
\ref{lemDyadicLebesgue}.
\end{proof}

\begin{lemma}[Calder\'on--Zygmund iteration]\label{lemCZIteration}
In the setting of Lemma \ref{lemCZDecomposition}, let $0<\tau<1$ and
$\alpha_0\geq C_\mu^{-1}\avint_Q f\, d\mu$, and put
$\alpha_j\coloneqq \left[C_\mu\tau^{-1}\right]^{j}\alpha_0$ for $j\geq1$. Then
the decompositions $Q=U^j\cup G^j\cup Z^j$ given by Lemma
\ref{lemCZDecomposition} at the levels $\alpha_j$, with
$U^j=\bigcup_{R\in\mathcal{F}^j}R$, satisfy:
\begin{enumerate}[CZ1.]
	\setcounter{enumi}{3}
	\item \label{itemCZIhat} if $R\in\mathcal{F}^j$ then $R\subset\hat R$ for
	some $\hat R\in\mathcal{F}^{j-1}$;
	\item \label{itemCZLayers} $\mu(U^{j+1})\leq\tau\mu(U^j)\leq\cdots\leq
	\tau^{j+1}\mu(U^0)\xrightarrow{j\to\infty}0$;
	\item \label{itemCZLayersByCubes} if $R\in\mathcal{F}^j$ then
	$\mu(R\cap U^{j+1})<\tau\mu(R)$.
\end{enumerate}
\end{lemma}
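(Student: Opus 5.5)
The plan is to run Lemma \ref{lemCZDecomposition} once at each level $\alpha_j$ on the same $Q$ with the same $f$, and then to read off CZ4--CZ6 from the maximality of the stopping cubes. The hypothesis of Lemma \ref{lemCZDecomposition} holds at every level: $\alpha_j\geq\alpha_0\geq C_\mu^{-1}\avint_Q f\,d\mu$, because $C_\mu\tau^{-1}>1$. So each $\mathcal F^j$ is the family of maximal cubes $R\subset Q$, $R\in\bigcup_{k\geq k_0}\DD_k\cup\{Q\}$, with $\avint_R f\,d\mu>\alpha_j$.

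For CZ\ref{itemCZIhat}, take $R\in\mathcal F^j$ with $j\geq1$. Then $\avint_R f\,d\mu>\alpha_j>\alpha_{j-1}$, so $R$ is an eligible cube at level $\alpha_{j-1}$. It is therefore contained in a maximal eligible cube $\hat R\in\mathcal F^{j-1}$. The maximal element exists because the eligible cubes containing $R$ form a chain under the nesting property D\ref{itemD2}, and that chain is bounded by $Q$. Two small points need care:
\begin{itemize}
\item when $Q\in3\DD$, the chain passes through a building block of $Q$ and then $Q$ itself, using the same convention $\widehat R=Q$ as in the proof of Lemma \ref{lemCZDecomposition};
\item a cube of the generation of $Q$ contained in $3Q\cap Q_0$ is one of those building blocks, so the chain is still linearly ordered by inclusion.
\end{itemize}

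The main estimate is CZ\ref{itemCZLayersByCubes}. Fix $R\in\mathcal F^j$. By CZ\ref{itemCZIhat} and disjointness, $R\cap U^{j+1}$ is the disjoint union of the cubes $S\in\mathcal F^{j+1}$ with $S\subset R$. Each such $S$ satisfies $\mu(S)<\alpha_{j+1}^{-1}\int_S f\,d\mu$. Summing, and using the upper bound of CZ\ref{itemCZU} for $R$, we get
$$\mu(R\cap U^{j+1})<\frac1{\alpha_{j+1}}\int_R f\,d\mu\leq\frac{C_\mu\alpha_j}{\alpha_{j+1}}\mu(R)=\tau\,\mu(R),$$
since $\alpha_{j+1}=C_\mu\tau^{-1}\alpha_j$. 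One edge case must be checked: if $R=Q$ at level $j$, the upper bound $\avint_Q f\le C_\mu\alpha_j$ still holds, by the hypothesis on $\alpha_0$.

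CZ\ref{itemCZLayers} then follows by summing CZ\ref{itemCZLayersByCubes} over $R\in\mathcal F^j$. This uses $U^{j+1}\subset U^j$, which is CZ\ref{itemCZIhat}, together with the disjointness of the cubes in $\mathcal F^j$. Since $\mu(U^0)\leq\mu(Q)<\infty$, the right-hand side tends to $0$.

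The only delicate point I expect is bookkeeping rather than analysis: making sure the stopping families are nested correctly when $Q\in3\DD$ is not a dyadic cube. There the maximal-cube argument should be run inside each building block $R'\in\NN(\cdot)$ with $R'\subset Q$, treating $Q$ as their common parent.
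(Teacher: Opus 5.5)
Your proof is correct and follows the paper's argument. CZ4 comes from maximality of the stopping cubes at the lower level, CZ6 from writing $R\cap U^{j+1}$ as the disjoint union of the cubes of $\mathcal F^{j+1}$ inside $R$ and comparing with the upper bound $\avint_R f\,d\mu\le C_\mu\alpha_j$ of CZ1, and CZ5 from summing CZ6 over $\mathcal F^j$ using CZ4 and disjointness. Your extra bookkeeping (the chain argument via D2 for existence of the maximal cube, the convention $\widehat R=Q$ when $Q\in3\DD$, and $\mu(Q)<\infty$ for the limit) is consistent with the paper and only makes explicit what it leaves implicit.
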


\begin{proof}
For CZ\ref{itemCZIhat}, note that $R\in\mathcal{F}^{j+1}$ satisfies
$\avint_R f\, d\mu>\alpha_{j+1}>\alpha_j$, so $R$ is contained in some maximal
cube with that property, possibly $R$ itself.

For CZ\ref{itemCZLayersByCubes}, let $R\in\mathcal{F}^j$. Then
$$
\begin{aligned}
	C_\mu\alpha_j
	&\by{CZ\ref{itemCZU}}\geq\avint_R f\, d\mu
	\geq\frac1{\mu(R)}\int_{R\cap U^{j+1}}f\, d\mu
	\by{CZ\ref{itemCZIhat}}=\frac1{\mu(R)}
	\sum_{R'\in\mathcal{F}^{j+1}:\,R'\subset R}\mu(R')\avint_{R'}f\, d\mu\\
	&\by{CZ\ref{itemCZU}}>\frac{\alpha_{j+1}}{\mu(R)}\mu(R\cap U^{j+1}),
\end{aligned}
$$
and CZ\ref{itemCZLayersByCubes} follows since
$\alpha_{j+1}=C_\mu\tau^{-1}\alpha_j$. Finally CZ\ref{itemCZLayers} is
obtained by summing CZ\ref{itemCZLayersByCubes} over $R\in\mathcal{F}^j$ and
using CZ\ref{itemCZIhat}.
\end{proof}

\section{Classes of weights}
\label{secClasses}

Every condition considered in this paper compares an average of $\nu$ with an
average of $\mu$ over the sets of a fixed family. We begin by fixing that
family.

\begin{definition}\label{defBasis}
A \emph{basis} is either the family of restricted balls of $X$ or the family
$\DD\cup3\DD$. In the first case $V\subset X$ denotes a relatively open set and
$$\BB_V\coloneqq \{\Delta\subset V:\, \Delta\text{ a restricted ball}\};$$
in the second, $V$ is a cube $Q_0\in\DD\cup3\DD$ and
$\BB_{Q_0}\coloneqq \DD(Q_0)\cup3\DD(Q_0)$. Statements made for $\BB_V$ with no
further qualification hold in both cases, and when $V=X$ we drop it from the
notation. Throughout the text, $\BB_{\mathcal{D}}$ will refer to the dyadic basis $\mathcal{D}\cup3\mathcal{D}$.
\end{definition}

What follows is a particular case of the formalism of Duoandikoetxea,
Mart\'in-Reyes and Ombrosi \cite[\S2]{DuoandikoetxeaMartinReyesOmbrosi}, who
work over an arbitrary basis in a $\sigma$-finite measure space; we write
$A_\infty$ and $RH_1$ for the two ends of the scale as in
\cite{BeznosovaReznikov}, and label the variants by initials rather than by
numbers as in \cite{MudarraPerfekt}.

\begin{definition}\label{defStrongClasses}
Let $\mu$ be a Radon measure in $\R^d$, let $X=\supp\mu$, let $\BB_V$ be as in
Definition \ref{defBasis}, let $w\in L^1_{\loc}(\mu)$ and let $1<p<\infty$.
\begin{itemize}
	\item $w\in A_p(\mu,\BB_V)$ if $w^{\frac1{1-p}}\in L^1_{\loc}(\mu)$ and
	$$\avint_{B}w\, d\mu\left(\avint_{B}w^{\frac1{1-p}}\, d\mu\right)^{p-1}
	\leq C\qquad\text{for every }B\in\BB_V .$$
	\item $w\in RH_p(\mu,\BB_V)$ if $w\in L^p_{\loc}(\mu)$, $w>0$ $\mu$-a.e.\
	and
	\begin{equation}\label{eqStrongRH}
		\left(\avint_{B}w^p\, d\mu\right)^{\frac1p}\leq C\avint_{B}w\, d\mu
		\qquad\text{for every }B\in\BB_V .
	\end{equation}
\end{itemize}
The least admissible constants are denoted $[w]_{A_p(\mu,\BB_V)}$ and
$[w]_{RH_p(\mu,\BB_V)}$, and we set
$$A_\infty^{U}(\mu,\BB_V)\coloneqq \bigcup_{1<p<\infty}A_p(\mu,\BB_V),
\qquad
RH_1^{U}(\mu,\BB_V)\coloneqq \bigcup_{1<p<\infty}RH_p(\mu,\BB_V),$$
the superscript standing for \emph{union}.
\end{definition}

\begin{definition}\label{defQC}
In the setting of Definition \ref{defStrongClasses}, let $\nu$ be a Radon
measure supported in $X$ and let $0<\delta,\ve<1$. We say that
$\nu\in QC_{\delta,\ve}(\mu,\BB_V)$ if
\begin{equation}\label{eqDefQC}
	\mu(E)<\delta\,\mu(B)\implies\nu(E)\leq\ve\,\nu(B)
	\qquad\text{for every Borel }E\subset B\in\BB_V ,
\end{equation}
and we put $QC(\mu,\BB_V)\coloneqq
\bigcup_{0<\delta,\ve<1}QC_{\delta,\ve}(\mu,\BB_V)$ and
$$\begin{aligned}
A_\infty^{QC}(\mu,\BB_V)&\coloneqq \bigcap_{0<\delta<1}\bigcup_{0<\ve<1}
QC_{\delta,\ve}(\mu,\BB_V),\\
RH_1^{QC}(\mu,\BB_V)&\coloneqq \bigcap_{0<\ve<1}\bigcup_{0<\delta<1}
QC_{\delta,\ve}(\mu,\BB_V).
\end{aligned}$$
Here $QC$ stands for \emph{quantitative (mutual absolute) continuity}, for the
reason given in Remark \ref{remWMutual}.
\end{definition}

The two classes of Definition \ref{defQC} differ only in the order of the
quantifiers, and this is the whole point: $A_\infty^{QC}$ requires that a set
small for $\mu$ be small for $\nu$, the threshold on the $\nu$ side being
chosen \emph{after} the one on the $\mu$ side, while $RH_1^{QC}$ requires the
same with the choice made in the opposite order. For the Lebesgue measure the
two coincide, and coincide with $A_\infty^{U}$ and $RH_1^{U}$
\cite{Muckenhoupt}; here they do not, and the classes of Definition
\ref{defStrongClasses} do not either.

\begin{remark}\label{remWMutual}
Condition \rf{eqDefQC} is a quantitative form of mutual absolute continuity.
First, it forces $\nu|_V\ll\mu|_V$. Indeed, let $E\subset V$ be Borel with
$\nu(E)>0$; applying the Lebesgue differentiation theorem for $\nu$ to
$\chi_E$ --- which is available for restricted balls because $\nu$ is a Radon
measure in $\R^d$, by the Besicovitch covering theorem, and for
$\DD\cup3\DD$ by Lemma \ref{lemDyadicLebesgue} --- we find $B\in\BB_V$,
arbitrarily small, with $\nu(E\cap B)>\ve\,\nu(B)$; then \rf{eqDefQC} gives
$\mu(E)\geq\mu(E\cap B)\geq\delta\,\mu(B)>0$.

Second, replacing $E$ by $B\setminus E$ shows that \rf{eqDefQC} is equivalent
to
\begin{equation}\label{eqDefQCbis}
	\mu(E)>(1-\delta)\,\mu(B)\implies\nu(E)\geq(1-\ve)\,\nu(B)
\end{equation}
for every Borel $E\subset B\in\BB_V$,
and hence, taking contrapositives, to
$\nu(E)<(1-\ve)\nu(B)\implies\mu(E)\leq(1-\delta)\mu(B)$. In particular
$\supp\mu=\supp\nu$ and $\mu|_V\ll\nu|_V$ as well.

We also write $\nu\in RH_\infty(\mu,\BB_V)$ when $\sup_Bw\leq C\avint_Bw\, d\mu$
for every $B\in\BB_V$; this is the strongest of the conditions below, and
$RH_\infty\subset RH_p$ for every $p$.

Consequently $d\nu=w\, d\mu$ with $0<w<\infty$ both $\mu$- and $\nu$-a.e.,
with $w\in L^1_{\loc}(\mu|_V)$ and $w^{-1}\in L^1_{\loc}(\nu|_V)$, and we
write indifferently $\nu\in QC_{\delta,\ve}(\mu,\BB_V)$ or
$w\in QC_{\delta,\ve}(\mu,\BB_V)$; the same convention applies to all the
classes above.
\end{remark}

Throughout, $p'$ denotes the conjugate exponent of $p$.

\begin{lemma}\label{lemTrivialImplications}
Let $\mu$ be a Radon measure in $\R^d$, let $\BB_V$ be as in Definition
\ref{defBasis} and let $\nu$ be a Radon measure supported in $X$. Then
\begin{equation}\label{eqTrivialImplications}
	A_\infty^{U}\subset A_\infty^{QC}\subset QC
	\supset RH_1^{QC}\supset RH_1^{U},
\end{equation}
all classes taken with respect to $(\mu,\BB_V)$, and
\begin{equation}\label{eqDualityWeights}
\begin{aligned}
	\nu\in A_p(\mu,\BB_V)&\iff\mu\in RH_{p'}(\nu,\BB_V),\\
	\nu\in QC_{\delta,\ve}(\mu,\BB_V)&\iff
	\mu\in QC_{1-\ve,1-\delta}(\nu,\BB_V),\\
	\nu\in A_\infty^{QC}(\mu,\BB_V)&\iff\mu\in RH_1^{QC}(\nu,\BB_V).
\end{aligned}
\end{equation}
Moreover, if $w\in RH_p(\mu,\BB_V)$ with constant $C_1$, then
$\nu\in QC_{\delta,\ve}(\mu,\BB_V)$ for every $0<\ve<1$, with
$\delta\coloneqq \left(C_1^{-1}\ve\right)^{p'}$.
\end{lemma}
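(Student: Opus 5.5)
The lemma has three parts: the chain of inclusions \rf{eqTrivialImplications}, the dualities \rf{eqDualityWeights}, and the passage from $RH_p$ to $QC$. I would prove them in the order: the $RH_p$ estimate, then the inclusions, then the dualities.

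\emph{The $RH_p$ estimate.} Let $E\subset B\in\BB_V$ be Borel with $\mu(E)<\delta\mu(B)$. By H\"older's inequality and \rf{eqStrongRH},
$$\nu(E)=\int_E w\,d\mu\leq\mu(E)^{1/p'}\Bigl(\int_B w^p\,d\mu\Bigr)^{1/p}
=\Bigl(\frac{\mu(E)}{\mu(B)}\Bigr)^{1/p'}\mu(B)\Bigl(\avint_B w^p\,d\mu\Bigr)^{1/p}
\leq\delta^{1/p'}C_1\,\nu(B).$$
With $\delta=(C_1^{-1}\ve)^{p'}$ this gives $\nu(E)\leq\ve\,\nu(B)$. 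Since $\ve\in(0,1)$ is arbitrary and $\delta$ depends only on $\ve$, this shows $RH_p\subset RH_1^{QC}$ for every $p$, and hence $RH_1^U\subset RH_1^{QC}$.

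\emph{The remaining inclusions.} The inclusions $A_\infty^{QC}\subset QC\supset RH_1^{QC}$ are immediate from the definitions: an intersection over one parameter of unions over the other is contained in the union over both. The inclusion $A_\infty^U\subset A_\infty^{QC}$ is cleanest through duality. First establish the three equivalences in \rf{eqDualityWeights}; then $w\in A_p(\mu)$ gives $\mu\in RH_{p'}(\nu)\subset RH_1^{QC}(\nu)$ by the step above, and the third equivalence converts this to $\nu\in A_\infty^{QC}(\mu)$.

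\emph{The dualities.} For the $A_p$ equivalence, write $\sigma=w^{1/(1-p)}$, so that $d\mu=w^{-1}d\nu$. Then $\avint_B w^{-p'}\,d\nu\cdot\nu(B)^{-1}$-type identities show that $\avint^{\nu}_B (w^{-1})^{p'}d\nu=\mu(B)^{-1}\cdots$. More precisely,
$$\frac{1}{\nu(B)}\int_B w^{-p'}w\,d\mu=\frac{\int_B w^{1-p'}d\mu}{\int_B w\,d\mu},
\qquad 1-p'=\frac1{1-p},$$
and $\mu$-averages of $w^{-1}$ with respect to $\nu$ equal $\mu(B)/\nu(B)$. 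Raising to the appropriate powers, the $RH_{p'}(\nu)$ inequality for $w^{-1}$ becomes exactly the $A_p$ product bound. The integrability and positivity requirements correspond under the same change of density, since $A_p$ forces $w>0$ $\mu$-a.e. For the $QC$ equivalence, use the reformulation \rf{eqDefQCbis} and its contrapositive from Remark \ref{remWMutual}: $\nu(E)<(1-\ve)\nu(B)$ implies $\mu(E)\leq(1-\delta)\mu(B)$. After a harmless adjustment of strict versus non-strict inequalities, this is precisely $\mu\in QC_{1-\ve,1-\delta}(\nu)$. The third equivalence then follows by swapping the quantifiers, since $\delta\leftrightarrow 1-\ve$ exchanges the roles of the two parameters.

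\emph{Main obstacle.} The only delicate point is this strict/non-strict bookkeeping in the $QC$ duality. I would handle it by perturbing $\delta$ or $\ve$ slightly, which leaves the unions and intersections defining the classes unchanged.
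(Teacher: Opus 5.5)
Your proposal is correct and follows the paper's proof essentially step for step. You use H\"older for the $RH_p\Rightarrow QC$ estimate, the definitions for the two middle inclusions, and the chain $\nu\in A_p(\mu)\Rightarrow\mu\in RH_{p'}(\nu)\subset RH_1^{QC}(\nu)\Rightarrow\nu\in A_\infty^{QC}(\mu)$ for $A_\infty^{U}\subset A_\infty^{QC}$; the only difference is that the paper cites \cite[Rem.~2.3]{DuoandikoetxeaMartinReyesOmbrosi} for the $A_p$ duality, while you check it by the change of density.

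The obstacle you flag is not actually there. Replacing $E$ by $B\setminus E$ and taking the contrapositive turns ``$\mu(E)<\delta\mu(B)\Rightarrow\nu(E)\le\ve\nu(B)$'' into exactly ``$\nu(E)<(1-\ve)\nu(B)\Rightarrow\mu(E)\le(1-\delta)\mu(B)$''. That is the definition of $\mu\in QC_{1-\ve,1-\delta}(\nu,\BB_V)$ with the same strict/non-strict pattern, so no perturbation is needed. Perturbing would in fact only give the second line of \rf{eqDualityWeights} with slightly degraded parameters rather than the exact ones stated.
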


\begin{proof}
The first line of \rf{eqDualityWeights} is
\cite[Rem.~2.3]{DuoandikoetxeaMartinReyesOmbrosi}, valid over any basis, with
$[w]_{A_p(\mu,\BB_V)}=[w^{-1}]_{RH_{p'}(\nu,\BB_V)}^{p}$; we do not reprove
it. The second line is the equivalence between \rf{eqDefQC} and
\rf{eqDefQCbis} established in Remark \ref{remWMutual}, read with the roles of
$\mu$ and $\nu$ interchanged.

The third line follows from the second by inspecting the quantifiers. Writing
$\delta'=1-\ve$ and $\ve'=1-\delta$, which is an involution of
$(0,1)\times(0,1)$, we get
$$\nu\in\bigcap_{\delta}\bigcup_{\ve}QC_{\delta,\ve}(\mu,\BB_V)
\iff\mu\in\bigcap_{\ve'}\bigcup_{\delta'}QC_{\delta',\ve'}(\nu,\BB_V),$$
because the involution exchanges the two coordinates: a $\delta$ quantified
first becomes an $\ve'$ quantified first, and an $\ve$ quantified second
becomes a $\delta'$ quantified second. 

For the last assertion, let $E\subset B\in\BB_V$ be Borel. By H\"older's
inequality and \rf{eqStrongRH},
$$\frac{\nu(E)}{\nu(B)}=\frac1{\nu(B)}\int_{E}w\, d\mu
\leq\frac{\mu(B)}{\nu(B)}\left(\avint_{B}w^{p}\, d\mu\right)^{\frac1p}
\left(\frac{\mu(E)}{\mu(B)}\right)^{\frac1{p'}}
\leq C_1\left(\frac{\mu(E)}{\mu(B)}\right)^{\frac1{p'}},$$
so $\mu(E)<\delta\mu(B)$ with $\delta=(C_1^{-1}\ve)^{p'}$ gives
$\nu(E)\leq\ve\,\nu(B)$.

Finally we obtain \rf{eqTrivialImplications}. The last assertion just proved
gives $RH_1^{U}\subset RH_1^{QC}$, and the inclusions
$A_\infty^{QC}\subset QC$ and $RH_1^{QC}\subset QC$ are immediate from the
definitions. For the remaining one, $\nu\in A_\infty^{U}(\mu,\BB_V)$ means
$\nu\in A_p(\mu,\BB_V)$ for some $p$, hence $\mu\in RH_{p'}(\nu,\BB_V)\subset
RH_1^{QC}(\nu,\BB_V)$ by the first and last lines of the proof, hence
$\nu\in A_\infty^{QC}(\mu,\BB_V)$ by the third line of
\rf{eqDualityWeights}.
\end{proof}

Figure \ref{figImplications} collects \rf{eqTrivialImplications} together with
the reverse inclusions proved in \S\ref{secDyadic} and with the weak classes of
\S\ref{secWeak}; it is the shape of the whole paper on one page.

\begin{figure}[ht]
\centering
\begin{tikzpicture}[
	>={Stealth[length=1.8mm]},
	scale=0.86, transform shape,
	cls/.style={draw,rounded corners=2pt,inner sep=3pt,font=\small},
	triv/.style={->},
	doub/.style={->,densely dashed},
	thann/.style={->,line width=1pt},
	lbl/.style={font=\scriptsize,inner sep=1.5pt,fill=white},
	row/.style={font=\footnotesize\itshape,anchor=west}
]
	\node[row] at (-0.6,1.5)
		{(a) strong classes over $\DD\cup3\DD$, inside a fixed $Q_0$};
	\node[cls] (aAU)  at (0,0)    {$A_\infty^{U}$};
	\node[cls] (aAQC) at (2.2,0)  {$A_\infty^{QC}$};
	\node[cls] (aQC)  at (5.6,0)  {$QC$};
	\node[cls] (aRQC) at (7.8,0)  {$RH_1^{QC}$};
	\node[cls] (aRU)  at (10.0,0) {$RH_1^{U}$};
	\node[cls] (aQCd) at (3.9,-1.35) {$QC^{d}$};
	\draw[triv] (aAU)  -- (aAQC);
	\draw[triv] (aAQC) -- (aQC);
	\draw[triv] (aRU)  -- (aRQC);
	\draw[triv] (aRQC) -- (aQC);
	\draw[doub] (aAQC) -- node[lbl,anchor=east,pos=0.55]
		{Lem \ref{lemBothDoubling}} (aQCd);
	\draw[triv] (aQCd) -- (aQC);
	\node[lbl] at (4.05,-.4) {(Ex.~\ref{exStaircase})};
	\node[lbl] at (4.3,-.65) {$\subsetneq$};
	\draw[doub] (aQCd.south) to[out=-150,in=-90]
		node[lbl,below]{Thm \ref{theoEqualityA}} (aAU.south);
	\draw[doub] (aQC.south) to[out=-60,in=-120]
		node[lbl,below]{Thm \ref{theoEqualityRH}} (aRU.south);

	\node[row] at (-0.6,-2.95)
		{(b) the same classes over restricted balls};
	\node[cls] (bAU)  at (0,-4.5)    {$A_\infty^{U}$};
	\node[cls] (bAQC) at (2.2,-4.5)  {$A_\infty^{QC}$};
	\node[cls] (bQC)  at (5.6,-4.5)  {$QC$};
	\node[cls] (bRQC) at (7.8,-4.5)  {$RH_1^{QC}$};
	\node[cls] (bRU)  at (10.0,-4.5) {$RH_1^{U}$};
	\node[cls] (bQCd) at (3.8,-6.0) {$QC^{d}$};
	\draw[triv] (bAU)  -- (bAQC);
	\draw[triv] (bAQC) -- (bQC);
	\draw[triv] (bRU)  -- (bRQC);
	\draw[triv] (bRQC) -- (bQC);
	\draw[doub] (bAQC) -- (bQCd);
	\draw[triv] (bQCd) to[bend left=14] (bQC);
	\draw[thann] (bQC) to[bend left=14] (bQCd);
	\node[lbl,anchor=west] at (5.05,-5.6) {Cor \ref{coroThinAnnuli}};
	\node[lbl] at (3.85,-4.9) {(Ex.~\ref{exLadder})};
	\node[lbl] at (4.1,-5.15) {$\subsetneq$};
	\draw[doub] (bQCd.south) to[out=-150,in=-90]
		node[lbl,below]{Prop \ref{propoDoublingAllEquivalent}} (bAU.south);
	\draw[doub] (bQCd.south) to[out=-30,in=-90]
		node[lbl,below,pos=0.6]{Cor \ref{coroDoublingAllEquivalentDual}} (bRU.south);
	\draw[->,densely dotted] (bQC.north) to[out=50,in=130]
		node[lbl,above]{?\,(Rem.~\ref{remQCnotQCD})} (bRQC.north);

	\node[row] at (-0.6,-7.9)
		{(c) weak classes over restricted balls};
	\node[cls] (cRU)  at (1.6,-9.1) {$wRH_1^{U}$};
	\node[cls] (cRQC) at (5.3,-9.1) {$wRH_1^{QC}$};
	\node[cls] (cBL)  at (9.0,-9.1) {$wRH_1^{BL}$};
	\draw[triv] (cRU)  -- node[lbl,above]{H\"older} (cRQC);
	\draw[doub] (cRQC) -- (cBL);
	\draw[doub] (cBL.south) to[out=-120,in=-60]
		node[lbl,above]{Theorem \ref{theoAintro}} (cRU.south);

	\draw[triv] (0.6,-11.6) -- (1.3,-11.6);
	\node[lbl,anchor=west] at (1.4,-11.6) {immediate};
	\draw[doub] (3.7,-11.6) -- (4.4,-11.6);
	\node[lbl,anchor=west] at (4.5,-11.6) {$\mu$ doubling};
	\draw[thann] (7.0,-11.6) -- (7.7,-11.6);
	\node[lbl,anchor=west] at (7.8,-11.6) {$\mu$ doubling and thin annuli};
\end{tikzpicture}
\caption{What collapses, and under which hypothesis. An arrow is an inclusion;
its style records the hypothesis it needs, and its label the result that
proves it. Over cubes, row (a), there are exactly two classes, $QC$ and
$QC^{d}$ ---written $RH(\mu,\BB_{Q_0})$ and $A_\infty(\mu,\BB_{Q_0})$ in
Theorems \ref{theoEqualityRH} and \ref{theoEqualityA}---and the doubling of
$\nu$ is what separates them. Over balls, row
(b), the left-hand collapse survives but the right-hand one keeps the
superscript $^{d}$, and the two classes merge only under thin annuli;
Example \ref{exLadder} shows that without them they do not. Row (a) is
symmetric under the duality \rf{eqDualityWeights}, which reflects it about
$QC$ and exchanges $\mu$ with $\nu$; the asymmetry of the two dashed arrows is
exactly the failure of that reflection to preserve the doubling hypothesis.
Row (c) is the weak theory, where the closing arrow is Theorem
\ref{theoAintro}.}
\label{figImplications}
\end{figure}

\begin{remark}\label{remDMOwarning}
Outside the doubling setting nothing of the sort survives. Duoandikoetxea,
Mart\'in-Reyes and Ombrosi consider eight conditions of this kind over a
general basis, including $A_\infty^{U}$, $RH_1^{U}$ and $QC$, and show that for suitable sparse bases no two of them coincide
\cite[Cor.~6.2]{DuoandikoetxeaMartinReyesOmbrosi}. Related negative and positive results for particular bases are in
\cite{MudarraPerfekt} and in
\cite[Rem.~3.4]{HagelsteinLuqueParissis}.
\end{remark}

\section{The doubling case: dyadic equivalences}
\label{secDyadic}
In this section $\mu$ is $\DD$-doubling in a fixed $Q_0\in\DD\cup3\DD$, and we
show that $\nu\in A_\infty^{U}(\mu,\BB_{Q_0})$ if and only if
$\nu\in QC(\mu,\BB_{Q_0})$ and $\nu $ is $\mathcal{D}$-doubling in $Q_0$, and
that $\nu\in QC(\mu,\BB_{Q_0})$ if and only if
$\nu\in RH_1^{U}(\mu,\BB_{Q_0})$, that is, if and only if
$\nu\in RH_p(\mu,\BB_{Q_0})$ for some $p>1$. Over this basis there are
therefore only two classes, and \rf{eqDefQC} for a single pair
$(\delta_0,\ve_0)$ decides which, since $\delta_0$ large enough implies that
$\nu$ is doubling; see Lemma \ref{lemBothDoubling}.

To start, on the basis $\DD\cup3\DD$, and under a doubling hypothesis on $\mu$ alone,
the right-hand half of \rf{eqTrivialImplications} collapses.

\begin{theorem}\label{theoEqualityRH}
If $\mu$ is $\DD$-doubling in $Q_0\in\DD\cup3\DD$, then
\begin{equation}\label{eqEqualityBpDyadic}
	RH_1^{U}(\mu,\BB_{Q_0})=RH_1^{QC}(\mu,\BB_{Q_0})=QC(\mu,\BB_{Q_0}),
\end{equation}
and we write $RH(\mu,\BB_{Q_0})$ for this class.
\end{theorem}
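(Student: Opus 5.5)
By Lemma \ref{lemTrivialImplications} we already have $RH_1^{U}\subset RH_1^{QC}\subset QC$. It therefore suffices to prove $QC(\mu,\BB_{Q_0})\subset RH_1^{U}(\mu,\BB_{Q_0})$. Fix $\nu\in QC_{\delta,\ve}(\mu,\BB_{Q_0})$. By Remark \ref{remWMutual}, $d\nu=w\,d\mu$ with $0<w<\infty$ a.e. We must find $p>1$ and $C$ such that \rf{eqStrongRH} holds for every $B\in\BB_{Q_0}$. The plan is the classical Gehring-type argument: a level-set estimate obtained from the Calder\'on--Zygmund iteration, fed into a layer-cake summation. The key point is that the hypothesis \rf{eqDefQC} is applied only to sets of the form $R\cap U^{j+1}$ with $R\in\DD(Q_0)\cup3\DD(Q_0)$. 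Every $B\in\BB_{Q_0}$ is itself in $\DD(Q_0)\cup3\DD(Q_0)$, and by DN\ref{itemDN3} the measure $\mu$ is $\DD$-doubling in $B$. So we may work inside a fixed $Q=B$, and all stopping cubes are again in the basis.

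Fix $Q\in\BB_{Q_0}$ and choose $\tau<\delta$. Apply Lemma \ref{lemCZIteration} to $f=w$ on $Q$, with $\alpha_0=\avint_Q w\,d\mu$ and $\alpha_j=(C_\mu\tau^{-1})^j\alpha_0$. For $R\in\mathcal F^j$, CZ\ref{itemCZLayersByCubes} gives $\mu(R\cap U^{j+1})<\tau\mu(R)<\delta\mu(R)$. Hence \rf{eqDefQC} yields $\nu(R\cap U^{j+1})\le\ve\,\nu(R)$. Summing over $R\in\mathcal F^j$ and using CZ\ref{itemCZIhat} gives $\nu(U^{j+1})\le\ve\,\nu(U^j)$, so $\nu(U^j)\le\ve^j\nu(Q)$. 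This is the geometric decay on the $\nu$ side. On the $\mu$ side, CZ\ref{itemCZU} gives $\mu(U^j)\le\alpha_j^{-1}\nu(U^j)$, and by CZ\ref{itemCZG}--CZ\ref{itemCZZ} we have $w\le\alpha_j$ a.e.\ on $Q\setminus U^j$.

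Now write
$$\int_Q w^{p}\,d\mu=\int_{Q\setminus U^0}w^{p}\,d\mu+\sum_{j\ge0}\int_{U^j\setminus U^{j+1}}w^{p}\,d\mu .$$
The first term is at most $\alpha_0^{p-1}\nu(Q)$. Each term of the sum is at most $\alpha_{j+1}^{p-1}\nu(U^j)\le\alpha_0^{p-1}(C_\mu\tau^{-1})^{(j+1)(p-1)}\ve^j\nu(Q)$. Choose $p>1$ so close to $1$ that $(C_\mu\tau^{-1})^{p-1}\ve<1$; the series then converges. This gives $\avint_Q w^p\,d\mu\le C\alpha_0^{p-1}\avint_Q w\,d\mu=C\left(\avint_Qw\,d\mu\right)^p$, which is \rf{eqStrongRH} with constants depending only on $C_\mu,\delta,\ve$. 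Note that the case $\mathcal F^0=\{Q\}$ is allowed, since $\alpha_0\ge C_\mu^{-1}\avint_Q w$. Integrability of $w^p$ comes out of the same computation. Alternatively one could invoke Lemma \ref{lemDintro} with $p=1$, but the direct sum is simpler.

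The main obstacle is checking that each stopping cube $R$ is a legitimate element of $\BB_{Q_0}$, so that \rf{eqDefQC} can be applied to the pair $R\cap U^{j+1}\subset R$, together with the doubling input used in Lemma \ref{lemCZDecomposition} when $Q\in3\DD$ and the building blocks $R\in\NN(\cdot)$ play the role of parents. Here we use $\mathcal F_t\subset\bigcup_{k\ge k_0}\DD_k\cup\{Q\}$, with cubes contained in $Q\subset Q_0$, so $R\in\DD(Q_0)\cup\{Q\}\subset\BB_{Q_0}$. The hypothesis $t\ge C_\mu^{-1}\avint_Q f$, needed at every level, holds because $\alpha_j\ge\alpha_0$. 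Once this is in place the rest is routine.
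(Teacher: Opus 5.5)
Your proposal is correct and follows the paper's own route. The paper obtains the theorem from \rf{eqTrivialImplications} together with Theorem~\ref{theoQCinfty}. That proof runs the same Calder\'on--Zygmund iteration with $f=w$ and $\alpha_0=\avint_Q w\,d\mu$, derives $\nu(U^{j+1})\le\ve_0\,\nu(U^j)$ from CZ\ref{itemCZLayersByCubes} and \rf{eqDefQC}, and sums the same geometric series over the layers $U^j\setminus U^{j+1}$. The only differences are minor: the paper takes $\tau=\delta_0$ (allowed because CZ\ref{itemCZLayersByCubes} is strict), which gives the explicit range $p<p_0=1+\log(1/\ve_0)/\log(C_\mu/\delta_0)$, and your layer decomposition of $\int_Q w^p\,d\mu$ tacitly uses $\mu(\bigcap_j U^j)=0$, which is CZ\ref{itemCZLayers}.
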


This is immediate from \rf{eqTrivialImplications} and the following theorem,
which is where the Calder\'on--Zygmund iteration of \S\ref{secCZ} is used for
the first time.

\begin{theorem}\label{theoQCinfty}
Let $\mu$ and $\nu$ be Radon measures in $\R^d$ with
$\supp\nu\subset\supp\mu$, let $Q_0\in\DD\cup3\DD$, let $\mu$ be
$\DD$-doubling in $Q_0$ with constant $C_\mu$, and let $0<\delta_0,\ve_0<1$
be such that $\nu\in QC_{\delta_0,\ve_0}(\mu,\BB_{Q_0})$. Then the density
$w=\frac{d\nu}{d\mu}$ lies in $RH_p(\mu,\BB_{Q_0})$ whenever
\begin{equation}\label{eqPZero}
	1<p<p_0\coloneqq 1+\frac{\log(1/\ve_0)}{\log(C_\mu/\delta_0)} ,
\end{equation}
and, writing $p-1=\theta\,(p_0-1)$ with $0<\theta<1$,
$$\left(\avint_{Q}w^p\, d\mu\right)^{\frac1p}
\leq\left(1+\frac{\ve_0^{-\theta}}{1-\ve_0^{1-\theta}}\right)^{\frac1p}
\avint_{Q}w\, d\mu\qquad\text{for every }Q\in\BB_{Q_0} .$$
\end{theorem}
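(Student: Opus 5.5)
Fix $Q\in\BB_{Q_0}$. By Remark \ref{remWMutual}, $\nu\in QC_{\delta_0,\ve_0}(\mu,\BB_{Q_0})$ gives $\nu|_{Q_0}\ll\mu|_{Q_0}$ and $w>0$ $\mu$-a.e. Since $\nu$ is Radon, $w\in L^1(\mu|_Q)$. I would run the Calder\'on--Zygmund iteration of Lemma \ref{lemCZIteration} on $f=w$ over $Q$ with the following choices:
$$\tau\coloneqq\delta_0,\qquad \alpha_0\coloneqq\avint_Q w\,d\mu,\qquad \alpha_j=(C_\mu/\delta_0)^j\alpha_0 .$$
The hypothesis $\alpha_0\geq C_\mu^{-1}\avint_Q w\,d\mu$ is trivially satisfied. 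We obtain sets $U^j=\bigcup_{R\in\mathcal F^j}R$, $G^j$ and $Z^j$. The exceptional sets $Z^j$ are $\mu$-null, hence $\nu$-null, so they can be ignored throughout.

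The key step is to transfer the geometric decay CZ\ref{itemCZLayersByCubes} from $\mu$ to $\nu$. Every $R\in\mathcal F^j$ is either $Q$ itself or a dyadic cube contained in $Q\subset Q_0$, so $R\in\BB_{Q_0}$. By CZ\ref{itemCZLayersByCubes}, $\mu(R\cap U^{j+1})<\delta_0\,\mu(R)$. Applying \rf{eqDefQC} with $E=R\cap U^{j+1}\subset R$ gives
$$\nu(R\cap U^{j+1})\leq\ve_0\,\nu(R).$$
By CZ\ref{itemCZIhat}, $U^{j+1}\subset U^j$, and the cubes of $\mathcal F^j$ are disjoint. Summing over $R\in\mathcal F^j$ therefore yields $\nu(U^{j+1})\leq\ve_0\,\nu(U^j)$, and iterating, $\nu(U^j)\leq\ve_0^{j}\nu(Q)$. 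The one point needing care is that the cubes produced by the decomposition really belong to the basis on which the $QC$ hypothesis is assumed. This includes the case $Q\in3\DD(Q_0)$, where the building blocks of $Q$ are cubes of $\DD(Q_0)$.

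Next I would split $Q=(Q\setminus U^0)\cup\bigcup_{j\geq0}(U^j\setminus U^{j+1})$ up to $\mu$-null sets. This is legitimate because $\mu(U^j)\to0$ by CZ\ref{itemCZLayers}. By CZ\ref{itemCZG} at level $\alpha_{j+1}$, we have $w\leq\alpha_{j+1}$ $\mu$-a.e. on $U^j\setminus U^{j+1}$, and $w\leq\alpha_0$ a.e. off $U^0$. Writing $\int w^p\,d\mu=\int w^{p-1}\,d\nu$ on each piece gives
$$\int_Q w^p\,d\mu\leq\alpha_0^{p-1}\nu(Q)+\sum_{j\geq0}\alpha_{j+1}^{p-1}\nu(U^j)
\leq\alpha_0^{p-1}\nu(Q)\Bigl(1+\sum_{j\geq0}(C_\mu/\delta_0)^{(j+1)(p-1)}\ve_0^{j}\Bigr).$$
By \rf{eqPZero}, $(p-1)\log(C_\mu/\delta_0)=\theta\log(1/\ve_0)$, that is, $(C_\mu/\delta_0)^{p-1}=\ve_0^{-\theta}$. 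The series then equals $\ve_0^{-\theta}\sum_j\ve_0^{j(1-\theta)}=\ve_0^{-\theta}/(1-\ve_0^{1-\theta})$, which converges precisely because $\theta<1$, i.e.\ $p<p_0$.

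Finally, I divide by $\mu(Q)$. Since $\nu(Q)/\mu(Q)=\alpha_0$, this gives
$$\avint_Q w^p\,d\mu\leq\Bigl(1+\frac{\ve_0^{-\theta}}{1-\ve_0^{1-\theta}}\Bigr)\alpha_0^{p},$$
which is the claimed estimate after taking $p$-th roots. Membership $w\in RH_p(\mu,\BB_{Q_0})$ follows, with $w\in L^p_{\loc}$ and $w>0$ a.e. as already noted.
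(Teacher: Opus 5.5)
Your proposal is correct and is the paper's proof essentially line for line: the same iteration of Lemma~\ref{lemCZIteration} with $\tau=\delta_0$ and $\alpha_0=\avint_Q w\,d\mu$, the same transfer of CZ\ref{itemCZLayersByCubes} to $\nu$ via \rf{eqDefQC} giving $\nu(U^{j+1})\leq\ve_0\,\nu(U^j)$, and the same layer decomposition leading to the geometric series of ratio $\ve_0^{1-\theta}$. The one thing you add is the appeal to Remark~\ref{remWMutual} for $w>0$ $\mu$-a.e., and over $\DD\cup3\DD$ that part of the remark can fail: with $\mu$ Lebesgue measure and the standard grid, $\nu=\chi_{[1/2,1)}\,\mu$ lies in $QC_{\ve/3,\ve}(\mu,\BB_{[0,1)})$, so what you and the paper actually prove is the reverse H\"older inequality itself, not the positivity clause in the definition of $RH_p$.
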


\begin{proof}
Fix $Q\in\BB_{Q_0}=\DD(Q_0)\cup3\DD(Q_0)$ and apply Lemma
\ref{lemCZIteration} with $f=w$, with
$\alpha_0=\avint_{Q}w\, d\mu=\frac{\nu(Q)}{\mu(Q)}$ and with
$\tau=\delta_0$, so that $\alpha_j=\left(C_\mu\delta_0^{-1}\right)^{j}\alpha_0$.
For $R\in\mathcal F^j$, property CZ\ref{itemCZLayersByCubes} reads
$\mu(R\cap U^{j+1})<\delta_0\,\mu(R)$, so \rf{eqDefQC} applied to
$E=R\cap U^{j+1}\subset R\in\BB_{Q_0}$ gives
$\nu(R\cap U^{j+1})\leq\ve_0\,\nu(R)$; summing over the disjoint family
$\mathcal F^j$,
\begin{equation}\label{eqNuLayers}
	\nu(U^{j+1})\leq\ve_0\,\nu(U^{j})\leq\cdots\leq\ve_0^{\,j+1}\nu(U^{0}) .
\end{equation}
By CZ\ref{itemCZZ} the sets $Q\setminus U^0$ and $U^j\setminus U^{j+1}$,
$j\geq0$, cover $Q$ up to a set that is null for both measures, and on each of
them CZ\ref{itemCZG} bounds $w$ by $\alpha_0$ and by $\alpha_{j+1}$
respectively. Hence
$$\begin{aligned}
\int_{Q}w^p\, d\mu
&=\int_{Q\setminus U^{0}}w^{p-1}w\, d\mu
+\sum_{j\geq0}\int_{U^{j}\setminus U^{j+1}}w^{p-1}w\, d\mu\\
&\leq\alpha_0^{p-1}\nu(Q\setminus U^{0})
+\sum_{j\geq0}\alpha_{j+1}^{p-1}\,\nu(U^{j})\\
&\by{\rf{eqNuLayers}}\leq\alpha_0^{p-1}
\left(1+\sum_{j\geq0}\left(C_\mu\delta_0^{-1}\right)^{(j+1)(p-1)}
\ve_0^{\,j}\right)\nu(Q) .
\end{aligned}$$
Now $p-1=\theta(p_0-1)$ is exactly
$\left(C_\mu\delta_0^{-1}\right)^{p-1}=\ve_0^{-\theta}$, so the series is
geometric of ratio $\ve_0^{1-\theta}<1$ and sums to
$\ve_0^{-\theta}\left(1-\ve_0^{1-\theta}\right)^{-1}$. Since
$\alpha_0=\frac{\nu(Q)}{\mu(Q)}$, dividing by $\mu(Q)$ gives the stated
inequality.
\end{proof}

Dualizing \rf{eqEqualityBpDyadic} by means of \rf{eqDualityWeights} requires
keeping track of which of the two measures is doubling, and this is what the
following notation is for. For
$\CC\in\{A_p,A_\infty^{U},A_\infty^{QC},QC,QC_{\delta,\ve}\}$ and
$Q_0\in\DD\cup3\DD$ we write
$$\CC^{d}(\mu,\BB_{Q_0})\coloneqq
\{\nu\in \CC(\mu,\BB_{Q_0}):\, \nu\text{ is }\DD\text{-doubling in }Q_0\} .$$
With this notation, applying \rf{eqDualityWeights} to \rf{eqEqualityBpDyadic}
gives, for $\mu$ $\DD$-doubling in $Q_0$,
\begin{equation}\label{eqEqualityApDyadic}
	A_\infty^{U,d}(\mu,\BB_{Q_0})=A_\infty^{QC,d}(\mu,\BB_{Q_0})
	=QC^{d}(\mu,\BB_{Q_0}).
\end{equation}
The superscript $^{d}$ can then be removed from the middle term, and this is
the point at which the two halves of \rf{eqTrivialImplications} cease to be
mirror images: a weight in $A_\infty^{QC}$ is automatically doubling, whereas
one in $RH_1^{QC}$ need not be.

\begin{lemma}\label{lemBothDoubling}
Let $\mu$ be a Radon measure in $\R^d$, $\DD$-doubling in
$Q_0\in\DD\cup3\DD$ with constant $C_\mu$, and let
$\nu\in QC_{\delta_0,\ve_0}(\mu,\BB_{Q_0})$ with
$1-C_\mu^{-1}<\delta_0<1$ and $0<\ve_0<1$. Then $\nu$ is $\DD$-doubling in
$Q_0$, with constant $(1-\ve_0)^{-1}$. In particular
$$A_\infty^{QC,d}(\mu,\BB_{Q_0})=A_\infty^{QC}(\mu,\BB_{Q_0}).$$
\end{lemma}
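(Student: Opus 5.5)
The plan is to verify the $\DD$-doubling inequality of Definition \ref{defDoublingConstant} for $\nu$ in its equivalent form \rf{eqDoublingComplement}, and to obtain it by applying \rf{eqDefQC} to the complement of a single neighbour. Fix $Q\in\DD(Q_0)$ and $R\in\NN(Q)\cap\DD(Q_0)$, and set $B\coloneqq 3Q\cap Q_0\in3\DD(Q_0)\subset\BB_{Q_0}$ and $E\coloneqq B\setminus R$. Since $R\subset B$, $E$ is a Borel subset of $B$.

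Since $\mu$ is $\DD$-doubling in $Q_0$, Lemma \ref{lemDoublingNeighbors} gives
$$\mu(E)\leq\left(1-\frac1{C_\mu}\right)\mu(B)<\delta_0\,\mu(B),$$
where the strict inequality uses $\delta_0>1-C_\mu^{-1}$ and requires $\mu(B)>0$. This holds because $B$ contains $R\supset\Delta_R$, a restricted ball centred at a point of $X=\supp\mu$. The hypothesis $\nu\in QC_{\delta_0,\ve_0}(\mu,\BB_{Q_0})$ then yields $\nu(E)\leq\ve_0\nu(B)$. Because $\nu$ is Radon and $B$ is bounded, $\nu(B)<\infty$, so we may subtract to get
$$\nu(R)=\nu(B)-\nu(E)\geq(1-\ve_0)\,\nu(B).$$
If $\nu(R)=0$ this forces $\nu(B)=0$; otherwise $\nu(B)/\nu(R)\leq(1-\ve_0)^{-1}$. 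Taking the supremum over $R$ and $Q$ shows that $\nu$ is $\DD$-doubling in $Q_0$ with constant $(1-\ve_0)^{-1}$, with the convention that $0/0$ is harmless. Equivalently, \rf{eqDoublingComplement} holds for $\nu$ with $C_\nu=(1-\ve_0)^{-1}$.

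For the identity $A_\infty^{QC,d}=A_\infty^{QC}$, the inclusion $\subset$ holds by definition. For the reverse, let $\nu\in A_\infty^{QC}(\mu,\BB_{Q_0})=\bigcap_\delta\bigcup_\ve QC_{\delta,\ve}$. Choose any $\delta_0\in(1-C_\mu^{-1},1)$. By definition there is $\ve_0<1$ with $\nu\in QC_{\delta_0,\ve_0}$, and the first part then shows that $\nu$ is $\DD$-doubling.

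The only delicate point is degeneracy: we need $\mu(B)>0$ for the strict inequality, and some convention when $\nu(R)=0$. By Remark \ref{remWMutual}, $\supp\nu=\supp\mu$, so in fact $\nu(R)>0$ and the issue disappears. Nothing else should be an obstacle.
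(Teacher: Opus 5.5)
Your proof is correct and is the paper's argument. You apply \rf{eqDefQC} to $E=3Q\cap Q_0\setminus R$ inside $B=3Q\cap Q_0$, using \rf{eqDoublingComplement} and $\delta_0>1-C_\mu^{-1}$, and for the identity you pick any $\delta_0\in(1-C_\mu^{-1},1)$.

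Your extra bookkeeping ($\mu(B)>0$, $\nu(B)<\infty$) is a welcome addition. The appeal to Remark \ref{remWMutual} for $\nu(R)>0$ is not quite right, since $\nu\equiv0$ lies in every $QC_{\delta,\ve}$. It is your own observation that $\nu(R)=0$ forces $\nu(3Q\cap Q_0)=0$ that actually disposes of the degenerate case.
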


\begin{proof}
By Lemma \ref{lemDoublingNeighbors}, $\mu$ being $\DD$-doubling in $Q_0$ means
that $\mu(3Q\cap Q_0\setminus R)\leq(1-C_\mu^{-1})\mu(3Q\cap Q_0)$ for every
$Q\in\DD(Q_0)$ and every $R\in\NN(Q)\cap\DD(Q_0)$. Since
$\delta_0>1-C_\mu^{-1}$, condition \rf{eqDefQC} applied to
$E=3Q\cap Q_0\setminus R$ inside $B=3Q\cap Q_0\in\BB_{Q_0}$ gives
$$\nu(3Q\cap Q_0\setminus R)\leq\ve_0\,\nu(3Q\cap Q_0),$$
that is, $\nu(3Q\cap Q_0)\leq(1-\ve_0)^{-1}\nu(R)$. The last assertion follows
by choosing any $\delta_0\in\left(1-C_\mu^{-1},1\right)$ in the definition of
$A_\infty^{QC}$.
\end{proof}

Combining \rf{eqEqualityApDyadic}, Lemma \ref{lemBothDoubling} and
\rf{eqTrivialImplications} we obtain the left-hand counterpart of Theorem
\ref{theoEqualityRH}.

\begin{theorem}\label{theoEqualityA}
If $\mu$ is $\DD$-doubling in $Q_0\in\DD\cup3\DD$, then
\begin{equation}\label{eqBothDoubling}
	A_\infty^{U}(\mu,\BB_{Q_0})=A_\infty^{QC}(\mu,\BB_{Q_0})
	=QC^{d}(\mu,\BB_{Q_0}),
\end{equation}
and we write $A_\infty(\mu,\BB_{Q_0})$ for this class.
\end{theorem}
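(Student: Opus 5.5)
The plan is to chain the inclusions already available and close the loop with the one missing inclusion, $QC^{d}\subset A_\infty^{U}$. The containment $A_\infty^{U}(\mu,\BB_{Q_0})\subset A_\infty^{QC}(\mu,\BB_{Q_0})$ is the leftmost inclusion of \rf{eqTrivialImplications}, which holds for any Radon $\mu$. Next, $A_\infty^{QC}\subset QC^{d}$ over $\BB_{Q_0}$. By definition $A_\infty^{QC}\subset QC$. For doubling, fix any $\delta_0\in(1-C_\mu^{-1},1)$; the definition of $A_\infty^{QC}$ provides some $\ve_0<1$ with $\nu\in QC_{\delta_0,\ve_0}$. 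Lemma \ref{lemBothDoubling} then gives that $\nu$ is $\DD$-doubling in $Q_0$, with constant $(1-\ve_0)^{-1}$.

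It remains to show $QC^{d}(\mu,\BB_{Q_0})\subset A_\infty^{U}(\mu,\BB_{Q_0})$, which is the content of \rf{eqEqualityApDyadic}. I would prove it by duality, as that display indicates. Let $\nu\in QC_{\delta,\ve}(\mu,\BB_{Q_0})$ be $\DD$-doubling in $Q_0$. The second line of \rf{eqDualityWeights} gives $\mu\in QC_{1-\ve,1-\delta}(\nu,\BB_{Q_0})$. Now apply Theorem \ref{theoQCinfty} with the roles of the two measures exchanged. This requires three checks.
\begin{enumerate}[(i)]
\item The new reference measure $\nu$ is $\DD$-doubling in $Q_0$; this is exactly the extra hypothesis in $QC^{d}$.
\item $\supp\mu\subset\supp\nu$; Remark \ref{remWMutual} gives $\supp\mu=\supp\nu$ on $Q_0$.
\item $\mu\ll\nu$ with density $w^{-1}$ on $Q_0$; this also follows from Remark \ref{remWMutual}, since $0<w<\infty$ a.e.
\end{enumerate}
Theorem \ref{theoQCinfty} then yields $w^{-1}\in RH_{s}(\nu,\BB_{Q_0})$ for some $s>1$, that is, $\mu\in RH_s(\nu,\BB_{Q_0})$. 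The first line of \rf{eqDualityWeights}, with $p=s'$, turns this into $\nu\in A_{s'}(\mu,\BB_{Q_0})\subset A_\infty^{U}$.

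The main point needing care is applying Theorem \ref{theoQCinfty} to the pair $(\nu,\mu)$ rather than $(\mu,\nu)$. In particular, the Calderón--Zygmund machinery of Lemmas \ref{lemCZDecomposition}--\ref{lemCZIteration} only uses the doubling of the reference measure and the dyadic differentiation theorem, Lemma \ref{lemDyadicLebesgue}. That theorem holds for any Radon measure, so it is available for $\nu$. I would also confirm that the local integrability assumption built into $A_p$, namely $w^{1/(1-p)}\in L^1_{\loc}(\mu)$, follows from the reverse Hölder inequality for $w^{-1}$ with respect to $\nu$, because $\int w^{-s}\,d\nu=\int w^{1-s}\,d\mu$.

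Combining the three inclusions gives $A_\infty^{U}\subset A_\infty^{QC}\subset QC^{d}\subset A_\infty^{U}$, which is \rf{eqBothDoubling}.
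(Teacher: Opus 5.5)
Your proof is correct and is essentially the paper's argument. The paper obtains \rf{eqBothDoubling} by combining three ingredients: the trivial inclusions \rf{eqTrivialImplications}, Lemma \ref{lemBothDoubling}, and \rf{eqEqualityApDyadic}. The last of these is exactly the duality \rf{eqDualityWeights} applied to Theorem \ref{theoEqualityRH}, that is, Theorem \ref{theoQCinfty} with the roles of $\mu$ and $\nu$ exchanged and the doubling measure $\nu$ as reference. Your explicit checks of the support, absolute continuity and $L^1_{\loc}$ hypotheses in that swapped application fill in details the paper leaves implicit.
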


The two classes are different, and one need not leave the Lebesgue measure to
see it.

\begin{example}\label{exStaircase}
Let $\mu$ be the Lebesgue measure in $\R$, let $\DD$ be the standard dyadic
grid, let $Q_0=[0,1)$ and put
$$I_0\coloneqq \left[0,\tfrac12\right),\quad
I_k\coloneqq \left[1-2^{-k},1-2^{-k-1}\right)\ \ (k\geq1),\quad
w\coloneqq \sum_{k\geq0}2^{-k^{2}}\chi_{I_k} .$$
Then $w>0$ and $\nu=w\mu$ lies in $QC(\mu,\BB_{Q_0})$, hence in
$RH_1^{U}(\mu,\BB_{Q_0})$ by Theorem \ref{theoEqualityRH}, but $\nu$ is not
$\DD$-doubling in $Q_0$, so $\nu\notin A_\infty^{U}(\mu,\BB_{Q_0})$ by Theorem
\ref{theoEqualityA}. Let us check these two facts.

Write $J_m\coloneqq \left[1-2^{-m},1\right)=\bigcup_{j\geq m}I_j$, so that
$J_m\in\DD_m$ has children $I_m$ and $J_{m+1}$, and the ancestors of $I_k$ are
exactly $J_k,J_{k-1},\dots,J_0$. Consequently every $Q\in\DD(Q_0)$ either is
contained in some $I_k$, where $w$ is constant, or equals some $J_m$, where
$$\sup_{J_m}w=2^{-m^{2}}\qquad\text{and}\qquad
\avint_{J_m}w\, d\mu\geq2^{m}\,2^{-m^{2}}\left|I_m\right|=2^{-m^{2}-1} ;$$
in both cases $\sup_Qw\leq2\avint_Qw\, d\mu$. If now
$B=3Q\cap Q_0\in3\DD(Q_0)$ with $Q\in\DD_m$, then $B$ is a union of at most
$N=N(\lambda)$ cubes of $\DD_m$ by Remark \ref{remAdjacentSystems}, and
applying the previous bound to the one on which $w$ is largest gives
$\sup_Bw\leq2N\avint_Bw\, d\mu$. So $w\leq2N\avint_Bw\, d\mu$ on every
$B\in\BB_{Q_0}$, whence
$$\mu(E)<\delta\,\mu(B)\implies\nu(E)\leq2N\delta\,\nu(B)
\qquad\text{for every Borel }E\subset B\in\BB_{Q_0},$$
which is \rf{eqDefQC} with $\ve=2N\delta$, admissible for $\delta<(2N)^{-1}$.

On the other hand $\nu(J_m)=\sum_{j\geq m}2^{-j^{2}-j-1}\leq2^{-m^{2}-m}$,
while $\widehat{J_m}=J_{m-1}$ contains $I_{m-1}$, so
$$\frac{\nu(\widehat{J_m})}{\nu(J_m)}
\geq\frac{2^{-(m-1)^{2}}\left|I_{m-1}\right|}{2^{-m^{2}-m}}
=2^{\,2m-2}\xrightarrow[m\to\infty]{}\infty ,$$
and DN\ref{itemDN2} rules out
$\DD$-doubling. That $w$ lies in no $A_p$ can also be seen directly: since $w$
decays superexponentially along the $I_j$, the average
$\avint_{J_m}w^{-\frac1{p-1}}\, d\mu$ is infinite for every $p>1$.
\end{example}

\begin{figure}[ht]
\centering
\begin{tikzpicture}[
	scale=1.0,
	lbl/.style={font=\scriptsize,inner sep=1.5pt},
	drop/.style={densely dotted,gray}
]
	\draw[->,gray] (-0.35,0) -- (9.55,0);
	\node[lbl,gray,anchor=north] at (9.55,-0.08) {$x$};
	\draw[->,gray] (0,-0.05) -- (0,5.75);
	\node[lbl,gray,anchor=east] at (-0.08,5.7) {$w$};
	\foreach \x/\t in {4.5/{$\frac12$},6.75/{$\frac34$},7.875/{$\frac78$},9/{$1$}}
		{\draw[gray] (\x,0) -- (\x,-0.1);
		 \node[lbl,anchor=north] at (\x,-0.1) {\t};}
	\node[lbl,anchor=north east] at (0,-0.1) {$0$};
	\draw[very thick] (0,5.3) -- (4.5,5.3);
	\draw[very thick] (4.5,5.0) -- (6.75,5.0);
	\draw[very thick] (6.75,4.1) -- (7.875,4.1);
	\draw[very thick] (7.875,2.6) -- (8.4375,2.6);
	\draw[very thick] (8.4375,0.5) -- (8.71875,0.5);
	\draw[drop] (4.5,5.0) -- (4.5,5.3);
	\draw[drop] (6.75,4.1) -- (6.75,5.0);
	\draw[drop] (7.875,2.6) -- (7.875,4.1);
	\draw[drop] (8.4375,0.5) -- (8.4375,2.6);
	\draw[drop] (8.71875,0.1) -- (8.71875,0.5);
	\node[lbl,anchor=south] at (2.25,5.33) {$1$};
	\node[lbl,anchor=south] at (5.625,5.03) {$2^{-1}$};
	\node[lbl,anchor=south] at (7.31,4.13) {$2^{-4}$};
	\node[lbl,anchor=south] at (8.16,2.63) {$2^{-9}$};
	\node[lbl,anchor=south] at (8.58,0.53) {$2^{-16}$};
	\draw[|-|] (0.03,0.3) -- (4.47,0.3);
	\node[lbl,anchor=south] at (2.25,0.33) {$I_0$};
	\draw[|-|] (4.53,0.3) -- (6.72,0.3);
	\node[lbl,anchor=south] at (5.625,0.33) {$I_1$};
	\draw[|-|] (6.78,0.3) -- (7.845,0.3);
	\node[lbl,anchor=south] at (7.31,0.33) {$I_2$};
	\draw[|-|] (4.53,-0.85) -- (8.97,-0.85);
	\node[lbl,anchor=north] at (6.75,-0.88) {$J_1$};
	\draw[|-|] (6.78,-1.5) -- (8.97,-1.5);
	\node[lbl,anchor=north] at (7.875,-1.53) {$J_2$};
	\node[lbl,anchor=west] at (0.45,3.5)
		{$J_m=I_m\cup J_{m+1} ,\qquad \widehat{J_m}=J_{m-1} ,$};
	\node[lbl,anchor=west] at (0.45,2.6)
		{$\dfrac{\nu(\widehat{J_m})}{\nu(J_m)}
		 \geq\dfrac{2^{-(m-1)^{2}}\left|I_{m-1}\right|}{\nu(J_m)}
		 \geq2^{\,2m-2}\xrightarrow[\ m\to\infty\ ]{}\infty .$};
\end{tikzpicture}
\caption{Example \ref{exStaircase}: the weight $w=\sum_k2^{-k^{2}}\chi_{I_k}$,
drawn on a logarithmic vertical scale. On each $I_k$, and on each $J_m$, the supremum of $w$ is comparable
to its average, which is what puts $\nu$ in $QC(\mu,\BB_{Q_0})$ and hence in
$RH_p(\mu,\BB_{Q_0})$ for some $p>1$. Doubling fails for the opposite reason:
passing from $J_m$ to its parent $J_{m-1}$ adds the interval $I_{m-1}$, which
is twice as long and carries $2^{\,2m-1}$ times the density, so the step up is
unbounded. By Theorems \ref{theoEqualityRH} and \ref{theoEqualityA}, this is
exactly a member of $RH(\mu,\BB_{Q_0})\setminus A_\infty(\mu,\BB_{Q_0})$.}
\label{figStaircase}
\end{figure}

\begin{remark}
The mechanism of the preceding example is the one isolated in Remark \ref{remNoDyadicAnalogue} below:
here \rf{eqDefQC} holds only for small $\delta$, and a small $\delta$ cannot
see the single large step from $J_m$ to its parent.

The construction can be replicated for an arbitrary Radon measure $\mu$, as long as the measure of a cube is not concentrated on a particular child (to be precise, we need $\mu(Q_m)\leq (1-1/m) \mu(\hat Q_m)$). It may fail, for instance, when $\mu$ has point masses.
\end{remark}

\section{From cubes to balls}
\label{secBalls}

The equivalences of \S\ref{secDyadic} are statements about the basis
$\DD\cup3\DD$ inside a fixed $Q_0$. Transferring them to restricted balls
costs us the room between a cube and the balls inscribed in and circumscribed
about it, and this is why the resulting statements hold on the whole of $X$
rather than on a fixed $Q_0$. Throughout this section $\mu$ is doubling, and
for any of the classes $\CC$ of \S\ref{secClasses} we write
$$\CC^{d}(\mu)\coloneqq \{\nu\in\CC(\mu):\, \nu\text{ is doubling}\},$$
which is consistent with the notation of \S\ref{secDyadic} because $\nu$ is
doubling if and only if it is $\DD$-doubling in every $Q_0\in\DD$.

\begin{remark}\label{remDimensionalConstant}
Property D\ref{itemD4} attaches to each $Q\in\DD$ restricted balls
$\Delta_Q\subset Q\subset B_Q\cap X$ with $r(B_Q)=A_0\,r(\Delta_Q)$. A routine
verification from N\ref{eqrem3QNeighbors2} produces the same for every
$Q\in3\DD$ and every $Q\in3\DD(Q_0)$, at the cost of \emph{increasing} $A_0$
to a dimensional constant $\wt A_0$ with
$\wt A_0^{-1}B_Q\cap X\subset\Delta_Q$. From now on $A_0$ denotes the enlarged
constant.
\end{remark}

Fix $Q_0\in\DD\cup3\DD$. On the one hand there is a relatively open set
$V_{Q_0}\supset Q_0$ of comparable diameter containing $B_Q\cap X$ for every
$Q\in\BB_{Q_0}$. On the other hand every restricted ball $\Delta\subset Q_0$
is contained in a cube $\wt Q_\Delta\in3\DD(Q_0)$ with
$r(\wt Q_\Delta)\approx r(\Delta)$, and contains a cube $Q_\Delta\in\DD(Q_0)$
with $\ell(Q_\Delta)\approx r(\Delta)$. Since $\mu$ is doubling, there is a
constant $\widehat C_\mu$ (not to be confused with the constant
$\wt C_\mu$ of \rf{eqEnlargedBallDoubling}) such that
\begin{equation}\label{eqDoublingConstantsModified}
\begin{aligned}
	\widehat C_\mu^{-1}\,\mu(B_Q)  \leq &\, \mu(Q)\leq \widehat C_\mu\,\mu(\Delta_Q)\\
&\text{and}\\
	\widehat C_\mu^{-1}\,\mu(\wt Q_\Delta)\leq &\, \mu(\Delta)\leq \widehat C_\mu\, \mu(Q_\Delta)
\end{aligned}	
\end{equation}
for every $Q\in\BB_{Q_0}$ and every restricted ball $\Delta\subset Q_0$, see Remark
\ref{remDimensionalConstant}.

\begin{lemma}\label{lemApDyadicornot}
Let $\mu$ be a doubling Radon measure in $\R^d$, let $Q_0\in\DD\cup3\DD$ and
let $1<p<\infty$. Then
$$A_p(\mu,V_{Q_0})\subset A_p(\mu,\BB_{Q_0})\subset A_p(\mu,\mathring Q_0),$$
and the same with $RH_p$ in place of $A_p$. In particular
$A_p(\mu)=A_p(\mu,\BB_{\mathcal{D}})$ and $RH_p(\mu)=RH_p(\mu,\BB_{\mathcal{D}})$.
\end{lemma}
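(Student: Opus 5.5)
The plan is to compare the two bases set by set, using the inscribed and circumscribed sets recorded in \rf{eqDoublingConstantsModified}. Both characteristics involve only averages: for $A_p$ the product $\avint_B w\,\bigl(\avint_B w^{\frac1{1-p}}\bigr)^{p-1}$, for $RH_p$ the ratio $\bigl(\avint_B w^p\bigr)^{1/p}/\avint_B w$. If $B\subset B'$ with $\mu(B')\leq C\mu(B)$, then for any $g\geq0$
$$\avint_B g\,d\mu\leq\frac{\mu(B')}{\mu(B)}\avint_{B'}g\,d\mu\leq C\avint_{B'}g\,d\mu .$$
Thus a bound on $B'$ transfers to $B$: the upper averages on $B$ are controlled by those on $B'$. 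For $A_p$ both averages appear as upper terms, giving a constant $C^{p}$. For $RH_p$ we also need the lower average $\avint_{B'}w\lesssim\avint_B w$, which is \emph{not} automatic. This asymmetry is the main issue.

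For the first inclusion, take $Q\in\BB_{Q_0}$ and use $Q\subset B_Q\cap X$. This is a restricted ball contained in $V_{Q_0}$ by construction, and $\mu(B_Q)\leq\widehat C_\mu\mu(Q)$. For $A_p$ the principle above immediately gives $[w]_{A_p(\mu,\BB_{Q_0})}\leq\widehat C_\mu^{\,p}[w]_{A_p(\mu,V_{Q_0})}$. For $RH_p$, I would first bound the $w^p$ average on $Q$ by that on $B_Q\cap X$, then apply $RH_p$ on the ball. It remains to bound $\avint_{B_Q\cap X}w$ by $\avint_Q w$. For this I would use that $w\in RH_p$ over balls implies $w\in QC$ over balls (Lemma \ref{lemTrivialImplications}, last assertion), together with a doubling of $\nu$ at comparable scales. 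I would try to get that doubling from $QC$ with $\delta$ close to $1$: apply \rf{eqDefQCbis} to $E=\Delta_Q\subset B_Q\cap X$. But this is only available when $\mu(\Delta_Q)>(1-\delta)\mu(B_Q)$, which fails in general.

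So I would instead argue via the reverse Hölder inequality itself. On the ball $\Delta'=B_Q\cap X$, Hölder gives $\nu(F)\leq C_1\,(\mu(F)/\mu(\Delta'))^{1/p'}\nu(\Delta')$ for $F\subset\Delta'$. I would take $F=\Delta'\setminus Q$. This yields $\nu(Q)\geq\frac12\nu(\Delta')$, provided $\mu(\Delta'\setminus Q)$ is a small fraction of $\mu(\Delta')$. That need not hold directly. The fix I would try is to cover $Q$ by finitely many small restricted balls centred in $\Delta_Q$, shrink scales step by step, and chain the comparison. Honestly, I expect this step, the lower bound $\nu(B_Q\cap X)\lesssim\nu(Q)$ for $RH_p$ weights, to be the main obstacle; it may require the weak-type dilation already built into the statement of Lemma \ref{lemApDyadicornot} (i.e.\ working in $V_{Q_0}$, which is larger than $Q_0$). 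If chaining fails, I would use Theorem \ref{theoQCinfty} run on balls to get $RH_\infty$-like level-set control.

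The second inclusion is symmetric. Take a restricted ball $\Delta\subset\mathring Q_0$ and use $\Delta\subset\wt Q_\Delta\in3\DD(Q_0)$ with $\mu(\wt Q_\Delta)\leq\widehat C_\mu\mu(\Delta)$. For $A_p$ the upper averages transfer as before. For $RH_p$, the missing lower bound $\nu(\wt Q_\Delta)\lesssim\nu(\Delta)$ now follows from the dyadic theory: $\Delta\supset Q_\Delta$, and by Theorem \ref{theoEqualityRH} the weight is in $QC(\mu,\BB_{Q_0})$. The required comparability of $\nu$ on $Q_\Delta$ and $\wt Q_\Delta$ I would extract from iterating the CZ stopping, as in the proof of Theorem \ref{theoQCinfty}.

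Finally, for the global statements, exhaust $X$ by cubes $Q_0$, where $\mathring Q_0$ and $V_{Q_0}$ both exhaust $X$, and note that all constants are uniform in $Q_0$. This gives $A_p(\mu)=A_p(\mu,\BB_\DD)$ and likewise for $RH_p$.
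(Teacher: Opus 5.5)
Your $A_p$ argument is exactly the paper's: $B\subset B'$ with $\mu(B')\leq\widehat C_\mu\,\mu(B)$ bounds both averages over $B$ from above, giving the constant $\widehat C_\mu^{\,p}$. For $RH_p$ the paper only says ``in the same way''. You are right that this hides a real problem. The right-hand side $\avint_B w\,d\mu$ must be bounded \emph{below} by $\avint_{B'}w\,d\mu$, i.e.\ one needs $\nu(B')\lesssim\nu(B)$, which is a doubling-type property of $\nu$.

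Your repair for the second inclusion does not work. Membership in $QC(\mu,\BB_{Q_0})$ gives no comparability between $\nu(Q_\Delta)$ and $\nu(\wt Q_\Delta)$: Lemma \ref{lemBothDoubling} needs $\delta_0>1-C_\mu^{-1}$, and Example \ref{exStaircase} lies in $QC(\mu,\BB_{Q_0})$ without being $\DD$-doubling. No stopping-time iteration can supply it, because the inclusion $RH_p(\mu,\BB_{Q_0})\subset RH_p(\mu,\mathring Q_0)$ is false.

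Take the weight $w$ of Example \ref{exStaircase}, which satisfies $\sup_Bw\leq2N\avint_Bw\,d\mu$ on every $B\in\BB_{Q_0}$. Take also the interval $\Delta_m\coloneqq(1-2^{-m}-2^{-3m},\,1-2^{-m-1})\subset(0,1)$. On $\Delta_m$, $w=2^{-(m-1)^2}$ on a piece of length $2^{-3m}$ and $w=2^{-m^2}$ on $I_m$. Both pieces carry $\nu$-mass $2^{-m^2-m-1}$, and $|\Delta_m|\geq2^{-m-1}$, so
\[
\frac{\bigl(\avint_{\Delta_m}w^p\,d\mu\bigr)^{1/p}}{\avint_{\Delta_m}w\,d\mu}
\geq|\Delta_m|^{\frac1{p'}}\,2^{\frac{3m}{p'}-1}
\geq2^{\frac{2m-1}{p'}-1}\xrightarrow[m\to\infty]{}\infty
\]
for every $p>1$. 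Here $\Delta_m\subset3J_m\cap Q_0$ with comparable $\mu$-measure, while $\nu(3J_m\cap Q_0)\geq2^{2m-1}\nu(\Delta_m)$; this is precisely the failure of $\nu(\wt Q_\Delta)\lesssim\nu(\Delta)$. Extending $w$ with period $1$ keeps it in $RH_\infty(\LL^1,\BB_\DD)$, so the equality $RH_p(\mu)=RH_p(\mu,\BB_\DD)$ fails too. This is consistent with Corollary \ref{coroThinAnnuli}, by which every measure in $RH_1^U(\LL^1)$ is doubling.

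For the first inclusion, your chaining through shells is the mechanism of Proposition \ref{propoThinAnnuli}. $RH_p$ on balls gives $QC_{\delta,\ve}$ on balls in $V_{Q_0}$ with $\delta=(\ve/C)^{p'}$ by Lemma \ref{lemTrivialImplications}. If $\mu$ has thin annuli, the proof of that proposition run inside $B_Q\cap X\subset V_{Q_0}$ gives $\nu(B_Q\cap X)\lesssim\nu(\Delta_Q)\leq\nu(Q)$, which is exactly the missing lower bound.

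Without thin annuli this step is not available. Example \ref{exLadder} is a weight in $RH_\infty$ over balls with $\nu(\Delta_{2r}(0))/\nu(\Delta_r(0))$ unbounded. Your fallbacks do not help either. The extra room in $V_{Q_0}$ only makes $B_Q\cap X$ an admissible ball; it says nothing about $\nu(B_Q\cap X)$ versus $\nu(Q)$. And there is no Calder\'on--Zygmund iteration on balls with which to run Theorem \ref{theoQCinfty}.

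What can actually be proved, by you or by the paper, is:
\begin{itemize}
\item the $A_p$ half as stated;
\item the $RH_p$ statements for doubling $\nu$, where your transfer argument closes with an extra factor $\widehat C_\nu$;
\item the first $RH_p$ inclusion when $\mu$ has thin annuli.
\end{itemize}
The second $RH_p$ inclusion, and the global $RH_p$ equality, are false in general.
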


\begin{proof}
For the second inclusion, let $\Delta\subset\mathring Q_0$ be a restricted
ball and let $w\in A_p(\mu,\BB_{Q_0})$. Since $\Delta\subset\wt Q_\Delta$ and
$\mu(\wt Q_\Delta)\leq\widehat C_\mu\mu(\Delta)$ by
\rf{eqDoublingConstantsModified}, both averages over $\Delta$ are bounded by
$\widehat C_\mu$ times the corresponding averages over $\wt Q_\Delta$, so
$$\avint_{\Delta}w\, d\mu
\left(\avint_{\Delta}w^{\frac1{1-p}}\, d\mu\right)^{p-1}
\leq\widehat C_\mu^{\,p}\avint_{\wt Q_\Delta}w\, d\mu
\left(\avint_{\wt Q_\Delta}w^{\frac1{1-p}}\, d\mu\right)^{p-1}\leq C .$$
The first inclusion is identical, with $Q\subset B_Q\cap X\subset V_{Q_0}$ in
place of $\Delta\subset\wt Q_\Delta$, and the $RH_p$ statements are obtained
in the same way.
\end{proof}

\begin{lemma}\label{lemDyadicToBallsAinfty}
Let $\mu$ be a doubling Radon measure in $\R^d$, let $Q_0\in\DD\cup3\DD$ and
let $0<\delta,\ve<1$. Then
$$QC_{1-\widehat C_\mu^{-1}\delta,\,1-\ve}(\mu,V_{Q_0})
\subset QC_{1-\delta,\,1-\ve}(\mu,\BB_{Q_0})
\subset QC_{1-\widehat C_\mu\delta,\,1-\ve}(\mu,\mathring Q_0).$$
In particular
$$A_\infty^{QC}(\mu,V_{Q_0})\subset A_\infty^{QC}(\mu,\BB_{Q_0})
\subset A_\infty^{QC}(\mu,\mathring Q_0),$$
and therefore $A_\infty^{QC}(\mu)=A_\infty^{QC}(\mu,\BB_{\mathcal{D}})$.
\end{lemma}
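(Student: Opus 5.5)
The plan is to prove the two inclusions separately, working throughout with the complementary form \rf{eqDefQCbis} from Remark \ref{remWMutual}. In that form, $\nu\in QC_{1-\delta',1-\ve}(\mu,\BB)$ means
$$\mu(E)>\delta'\mu(B)\implies\nu(E)\geq\ve\,\nu(B)\qquad\text{for every Borel }E\subset B\in\BB.$$
This is the natural way to read the subscripts. The lemma then says: a set occupying a $\delta$-proportion of one member of the basis occupies a proportion comparable to $\delta$, up to the factor $\widehat C_\mu$, of a comparable member of the other basis, while the conclusion $\nu(E)\geq\ve\nu(\cdot)$ passes between comparable sets by monotonicity of $\nu$.

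For the second inclusion, let $\nu\in QC_{1-\delta,1-\ve}(\mu,\BB_{Q_0})$, let $\Delta\subset\mathring Q_0$ be a restricted ball, and let $E\subset\Delta$ with $\mu(E)>\widehat C_\mu\delta\,\mu(\Delta)$. I would use the cube $\wt Q_\Delta\in3\DD(Q_0)$ with $\Delta\subset\wt Q_\Delta$. By \rf{eqDoublingConstantsModified}, $\mu(\wt Q_\Delta)\leq\widehat C_\mu\mu(\Delta)$, so $\mu(E)>\delta\,\mu(\wt Q_\Delta)$. The hypothesis, applied to $E\subset\wt Q_\Delta$, then gives $\nu(E)\geq\ve\,\nu(\wt Q_\Delta)\geq\ve\,\nu(\Delta)$. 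The first inclusion is the same argument with the roles exchanged. For $Q\in\BB_{Q_0}$ and $E\subset Q$ with $\mu(E)>\delta\mu(Q)$, use the restricted ball $B_Q\cap X\subset V_{Q_0}$. Since $\mu(B_Q)\leq\widehat C_\mu\mu(Q)$, we get $\mu(E)>\widehat C_\mu^{-1}\delta\,\mu(B_Q\cap X)$. The hypothesis then yields $\nu(E)\geq\ve\nu(B_Q\cap X)\geq\ve\nu(Q)$, using $Q\subset B_Q\cap X$ (Remark \ref{remDimensionalConstant} for $Q\in3\DD(Q_0)$). One small point needs care here: I must check that $B_Q\cap X$ is a restricted ball lying in $V_{Q_0}$, which is precisely how $V_{Q_0}$ was chosen.

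For the ``in particular'' statement, translate the quantifiers. Membership of $\nu$ in $A_\infty^{QC}$ means that for every $\delta_1$ there is an $\ve_1$ with $\nu\in QC_{\delta_1,\ve_1}$. Writing $\delta_1=1-\delta$ and $\ve_1=1-\ve$, the admissible values of $\delta$ are the small ones. Given a target $\delta_1=1-\delta$ for $\BB_{Q_0}$, I would apply the $V_{Q_0}$ hypothesis with the parameter $1-\widehat C_\mu^{-1}\delta$, obtain some $\ve$, and invoke the first inclusion. The same works for $\mathring Q_0$: given $\delta$, use $\delta/\widehat C_\mu$ in the dyadic hypothesis. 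If $\widehat C_\mu\delta\geq1$ the statement is vacuous or can be reduced to a smaller $\delta$, since $QC_{\delta,\ve}$ is monotone in $\delta$.

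Finally, for $A_\infty^{QC}(\mu)=A_\infty^{QC}(\mu,\BB_\DD)$ I would exhaust. Every restricted ball lies in $\mathring Q_0$ for some $Q_0\in3\DD$ of large generation, with constants that are uniform because $\widehat C_\mu$ does not depend on $Q_0$. Conversely, every member of $\BB_\DD$ lies in some $\BB_{Q_0}$, and $V_{Q_0}\subset X$. The main obstacle is this last uniformity check: ensuring that $\widehat C_\mu$ in \rf{eqDoublingConstantsModified} and the containment of each ball in the interior of some $Q_0$ hold with constants independent of $Q_0$. I would settle this by appealing to N\ref{eqrem3QNeighbors2} and the global doubling of $\mu$.
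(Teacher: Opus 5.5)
Your proposal is correct and follows the paper's proof. Both read the classes through \rf{eqDefQCbis} and pass from $Q$ to the restricted ball $B_Q\cap X\subset V_{Q_0}$ (respectively from $\Delta$ to $\wt Q_\Delta\in3\DD(Q_0)$), using \rf{eqDoublingConstantsModified} to compare the $\mu$-proportions and the monotonicity of $\nu$ to carry the conclusion over, while your explicit handling of the quantifiers and of the uniformity in $Q_0$ just spells out what the paper leaves to ``in particular'' and ``therefore'' (one wording slip: the cube $Q_0$ containing a given ball must be of coarse, i.e.\ small, generation, not large).
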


\begin{proof}
By \rf{eqDefQCbis}, membership in $QC_{1-\eta,1-\ve}$ means that
$\mu(E)>\eta\,\mu(B)$ implies $\nu(E)\geq\ve\,\nu(B)$ for every Borel
$E\subset B$ in the relevant basis. Let then
$\nu\in QC_{1-\widehat C_\mu^{-1}\delta,1-\ve}(\mu,V_{Q_0})$ and let
$E\subset Q\in\BB_{Q_0}$ with $\mu(E)>\delta\,\mu(Q)$. By
\rf{eqDoublingConstantsModified} we have
$\mu(Q)\geq\widehat C_\mu^{-1}\mu(B_Q\cap X)$, so
$\mu(E)>\widehat C_\mu^{-1}\delta\,\mu(B_Q\cap X)$, and since
$B_Q\cap X\subset V_{Q_0}$ the hypothesis yields
$\nu(E)\geq\ve\,\nu(B_Q\cap X)\geq\ve\,\nu(Q)$. The second inclusion is the
same argument with $\Delta\subset\wt Q_\Delta$ in place of
$Q\subset B_Q\cap X$.
\end{proof}

\begin{lemma}\label{lemQCDoubling}
Let $\mu$ be a doubling Radon measure in $\R^d$ and let
$Q_0\in\DD\cup3\DD$. Then
$$QC^{d}_{\delta,1-\ve}(\mu,\mathring Q_0)
\subset QC^{d}_{\widehat C_\mu^{-1}\delta,\,1-\widehat C_\nu^{-1}\ve}(\mu,\BB_{Q_0}),$$
and conversely with the two bases interchanged, $\widehat C_\nu$ being the constant
of \rf{eqDoublingConstantsModified} for $\nu$. In particular
$QC^{d}(\mu,\BB_{\mathcal{D}})=QC^{d}(\mu)$.
\end{lemma}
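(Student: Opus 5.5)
The plan is a one-sided restriction argument: shrink the set in the target basis to a comparable set in the source basis, apply the hypothesis there, and transfer back using the doubling of $\mu$ for the hypothesis on $\mu(E)$ and the doubling of $\nu$ for the conclusion on $\nu(E)$. I will work with the complementary form \rf{eqDefQCbis} of the conclusion. Thus $\nu\in QC_{\delta,1-\ve}$ means that $\mu(E)<\delta\,\mu(B)$ implies $\nu(B\setminus E)\geq\ve\,\nu(B)$.

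For the displayed inclusion, fix $\nu\in QC^{d}_{\delta,1-\ve}(\mu,\mathring Q_0)$, a set $Q\in\BB_{Q_0}$, and a Borel $E\subset Q$ with $\mu(E)<\widehat C_\mu^{-1}\delta\,\mu(Q)$.
\begin{enumerate}[(i)]
	\item Use the restricted ball $\Delta_Q\subset Q$ from D\ref{itemD4}, extended to $3\DD(Q_0)$ by Remark \ref{remDimensionalConstant}. It is relatively open and contained in $Q\subset Q_0$, so it lies in $\mathring Q_0$ and belongs to the ball basis.
	\item By \rf{eqDoublingConstantsModified}, $\mu(Q)\leq\widehat C_\mu\mu(\Delta_Q)$. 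Hence $\mu(E\cap\Delta_Q)\leq\mu(E)<\delta\,\mu(\Delta_Q)$.
	\item The hypothesis applied to $E\cap\Delta_Q\subset\Delta_Q$ then gives $\nu(\Delta_Q\setminus E)\geq\ve\,\nu(\Delta_Q)$.
	\item Since $\nu$ is doubling, \rf{eqDoublingConstantsModified} for $\nu$ gives $\nu(\Delta_Q)\geq\widehat C_\nu^{-1}\nu(Q)$. Therefore
	$$\nu(E)=\nu(Q)-\nu(Q\setminus E)\leq\nu(Q)-\nu(\Delta_Q\setminus E)\leq\bigl(1-\widehat C_\nu^{-1}\ve\bigr)\nu(Q).$$
\end{enumerate}
This is exactly membership in $QC_{\widehat C_\mu^{-1}\delta,\,1-\widehat C_\nu^{-1}\ve}(\mu,\BB_{Q_0})$. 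The doubling of $\nu$ is unchanged, since $\nu$ doubling implies $\DD$-doubling in $Q_0$.

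The converse is the same argument with the roles swapped. Given a restricted ball $\Delta\subset\mathring Q_0$ and $E\subset\Delta$ with $\mu(E)<\widehat C_\mu^{-1}\delta\,\mu(\Delta)$, use the inscribed cube $Q_\Delta\in\DD(Q_0)$ with $Q_\Delta\subset\Delta$. By \rf{eqDoublingConstantsModified}, $\mu(\Delta)\leq\widehat C_\mu\mu(Q_\Delta)$, so $\mu(E\cap Q_\Delta)<\delta\,\mu(Q_\Delta)$. Apply the dyadic hypothesis on $Q_\Delta$, and then use $\nu(\Delta)\leq\widehat C_\nu\,\nu(Q_\Delta)$, which holds because $\nu$ is doubling.

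For the final equality $QC^{d}(\mu,\BB_\DD)=QC^{d}(\mu)$, take unions over $(\delta,\ve)$; this only changes the constants, so it is harmless. What remains is covering. Every restricted ball lies in the relative interior of some $Q_0\in3\DD$ of coarse enough generation, by N\ref{eqrem3QNeighbors2}. Conversely, every cube of $\DD\cup3\DD$ lies in $\BB_{Q_0}$ for a suitable ancestor $Q_0$. Finally, $\nu$ is doubling if and only if it is $\DD$-doubling in every $Q_0$.

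I expect the main obstacle to be the bookkeeping in the final paragraph, not the estimate itself. One must check that the constants $\widehat C_\mu$ and $\widehat C_\nu$ are uniform in $Q_0$, and that the doubling of $\nu$ enters only through \rf{eqDoublingConstantsModified}.
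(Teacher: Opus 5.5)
Your proof is correct and is the paper's argument run forwards. The paper argues by contraposition, going from $\nu(E)>(1-\widehat C_\nu^{-1}\ve)\nu(Q)$ to $\nu(E\cap\Delta_Q)>(1-\ve)\nu(\Delta_Q)$ and then to $\mu(E)\geq\widehat C_\mu^{-1}\delta\,\mu(Q)$, but with exactly your ingredients: the inscribed ball $\Delta_Q\subset\mathring Q_0$, \rf{eqDoublingConstantsModified} for $\mu$ on the hypothesis side and for $\nu$ on the conclusion side, and $Q_\Delta\subset\Delta$ for the converse; your covering paragraph also spells out the ``in particular'' equality $QC^{d}(\mu,\BB_{\mathcal{D}})=QC^{d}(\mu)$, which the paper leaves implicit.
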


\begin{proof}
Let $\nu\in QC^{d}_{\delta,1-\ve}(\mu,\mathring Q_0)$ and let
$E\subset Q\in\BB_{Q_0}$ be Borel. If $\nu(E\cap\Delta_Q)\leq(1-\ve)
\nu(\Delta_Q)$ then $\nu(\Delta_Q\setminus E)\geq\ve\,\nu(\Delta_Q)$, and the
doubling of $\nu$ gives $\nu(Q\setminus E)\geq\widehat C_\nu^{-1}\ve\,\nu(Q)$;
contrapositively,
$$\nu(E)>\left(1-\widehat C_\nu^{-1}\ve\right)\nu(Q)
\implies\nu(E\cap\Delta_Q)>(1-\ve)\,\nu(\Delta_Q).$$
Applying the hypothesis to the ball $\Delta_Q\subset\mathring Q_0$, in the
form \rf{eqDefQCbis}, and then the doubling of $\mu$, we get
$\mu(E\cap\Delta_Q)\geq\delta\,\mu(\Delta_Q)$ and hence
$\mu(E)\geq\widehat C_\mu^{-1}\delta\,\mu(Q)$, which is the assertion. The converse
is the same with $Q_\Delta\subset\Delta$ in place of $\Delta_Q\subset Q$.
\end{proof}

\begin{proposition}\label{propoDoublingAllEquivalent}
If $\mu$ is a doubling Radon measure in $\R^d$ and $\DD$ is a dyadic grid
associated to $X=\supp\mu$, then
$$A_\infty^{U}(\mu,\BB_{\mathcal{D}})=A_\infty^{QC}(\mu,\BB_{\mathcal{D}})
=QC^{d}(\mu,\BB_{\mathcal{D}})=QC^{d}(\mu)=A_\infty^{QC}(\mu)=A_\infty^{U}(\mu).$$
\end{proposition}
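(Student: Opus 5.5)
The chain has six terms, and I will prove it as three pieces that are then glued together: the dyadic equalities, the transfer of $QC^{d}$ from cubes to balls, and the left-hand collapse over balls.

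\emph{Dyadic part.} The first two equalities over $\BB_{\mathcal D}$ come from Theorem \ref{theoEqualityA}, applied in every $Q_0\in\DD$. Lemma \ref{lemDoublingNeighbors} guarantees that a doubling $\mu$ is $\DD$-doubling in every such $Q_0$ with a uniform constant. Some care is needed with global classes. Membership in $\CC(\mu,\BB_{\mathcal D})$ means the defining inequality holds on all of $\DD\cup3\DD$ with one constant. Theorem \ref{theoEqualityA} is quantitative, so the constants it produces depend only on $C_\mu$ and on the constants of the hypothesis, not on $Q_0$. Indeed, the passage $QC^d\Rightarrow RH_{p'}$ of the dual measure in Theorem \ref{theoQCinfty} gives explicit constants, and duality \rf{eqDualityWeights} preserves them up to powers. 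It remains to pass from "every $Q_0$" to the whole grid. Any cube or triple cube is contained in some $Q_0\in\DD$ of a coarser generation, and $3Q\cap Q_0=3Q$ once $Q_0$ is large enough. Hence the classes over $\BB_{\mathcal D}$ are the unions, with uniform constants, of the classes over $\BB_{Q_0}$.

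\emph{Cubes to balls for $QC^d$.} The equality $QC^{d}(\mu,\BB_{\mathcal D})=QC^{d}(\mu)$ is the "in particular" of Lemma \ref{lemQCDoubling}. I take $Q_0$ exhausting $X$, and every restricted ball lies in $\mathring Q_0$ for some such $Q_0$. The constants $\widehat C_\mu$ and $\widehat C_\nu$ are independent of $Q_0$. Doubling of $\nu$ for balls and $\DD$-doubling in every $Q_0$ are equivalent, as noted at the start of \S\ref{secBalls}.

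\emph{Left-hand collapse over balls.} Lemma \ref{lemDyadicToBallsAinfty} gives $A_\infty^{QC}(\mu)=A_\infty^{QC}(\mu,\BB_{\mathcal D})$. Lemma \ref{lemApDyadicornot}, taking unions over $p$, gives $A_\infty^{U}(\mu)=A_\infty^{U}(\mu,\BB_{\mathcal D})$. Concatenating these with the two previous pieces yields the full chain.

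The main obstacle is the uniformity in $Q_0$ when passing from the local statements ($V_{Q_0}$, $\mathring Q_0$) to the global classes. Two things have to be checked. First, the constants in Lemmas \ref{lemApDyadicornot}--\ref{lemQCDoubling} and Theorem \ref{theoEqualityA} do not depend on $Q_0$. Second, every ball and every cube of $\BB_{\mathcal D}$ is eventually captured by $\mathring Q_0$ or $V_{Q_0}$ respectively. If $X$ is bounded, a single top cube suffices. Otherwise I would use the nested exhausting sequence of ancestors of a fixed cube, which covers $X$ provided the grid has no "quadrant" obstruction. If it does, I would instead cover by finitely many top-level cubes and use their neighbours.
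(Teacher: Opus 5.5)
Your proposal is essentially the paper's proof. It uses the same four ingredients: Theorem \ref{theoEqualityA} in each $Q_0$, with constants independent of $Q_0$, and Lemmas \ref{lemQCDoubling}, \ref{lemDyadicToBallsAinfty} and \ref{lemApDyadicornot}. The paper arranges them as the cycle of inclusions $A_\infty^{U}(\mu)\subset A_\infty^{QC}(\mu)\subset A_\infty^{QC}(\mu,\BB_{\DD})\subset A_\infty^{U}(\mu,\BB_{\DD})\subset A_\infty^{U}(\mu)$ rather than as a chain of equalities.

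The step that needs repair is capturing each set by a single $Q_0$.
\begin{itemize}
\item Your claim that every triple cube lies in some $Q_0\in\DD$ is false, and so is the analogous claim for restricted balls.
\item For the standard dyadic grid of $\R$, $3[0,1)$ and $\Delta_r(0)$ meet both half-lines. Every dyadic interval, and in particular every ancestor of a fixed cube, lies in one of them.
\end{itemize}

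The clean fix is to let $Q_0$ range over $\DD\cup3\DD$. The definitions allow this, and it is what the paper does.
\begin{itemize}
\item $3Q\subset3\widehat Q$ by N\ref{eqrem3QNeighbors1}, so $3Q\in\BB_{3\widehat Q}$.
\item A restricted ball of radius $r\leq2\ell(Q)$ centred in $Q$ lies in the relative interior of $3Q$ by N\ref{eqrem3QNeighbors2}.
\end{itemize}
Since all constants are uniform, this local capture is all you need, and no exhaustion of $X$ is required.

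The doubling condition in $QC^{d}(\mu,\BB_{\DD})$ must be read over the same family. With $Q_0\in\DD$ only, the measure $\bigl(\chi_{[0,\infty)}+|x|^a\chi_{(-\infty,0)}\bigr)\,dx$, $a>0$, is $\DD$-doubling in every dyadic interval with a uniform constant, yet it is not doubling.
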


\begin{proof}
Lemmas \ref{lemTrivialImplications} and \ref{lemDyadicToBallsAinfty} give
$$A_\infty^{U}(\mu,V_{Q_0})\subset A_\infty^{QC}(\mu,V_{Q_0})
\subset A_\infty^{QC}(\mu,\BB_{Q_0}),$$
while \rf{eqBothDoubling} and Lemma \ref{lemApDyadicornot} give
$$A_\infty^{QC}(\mu,\BB_{Q_0})\by{\rf{eqBothDoubling}}=
A_\infty^{U}(\mu,\BB_{Q_0})\subset A_\infty^{U}(\mu,\mathring Q_0).$$
More precisely, a weight $\nu\in A_\infty^{QC}(\mu,\BB_{\mathcal{D}})$ lies in
$A_q(\mu,\mathring Q_0)$ with $q$ depending only on the $A_\infty^{QC}$
constants, which are independent of $Q_0$; since every restricted ball is
contained in some $\mathring Q_0$ with $Q_0\in3\DD$ of comparable radius
(Remark \ref{remDimensionalConstant}), this gives $\nu\in A_q(\mu)$. Hence
$$\begin{aligned}
\nu\in A_\infty^{U}(\mu)&\implies\nu\in A_\infty^{QC}(\mu)
\implies\nu\in A_\infty^{QC}(\mu,\BB_{\mathcal{D}})\\
&\implies\nu\in A_\infty^{U}(\mu,\BB_{\mathcal{D}})\implies\nu\in A_\infty^{U}(\mu),
\end{aligned}$$
which gives the first, second, fifth and sixth equalities. The remaining ones are
\rf{eqBothDoubling} and Lemma \ref{lemQCDoubling}.
\end{proof}

Dualizing by \rf{eqDualityWeights}, we obtain the reverse H\"older counterpart,
in which the superscript $^{d}$ is now unavoidable.

\begin{corollary}\label{coroDoublingAllEquivalentDual}
If $\mu$ is a Radon measure in $\R^d$ and $\DD$ is a dyadic grid associated to
$\supp\mu$, then
$$RH_1^{U,d}(\mu,\BB_{\mathcal{D}})=RH_1^{QC,d}(\mu,\BB_{\mathcal{D}})=RH_1^{QC,d}(\mu)
=RH_1^{U,d}(\mu),$$
and these classes are empty unless $\mu$ is doubling. Consequently, if $\mu$
is doubling,
$$A_\infty^{U}(\mu)=A_\infty^{QC}(\mu)=QC^{d}(\mu)=RH_1^{QC,d}(\mu)
=RH_1^{U,d}(\mu).$$
\end{corollary}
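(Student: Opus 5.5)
The plan is to dualize Proposition \ref{propoDoublingAllEquivalent} through \rf{eqDualityWeights}, keeping track of which measure is doubling. The first step settles the emptiness claim. Suppose $\nu\in RH_1^{QC,d}(\mu,\BB)$ for either basis. Then $\nu$ is doubling by definition of the superscript. Remark \ref{remWMutual} gives mutual absolute continuity, and \rf{eqDefQCbis} lets us transfer largeness from $\nu$ to $\mu$: given $\eta<1$ there is $\delta'$ with $\nu(E)<(1-\eta)\nu(B)\implies\mu(E)\leq(1-\delta')\mu(B)$. Apply this with $B=3Q\cap Q_0$ (or $B=2\Delta$) and $E=B\setminus R$ (or $E=B\setminus\Delta$). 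Since $\nu$ is doubling, $\nu(E)\leq(1-C_\nu^{-1})\nu(B)$. The quantifier order in $RH_1^{QC}$ lets us take $\ve$ arbitrary, in particular $\ve<C_\nu^{-1}$, which yields $\mu(R)\geq\delta'\mu(B)$. By Lemma \ref{lemDoublingNeighbors} this is $\DD$-doubling of $\mu$, and its ball analogue is ordinary doubling. So the classes are empty unless $\mu$ is doubling, and from now on we assume it is.

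The second step is the chain itself. Interchanging roles via the third line of \rf{eqDualityWeights}, $\nu\in RH_1^{QC}(\mu,\cdot)$ iff $\mu\in A_\infty^{QC}(\nu,\cdot)$. Now $\nu$ is doubling, so Proposition \ref{propoDoublingAllEquivalent} applied with the roles of $\mu$ and $\nu$ exchanged gives
\[
A_\infty^{U}(\nu,\BB_\DD)=A_\infty^{QC}(\nu,\BB_\DD)=A_\infty^{QC}(\nu)=A_\infty^{U}(\nu).
\]
Both $\mu$ and $\nu$ are doubling, so membership of $\mu$ in any of these classes is equivalent to membership of $\nu$ in the dual class. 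For $A_\infty^{U}$ this uses the first line of \rf{eqDualityWeights}: $\mu\in A_q(\nu)$ iff $\nu\in RH_{q'}(\mu)$, and the union over $q$ corresponds to the union over $q'>1$. The doubling constraint on $\nu$ is preserved throughout, so restricting to doubling $\nu$ yields the four equalities of the first display.

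The last display then follows by combining the first display with Proposition \ref{propoDoublingAllEquivalent}. The key remaining point is $QC^{d}(\mu)=RH_1^{QC,d}(\mu)$. I would prove it through $A_\infty^{QC}(\mu)=A_\infty^{U}(\mu)$ and the duality of the third line. Alternatively, I would observe directly that for doubling $\nu$ in $QC(\mu)$, Theorem \ref{theoEqualityRH} on each $Q_0$ together with Lemma \ref{lemApDyadicornot} (uniform constants) gives $\nu\in RH_p(\mu)$, so $\nu\in RH_1^{U,d}(\mu)$. With the trivial inclusions \rf{eqTrivialImplications} this closes the loop: $RH_1^{U,d}\subset RH_1^{QC,d}\subset QC^d=A_\infty^U$.

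The main obstacle is the step that needs uniformity over $Q_0$ when passing from the dyadic bases $\BB_{Q_0}$ to all restricted balls. It requires that the exponent $p$ from Theorem \ref{theoQCinfty} depend only on $(\delta_0,\ve_0,C_\mu)$ and not on $Q_0$, exactly as in the proof of Proposition \ref{propoDoublingAllEquivalent}. I will also have to check the doubling-of-$\mu$ extraction carefully on restricted balls rather than cubes.
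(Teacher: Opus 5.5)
Your emptiness argument and your derivation of the first display are correct, and they follow the paper: the corollary is Proposition \ref{propoDoublingAllEquivalent} dualized through \rf{eqDualityWeights} with the roles of $\mu$ and $\nu$ exchanged. Your Step 1 is the dual of Lemma \ref{lemBothDoubling}; choosing $\ve<C_\nu^{-1}$ is exactly its threshold $\delta_0>1-C_\nu^{-1}$, read through the second line of \rf{eqDualityWeights}. Emptiness also falls out of the chain, since $\mu\in A_\infty^{QC}(\nu)=QC^{d}(\nu)$.

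The gap is the link $QC^{d}(\mu)\subset RH_1^{QC,d}(\mu)$ in the last display.

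Your first route cannot produce it as stated. The identity $A_\infty^{QC}(\mu)=A_\infty^{U}(\mu)$ has $\mu$ as base measure. The first and third lines of \rf{eqDualityWeights} turn $\nu\in A_q(\mu)$ or $\nu\in A_\infty^{QC}(\mu)$ into $\mu\in RH_{q'}(\nu)$ or $\mu\in RH_1^{QC}(\nu)$. That is information about $\mu$ relative to $\nu$, never the converse. What closes the link is the \emph{second} line, which makes $QC$ self-dual, combined with the swap you already used. If $\nu\in QC^{d}(\mu)$ and $\mu$ is doubling, then $\mu\in QC^{d}(\nu)=A_\infty^{QC}(\nu)$ by Proposition \ref{propoDoublingAllEquivalent} with base $\nu$. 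The third line then returns $\nu\in RH_1^{QC}(\mu)$.

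Your second route is sound in outline but omits the delicate step. Theorem \ref{theoQCinfty} needs $\nu\in QC(\mu,\BB_{Q_0})$ with parameters independent of $Q_0$, while you only have \rf{eqDefQC} on balls. That ball-to-cube transfer is exactly what Remark \ref{remQCnotQCD} says may fail. It is supplied by Lemma \ref{lemQCDoubling}, which is where the doubling of $\nu$ is used, with parameters depending only on $\delta$, $\ve$, $\widehat C_\mu$ and $\widehat C_\nu$. With it inserted, Theorem \ref{theoQCinfty} gives a uniform $p$ and constant, and Lemma \ref{lemApDyadicornot} brings you back to balls. The uniformity you singled out as the main obstacle is this harmless cube-to-ball direction, not the one that needs care.
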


\begin{remark}\label{remAllAreTheSame}
Thus, when $\mu$ and $\nu$ are doubling measures with common support, all the
conditions of \S\ref{secClasses} agree, and to verify any of them it suffices
to verify a reverse H\"older inequality, or to bound an $A_p$ constant, or to
exhibit a single pair $(\delta,\ve)$ with $\nu\in QC_{\delta,\ve}(\mu)$, or
the same on the basis $\DD\cup3\DD$. In this generality the statement is due
to Korte and Kansanen \cite[Cor.~4.19, Rem.~4.20]{KorteKansanen}, for complete
doubling metric measure spaces.
\end{remark}

A warning is in order before we turn to the superscript $^{d}$ itself.

\begin{remark}\label{remQCnotQCD}
Proposition \ref{propoDoublingAllEquivalent} does not yield
$QC(\mu)=QC(\mu,\BB_{\mathcal{D}})$. Writing $\delta=1-\delta_0$, the first inclusion
of Lemma \ref{lemDyadicToBallsAinfty} sends
$$QC_{\delta,\ve}(\mu)\quad\text{into}\quad
QC_{1-\widehat C_\mu\delta_0,\,\ve}(\mu,\BB_{Q_0}),$$
and the new parameter is
admissible only when $\delta_0<\widehat C_\mu^{-1}$; a measure that belongs to
$QC_{\delta,\ve}(\mu)$ only for small $\delta$ therefore need not belong to
any $QC_{\delta',\ve'}(\mu,\BB_{Q_0})$. The class $A_\infty^{QC}$ is
unaffected, since there $\delta$ may be taken as close to $1$ as one pleases;
$RH_1^{QC}$ is not, and this is why Corollary
\ref{coroDoublingAllEquivalentDual} has to be obtained by duality rather than
directly. Whether $QC(\mu)=QC(\mu,\BB_{\mathcal{D}})$ we do not know; see
\S\ref{secOpen}.
\end{remark}

\subsection{Thin annuli}

The superscript $^{d}$ in Corollary \ref{coroDoublingAllEquivalentDual} is the
price of working with balls rather than cubes, and it can be traced to a
single property of $\mu$. Say that $\mu$ has \emph{thin annuli} if
\begin{equation}\label{eqThinAnnuli}
	\forall\theta>0\ \exists\eta>0:\quad
	\mu\left(\Delta_{(1+\eta)r}(x)\setminus\Delta_{r}(x)\right)
	\leq\theta\,\mu\left(\Delta_{(1+\eta)r}(x)\right)
\end{equation}
for every $x\in X$ and every $r>0$. The Lebesgue measure in $\R^d$ has thin
annuli, the quotient in \rf{eqThinAnnuli} being $1-(1+\eta)^{-d}$; a doubling
measure on a closed subset need not, as Example \ref{exLadder} shows.
Condition \rf{eqThinAnnuli} is implied by the annular decay property used by
Maasalo \cite[\S3]{Maasalo}.

\begin{proposition}\label{propoThinAnnuli}
Let $\mu$ be a doubling Radon measure in $\R^d$ with thin annuli and let
$\nu\in QC_{\delta,\ve}(\mu)$ for \emph{some} pair $0<\delta,\ve<1$. Then
$\nu$ is doubling, with constant $(1-\ve)^{-N}$, where
$N=\lceil\log2/\log(1+\eta)\rceil$ and $\eta$ is the parameter associated with
$\theta=\delta/2$ in \rf{eqThinAnnuli}.
\end{proposition}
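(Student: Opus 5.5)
The plan is to prove a one-step estimate and iterate it. The one-step estimate is
$$\nu(\Delta_{(1+\eta)r}(x))\leq(1-\ve)^{-1}\nu(\Delta_r(x))\qquad\text{for every }x\in X,\ r>0 ,$$
with $\eta$ chosen from \rf{eqThinAnnuli} for $\theta=\delta/2$. Applying it $N$ times to the radii $r,(1+\eta)r,\dots,(1+\eta)^{N-1}r$ gives
$$\nu(\Delta_{2r}(x))\leq\nu(\Delta_{(1+\eta)^Nr}(x))\leq(1-\ve)^{-N}\nu(\Delta_r(x)),$$
because $(1+\eta)^N\geq2$ by the choice $N=\lceil\log2/\log(1+\eta)\rceil$. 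This is the doubling condition for $\nu$ with the claimed constant, at every centre $x\in X$. Since $\supp\nu\subset X$, Definition \ref{defDoubling} only requires centres in $\supp\nu$, and $\supp\nu=X$ anyway by Remark \ref{remWMutual}.

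For the one-step estimate, fix $x\in X$ and $r>0$. Take the ball $B\coloneqq\Delta_{(1+\eta)r}(x)$, which lies in the basis of restricted balls because $V=X$, and the Borel set $E\coloneqq B\setminus\Delta_r(x)$. By \rf{eqThinAnnuli} with $\theta=\delta/2$,
$$\mu(E)\leq\tfrac\delta2\,\mu(B)<\delta\,\mu(B).$$
Strict inequality here needs $\mu(B)>0$, which holds because $x\in\supp\mu$. Condition \rf{eqDefQC} then gives $\nu(E)\leq\ve\,\nu(B)$. Since $\nu(B)<\infty$ ($\nu$ is Radon and $B$ is bounded), we can subtract:
$$\nu(\Delta_r(x))=\nu(B)-\nu(E)\geq(1-\ve)\,\nu(B),$$
which is the one-step estimate. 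This argument is the ball analogue of Lemma \ref{lemBothDoubling}. There the doubling of $\mu$ made the complement of a child small in $\mu$-measure only relative to $1-C_\mu^{-1}$, so a $\delta_0$ close to $1$ was needed. Here thin annuli make the annulus arbitrarily small in $\mu$-measure, so any $\delta$ suffices. That is the sense of ``\emph{some} pair'' in the statement.

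I do not expect a real obstacle. The only points needing care are bookkeeping. The first is the strictness of $\mu(E)<\delta\mu(B)$, which is why $\theta=\delta/2$ is used rather than $\theta=\delta$. The second is the $\nu(B)=0$ case, where both sides of the doubling inequality vanish. The third is that Definition \ref{defDoubling} is phrased with open Euclidean balls $B_r(x)$, whereas the estimate above is for restricted balls. This causes no trouble because $\nu(B_r(x))=\nu(\Delta_r(x))$, as $\nu$ is supported in $X$.
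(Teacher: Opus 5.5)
Your proof is correct and follows essentially the paper's argument. The paper also applies \rf{eqDefQC} to the annulus $E=\Delta_{(1+\eta)r}(x)\setminus\Delta_r(x)$ inside $\Delta_{(1+\eta)r}(x)$, with $\theta=\delta/2$ to secure the strict inequality, to get $\nu(\Delta_r(x))\geq(1-\ve)\,\nu(\Delta_{(1+\eta)r}(x))$, and then iterates $N$ times; the extra bookkeeping you supply (positivity of $\mu$ on balls centred in $X$, finiteness of $\nu(B)$, $(1+\eta)^N\geq2$, and $\nu(B_r(x))=\nu(\Delta_r(x))$) is what its two-line proof leaves implicit.
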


\begin{proof}
Put $\Delta'=\Delta_r(x)$ and $\Delta=\Delta_{(1+\eta)r}(x)$. The set
$E=\Delta\setminus\Delta'$ satisfies
$\mu(E)\leq\frac\delta2\mu(\Delta)<\delta\,\mu(\Delta)$, so \rf{eqDefQC} gives
$\nu(E)\leq\ve\,\nu(\Delta)$ and hence $\nu(\Delta')\geq(1-\ve)\nu(\Delta)$.
Iterating $N$ times yields
$\nu(\Delta_r(x))\geq(1-\ve)^{N}\nu(\Delta_{2r}(x))$.
\end{proof}

Since $QC(\mu)$ is the largest of the classes of \S\ref{secClasses}, the whole
of row (a) of Figure \ref{figImplications} collapses.

\begin{corollary}\label{coroThinAnnuli}
If $\mu$ is a doubling Radon measure in $\R^d$ with thin annuli, then
$$A_\infty^{U}(\mu)=A_\infty^{QC}(\mu)=QC(\mu)=RH_1^{QC}(\mu)=RH_1^{U}(\mu),$$
and the superscript $^{d}$ may be omitted throughout. Combined with
\cite[Thm.~5.2]{AndersonWeirich}, such a $\mu$ also satisfies the strong
Gehring lemma.
\end{corollary}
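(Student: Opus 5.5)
The plan is to chain Proposition \ref{propoThinAnnuli} with Proposition \ref{propoDoublingAllEquivalent} and Corollary \ref{coroDoublingAllEquivalentDual}. By \rf{eqTrivialImplications}, applied over the basis of restricted balls, every class in the displayed chain is contained in $QC(\mu)$. It therefore suffices to prove the reverse inclusions $QC(\mu)\subset A_\infty^{U}(\mu)$ and $QC(\mu)\subset RH_1^{U}(\mu)$.

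\textbf{Step 1.} Let $\nu\in QC(\mu)$, so $\nu\in QC_{\delta,\ve}(\mu)$ for some pair $(\delta,\ve)$. Since $\mu$ is doubling with thin annuli, Proposition \ref{propoThinAnnuli} shows that $\nu$ is doubling. Hence $\nu\in QC^{d}(\mu)$, that is, $QC(\mu)=QC^{d}(\mu)$. This identity is also what justifies dropping the superscript $^{d}$ throughout.

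\textbf{Step 2.} Corollary \ref{coroDoublingAllEquivalentDual}, valid since $\mu$ is doubling, gives
$$QC^{d}(\mu)=A_\infty^{U}(\mu)=A_\infty^{QC}(\mu)=RH_1^{QC,d}(\mu)=RH_1^{U,d}(\mu).$$
The first two of these equalities also appear in Proposition \ref{propoDoublingAllEquivalent}. Combined with Step 1, this gives $QC(\mu)\subset A_\infty^{U}(\mu)$ and $QC(\mu)\subset RH_1^{U}(\mu)$, so all five classes coincide. For the same reason $RH_1^{QC}(\mu)=RH_1^{QC,d}(\mu)$ and $RH_1^{U}(\mu)=RH_1^{U,d}(\mu)$: every member lies in $QC(\mu)$ and is therefore doubling.

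\textbf{Step 3 (Gehring).} Suppose $w$ satisfies \rf{eqStrongRHintro} for some $p>1$. Then $w\in RH_p(\mu)\subset QC(\mu)$ by Lemma \ref{lemTrivialImplications}, so the measure $w\,d\mu$ is doubling by Step 1. This is exactly the hypothesis of \cite[Thm.~5.2]{AndersonWeirich}, which applies because $(X,|\cdot|,\mu)$ is a space of homogeneous type. It yields self-improvement to some exponent $p+\ve$.

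The one place needing care is Step 2. Corollary \ref{coroDoublingAllEquivalentDual} is stated with the superscript $^{d}$ on the reverse Hölder side, so the argument must verify that Step 1 removes it on both sides, including for $RH_1^{QC}$. This works because each of these classes sits inside $QC(\mu)$ by Lemma \ref{lemTrivialImplications}, and Step 1 applies to everything in $QC(\mu)$. Beyond this, no computation is expected.
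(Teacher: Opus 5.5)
Your proof is correct and is essentially the paper's. Proposition~\ref{propoThinAnnuli} gives $QC(\mu)=QC^{d}(\mu)$, and Proposition~\ref{propoDoublingAllEquivalent} and Corollary~\ref{coroDoublingAllEquivalentDual} then identify this class with all the others; the superscript $^{d}$ drops on the reverse H\"older side because $RH_1^{U}(\mu)\subset RH_1^{QC}(\mu)\subset QC(\mu)$. You are, if anything, more explicit than the paper: you cite the equality $QC^{d}(\mu)=RH_1^{QC,d}(\mu)$ from the second display of Corollary~\ref{coroDoublingAllEquivalentDual} to join the two halves, and you spell out the Gehring consequence via Lemma~\ref{lemTrivialImplications} and your Step~1.
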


\begin{proof}
By Proposition \ref{propoThinAnnuli} every $\nu\in QC(\mu)$ is doubling, so
$QC(\mu)=QC^{d}(\mu)$, which Proposition \ref{propoDoublingAllEquivalent}
identifies with $A_\infty^{U}(\mu)$ and $A_\infty^{QC}(\mu)$. In particular
$RH_1^{QC}(\mu)\subset QC(\mu)$ consists of doubling measures, so Corollary
\ref{coroDoublingAllEquivalentDual} gives
$$RH_1^{QC}(\mu)=RH_1^{QC,d}(\mu)=RH_1^{U,d}(\mu)=RH_1^{U}(\mu).\qedformula$$\qedOut
\end{proof}

\begin{remark}\label{remNoDyadicAnalogue}
Proposition \ref{propoThinAnnuli} has no counterpart on the basis
$\DD\cup3\DD$, and the reason is not that dyadic cubes have bad boundaries.
Its proof goes from $\Delta_r(x)$ to $\Delta_{2r}(x)$ in $N$ steps, each of
which costs a factor $1-\ve$ because the annulus it adds carries little
$\mu$-mass; this needs a continuum of radii, and over $\DD\cup3\DD$ there is
nothing between $Q$ and $\widehat Q$, nor between $Q$ and $3Q$.

What the dyadic setting has instead is Lemma \ref{lemBothDoubling}, which
reaches the same conclusion in a single step and pays for it with a threshold:
the set $3Q\cap Q_0\setminus R$ removed in that step is a fraction of
$3Q\cap Q_0$ that is only known to be at most $1-C_\mu^{-1}$, so
\rf{eqDefQC} is of use there only when $\delta_0>1-C_\mu^{-1}$. This is why
the two halves of \rf{eqTrivialImplications} behave differently:
$A_\infty^{QC}$ quantifies over every $\delta$ and therefore meets the
threshold, so it forces $\nu$ to be $\DD$-doubling, whereas $RH_1^{QC}$ is
given its $\delta$ and may well miss it. Example \ref{exStaircase} shows that
it does: there $\nu$ satisfies \rf{eqDefQC} for every small $\delta$ and is
not $\DD$-doubling. 

Restoring the small steps would mean replacing $3\DD$ by a family of
enlargements $(1+\ve)^j Q$ assembled from triples of descendants of $Q$ of a deep
generation inside $3Q$. Such a family has worse nesting properties because intermediate cubes are no longer made of unions of cubes of the
generation of $Q$, and both properties are used throughout; see Remark
\ref{remAdjacentSystems}. We have not studied it.
\end{remark}

\begin{example}\label{exLadder}
Without thin annuli the conclusion of Corollary \ref{coroThinAnnuli} fails,
already on a closed subset of $\R$ and with restricted Euclidean balls. Put
$$X:=\{0\}\cup\{2^{k}:k\in\Z\},\qquad
\mu:=\sum_{k\in\Z}2^{k}\delta_{2^{k}},$$
$$b_k:=2^{k|k|},\qquad w(2^{k}):=b_k2^{-k},$$
so that $\nu=w\mu=\sum_{k\in\Z}b_k\delta_{2^{k}}$. Then $X$ is closed, $\mu$
is a doubling Radon measure with $\supp\mu=X$, and $w>0$ away from the origin, while
$$w\in RH_\infty(\mu)\subset RH_1^{U}(\mu)
\qquad\text{and}\qquad \nu\text{ is not doubling} .$$
In particular $w\notin A_\infty^{U}(\mu)$, so $RH_1^{U}(\mu)$ is strictly
larger than $RH_1^{U,d}(\mu)$ and the superscript cannot be dropped. We leave the computations to the reader.
\end{example}

\begin{figure}[ht]
\centering
\begin{tikzpicture}[
	scale=1.12,
	lbl/.style={font=\scriptsize,inner sep=1.5pt},
	cut/.style={densely dashed,gray}
]
	\draw[->,gray] (-0.45,0) -- (9.1,0);
	\node[lbl,gray,anchor=north] at (9.1,-0.08) {$\R$};
	\fill (0,0) circle (0.5pt);
	\node[lbl,anchor=north] at (0,-0.1) {$0$};
	\foreach \k/\x/\rr in {-3/0.125/0.019,-2/0.25/0.027,-1/0.5/0.039,
		0/1/0.055,1/2/0.078,2/4/0.110,3/8/0.155}
		{\fill (\x,0) circle (\rr);}
	\node[lbl,anchor=north] at (1,-0.12) {$1$};
	\node[lbl,anchor=north] at (2,-0.14) {$2$};
	\node[lbl,anchor=north] at (4,-0.16) {$4$};
	\node[lbl,anchor=north] at (8,-0.2) {$8$};
	\draw[cut] (3.75,-0.55) -- (3.75,0.95);
	\node[lbl,gray,anchor=south east] at (3.72,0.97) {$r$};
	\draw[cut] (4.3,-0.55) -- (4.3,0.95);
	\node[lbl,gray,anchor=south west] at (4.33,0.97) {$(1+\eta)r$};
	\draw[|-|] (0,0.55) -- (3.72,0.55);
	\node[lbl,anchor=south] at (1.9,0.58)
		{$\mu\bigl(\Delta_r(0)\bigr)=4$};
	\draw[|-|] (0,-0.5) -- (4.28,-0.5);
	\node[lbl,anchor=north] at (2.1,-0.53)
		{$\mu\bigl(\Delta_{(1+\eta)r}(0)\bigr)=8$};
	\node[lbl,anchor=south west] at (4.15,0.14)
		{$\mu(\{4\})=4$, \ $\nu(\{4\})=b_2$};
\end{tikzpicture}
\caption{Example \ref{exLadder}: the atoms of $\mu$, drawn with radius
proportional to $\sqrt{\mu(\{2^{k}\})}$. Taking $r$ slightly below $2^{k}$,
the single atom $2^{k}$ lying in the annulus carries as much $\mu$-mass as the
whole of $\Delta_r(0)$, whatever $\eta$ is: the annuli are not thin, and the
quotient in \rf{eqThinAnnuli} equals $\frac12$. The weight is chosen so that
the same atom carries incomparably more $\nu$-mass than everything inside, so
that $\nu$ is not doubling, while it remains increasing and therefore stays in
$RH_\infty(\mu)$.}
\label{figLadder}
\end{figure}

\begin{remark}\label{remLadderNonAtomic}
The atoms are a convenience, not the point. Replacing each $2^{k}$ by a closed
interval $P_k$ centred there, of length $\delta_k$ much smaller than the gap
$2^{k-1}$ and carrying mass $2^{k}$ distributed uniformly, produces a
non-atomic example with the same properties: a ball meeting $P_k$ and reaching
anything else has radius larger than the gap and therefore contains $P_k$
entirely, while a ball meeting only $P_k$ sees a constant weight. Such a $\mu$
is of course not Ahlfors regular, and cannot be: the gaps between consecutive
scales are what makes the annuli fat.
\end{remark}

\section{The weak classes}
\label{secWeak}

We now introduce the three conditions in which every comparison between $\nu$
and $\mu$ is allowed to involve a dilated ball, and record the two implications
between them that are immediate. The third one, which closes the circle, is
the content of \S\ref{secBL}.

\begin{definition}\label{defWeakClasses}
Let $\mu$ be a Radon measure in $\R^d$, let $X=\supp\mu$, let $V\subset X$ be
relatively open and let $\nu$ be a Radon measure supported in $X$.
\begin{itemize}
	\item Given $1<p<\infty$, we say that $\nu\in wRH_p(\mu,V)$ if $\nu\ll\mu$,
	the density $w=\frac{d\nu}{d\mu}$ is positive $\mu$-a.e., and
	\begin{equation}\label{eqWeakRH}
		\left(\avint_{\Delta}w^p\, d\mu\right)^{\frac1p}
		\leq\frac{C_{RH}}{\mu(\Delta)}\int_{2\Delta}w\, d\mu
		=C_{RH}\,\frac{\nu(2\Delta)}{\mu(\Delta)}
		\qquad\text{whenever }2\Delta\subset V .
	\end{equation}
	\item Given $0<\delta_0,\ve_0<1$, we say that
	$\nu\in wQC_{\delta_0,\ve_0}(\mu,V)$ if
	\begin{equation}\label{eqWeakQC}
		\mu(E\cap\Delta)<\delta_0\,\mu(\Delta)
		\implies\nu(E\cap\Delta)\leq\ve_0\,\nu(2\Delta)
	\end{equation}
	for every Borel set $E$ and every restricted ball $\Delta$ with $2\Delta\subset V$.
	\item We say that $\nu\in wBL_{\ve_0}(\mu,V)$ if \rf{eqBLintro} holds for
	some $0<\delta_0<1$ and $C_0\geq1$, as in Definition \ref{defBLintro}.
\end{itemize}
Finally we put
$$\begin{aligned}
wRH_1^{U}(\mu,V)&\coloneqq \bigcup_{1<p<\infty}wRH_p(\mu,V),\\
wRH_1^{BL}(\mu,V)&\coloneqq \bigcap_{0<\ve<1}wBL_{\ve}(\mu,V),\\
wRH_1^{QC}(\mu,V)&\coloneqq \bigcap_{0<\ve<1}\bigcup_{0<\delta<1}
wQC_{\delta,\ve}(\mu,V) .
\end{aligned}$$
If one of these conditions holds for every relatively open $V\subset X$ we
omit $V$ from the notation.
\end{definition}

The quantifier pattern defining $wRH_1^{QC}$ is the weak counterpart of the
one defining $RH_1^{QC}$ in \S\ref{secClasses}: for every $\ve$ there is a
$\delta$, and not the other way round. The class obtained by exchanging the
two quantifiers is the weak analogue of $A_\infty^{QC}$, and it plays no role
below; see \S\ref{secOpen}.

Before going on it is worth recording that the dilation is not a device forced
on us by the roughness of $X$: it enlarges the classes already in the
classical setting.

\begin{remark}\label{remExpExample}
Let $\mu=\LL^1$ on $\R$ and $w(x)=e^{x}$. For an interval $I$ of length $\ell$
one has $\left(\avint_{I}w^p\right)^{1/p}\big/\avint_{I}w\approx
p^{-1/p}\ell^{1/p'}$ as $\ell\to\infty$, so $w\notin RH_p(\LL^1)$ for any
$p>1$; equivalently, $\nu=w\,\LL^1$ is not doubling, so
$w\notin A_\infty^{U}(\LL^1)$ either. On the other hand the average of $w$
over $2I$ exceeds that over $I$ by a factor $e^{\ell/2}/2$, which absorbs the
loss, and $w\in wRH_p(\LL^1)$ for every $p<\infty$. Thus
$RH_1^{U}\subsetneq wRH_1^{U}$ already for the Lebesgue measure. This is
\cite[Ex.~2.4]{KinnunenKurkiMudarra}; see also
\cite[Ex.~3.2]{AndersonHytonenTapiola}. The same is true over the basis
$\DD\cup3\DD$, and there the dilation is redundant precisely when $\nu$ is
doubling; see Remark \ref{remWeakDyadicDoubling} and Example
\ref{exWeakNotStrong}.
\end{remark}

\begin{lemma}\label{lemWeakTrivial1}
Let $1<p<\infty$ and let $\nu\in wRH_p(\mu,V)$ with constant $C_{RH}$. Then
$\nu\in wQC_{\delta,\ve}(\mu,V)$ for every $0<\ve<1$, with
$\delta=\left(C_{RH}^{-1}\ve\right)^{p'}$. In particular
$$wRH_1^{U}(\mu,V)\subset wRH_1^{QC}(\mu,V).$$
\end{lemma}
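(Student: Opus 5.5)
The plan is to copy the H\"older argument in the last assertion of Lemma \ref{lemTrivialImplications}, letting the dilated ball appear on the right-hand side. Fix $0<\ve<1$, a restricted ball $\Delta$ with $2\Delta\subset V$, and a Borel set $E$ with $\mu(E\cap\Delta)<\delta\,\mu(\Delta)$, where $\delta=(C_{RH}^{-1}\ve)^{p'}$. Since $\nu\ll\mu$ with density $w$, we write $\nu(E\cap\Delta)=\int_{E\cap\Delta}w\,d\mu$.

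We then apply H\"older's inequality with exponents $p$ and $p'$ on $\Delta$:
$$\nu(E\cap\Delta)\leq\left(\int_{\Delta}w^p\,d\mu\right)^{\frac1p}\mu(E\cap\Delta)^{\frac1{p'}}
=\mu(\Delta)\left(\avint_{\Delta}w^p\,d\mu\right)^{\frac1p}\left(\frac{\mu(E\cap\Delta)}{\mu(\Delta)}\right)^{\frac1{p'}}.$$
Next we invoke \rf{eqWeakRH}, which bounds $\mu(\Delta)\bigl(\avint_\Delta w^p\,d\mu\bigr)^{1/p}$ by $C_{RH}\,\nu(2\Delta)$. This gives
$$\nu(E\cap\Delta)\leq C_{RH}\left(\frac{\mu(E\cap\Delta)}{\mu(\Delta)}\right)^{\frac1{p'}}\nu(2\Delta)<C_{RH}\,\delta^{\frac1{p'}}\,\nu(2\Delta)=\ve\,\nu(2\Delta).$$
This is exactly \rf{eqWeakQC}, so $\nu\in wQC_{\delta,\ve}(\mu,V)$.

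For the inclusion, take $\nu\in wRH_1^U(\mu,V)$. Then $\nu\in wRH_p(\mu,V)$ for some $p$, so the argument above produces, for every $\ve$, an admissible $\delta$. That is precisely the quantifier pattern $\bigcap_\ve\bigcup_\delta$ in the definition of $wRH_1^{QC}$.

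There is no real obstacle. The only points to check are that $0<\mu(\Delta)<\infty$, which holds because $\Delta$ is centred on $X=\supp\mu$ and $\mu$ is Radon, and that $\delta<1$. The latter follows from $C_{RH}\geq1$: taking $E=\Delta$ in \rf{eqWeakRH} together with Jensen's inequality forces $\nu(2\Delta)\geq\nu(\Delta)$, which is consistent with $C_{RH}\geq1$ after normalizing.
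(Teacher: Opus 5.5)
Your argument is correct and is the paper's own proof: H\"older's inequality on $\Delta$ with exponents $p,p'$, then the weak reverse H\"older inequality to bound the $L^p$ factor by $C_{RH}\,\nu(2\Delta)$, then the quantifier pattern $\bigcap_{\ve}\bigcup_{\delta}$ for the inclusion. Your side remark that $\delta<1$ because $C_{RH}\geq1$ is not valid: for $w\equiv1$ and Lebesgue measure on $\R^d$ one may take $C_{RH}=2^{-d}$, so $(C_{RH}^{-1}\ve)^{p'}$ can exceed $1$. This is harmless, since replacing $\delta$ by $\min\{\delta,\tfrac12\}$ only strengthens the hypothesis of the implication, and the paper's proof passes over the same point.
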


\begin{proof}
Let $E\subset\Delta$ be Borel with $2\Delta\subset V$. By H\"older's
inequality and \rf{eqWeakRH},
$$\nu(E)=\int_E w\, d\mu
\leq\left(\avint_{\Delta}w^{p}\, d\mu\right)^{\frac1p}
\mu(\Delta)^{\frac1p}\mu(E)^{1-\frac1p}
\leq C_{RH}\,\nu(2\Delta)
\left(\frac{\mu(E)}{\mu(\Delta)}\right)^{\frac1{p'}} ,$$
so $\mu(E)<\delta\mu(\Delta)$ with $\delta=(C_{RH}^{-1}\ve)^{p'}$ forces
$\nu(E)\leq\ve\,\nu(2\Delta)$.
\end{proof}

\begin{lemma}\label{lemWeakTrivial2}
Let $\nu\in wQC_{\delta_0,\ve_0}(\mu,V)$. Then $\nu$ satisfies \rf{eqBLintro}
with the same $\ve_0$, with $\delta_0$ replaced by
$\delta_0\frac{\mu(\Delta)}{\mu(2\Delta)}$ and with $C_0=1+\ve_0$. In
particular, if $\mu$ is doubling then
$$wQC_{\delta,\ve}(\mu,V)\subset wBL_{\ve}(\mu,V)
\qquad\text{and}\qquad
wRH_1^{QC}(\mu,V)\subset wRH_1^{BL}(\mu,V).$$
\end{lemma}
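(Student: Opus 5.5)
The plan is to verify \rf{eqBLintro} directly by splitting $\Delta$ along $E$. Let $\Delta$ be a restricted ball with $2\Delta\subset V$ and let $E\subset2\Delta$ be Borel with
$$\mu(2\Delta\cap E)<\delta_0\frac{\mu(\Delta)}{\mu(2\Delta)}\,\mu(2\Delta)=\delta_0\,\mu(\Delta).$$
Then $\mu(E\cap\Delta)\leq\mu(2\Delta\cap E)<\delta_0\,\mu(\Delta)$, which is exactly the hypothesis of \rf{eqWeakQC}. The only point to keep track of is that the rescaled threshold makes the smallness relative to $2\Delta$ translate into smallness relative to $\Delta$.

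Applying \rf{eqWeakQC} gives $\nu(E\cap\Delta)\leq\ve_0\,\nu(2\Delta)$. We then write
$$\nu(\Delta)=\nu(\Delta\cap E)+\nu(\Delta\setminus E)\leq\ve_0\,\nu(2\Delta)+\nu(2\Delta\setminus E),$$
and split $\nu(2\Delta)=\nu(2\Delta\cap E)+\nu(2\Delta\setminus E)$. This yields
$$\nu(\Delta)\leq\ve_0\,\nu(2\Delta\cap E)+(1+\ve_0)\,\nu(2\Delta\setminus E),$$
which is \rf{eqBLintro} with the same $\ve_0$, the rescaled $\delta_0$, and $C_0=1+\ve_0$.

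For the doubling case, note that $\mu(2\Delta)\leq C_\mu\,\mu(\Delta)$ for restricted balls. The ball $2\Delta\cap X$ is contained in $B_{2r}(x)$, and doubling in $\R^d$ transfers to restricted balls centred in $X=\supp\mu$. Hence $\delta_0\mu(\Delta)/\mu(2\Delta)\geq\delta_0/C_\mu$. Since a smaller threshold only weakens the hypothesis of \rf{eqBLintro}, the implication holds with the uniform constant $\delta_0/C_\mu\in(0,1)$, so $wQC_{\delta,\ve}(\mu,V)\subset wBL_\ve(\mu,V)$.

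Finally, if $\nu\in wRH_1^{QC}(\mu,V)$, then for every $\ve$ there is some $\delta$ with $\nu\in wQC_{\delta,\ve}(\mu,V)\subset wBL_\ve(\mu,V)$. Thus $\nu$ lies in the intersection over $\ve$ of the classes $wBL_\ve(\mu,V)$, which is $wRH_1^{BL}(\mu,V)$. No step is a real obstacle; the only care needed is the bookkeeping of the two thresholds and using $E\cap\Delta$ rather than $E$ in \rf{eqWeakQC}.
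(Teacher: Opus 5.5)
Your proposal is correct and follows the paper's proof essentially line by line. You make the same reduction of the rescaled threshold to $\mu(E\cap\Delta)<\delta_0\,\mu(\Delta)$, the same splitting of $\nu(\Delta)$ along $E$ followed by $\nu(2\Delta)=\nu(2\Delta\cap E)+\nu(2\Delta\setminus E)$, and the same use of $\mu(\Delta)/\mu(2\Delta)\geq C_\mu^{-1}$ to make $\delta_0$ independent of $\Delta$. Your explicit quantifier check for $wRH_1^{QC}\subset wRH_1^{BL}$ is a detail the paper leaves implicit.
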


\begin{proof}
Let $E\subset2\Delta\subset V$ be Borel with
$\mu(2\Delta\cap E)<\delta_0\frac{\mu(\Delta)}{\mu(2\Delta)}\mu(2\Delta)$.
Then $\mu(\Delta\cap E)\leq\mu(2\Delta\cap E)<\delta_0\mu(\Delta)$, so
\rf{eqWeakQC} gives $\nu(E\cap\Delta)\leq\ve_0\nu(2\Delta)$ and hence
$$\nu(\Delta)=\nu(\Delta\cap E)+\nu(\Delta\setminus E)
\leq\ve_0\nu(2\Delta\cap E)+\left(1+\ve_0\right)\nu(2\Delta\setminus E).$$
If $\mu$ is doubling then $\mu(\Delta)/\mu(2\Delta)\geq C_\mu^{-1}$, so the
admissible $\delta_0$ may be taken independent of $\Delta$.
\end{proof}

The remaining inclusion, $wRH_1^{BL}\subset wRH_1^{U}$, is Theorem
\ref{theoAintro}, and with it Corollary \ref{coroBintro} follows: the three
classes coincide. We stress that the equality
$wRH_1^{U}(\mu,V)=wRH_1^{QC}(\mu,V)$ is, in this generality, a theorem of
Kinnunen, Kurki and Mudarra \cite[Thm.~4.1]{KinnunenKurkiMudarra}, proved for
a complete doubling metric measure space and a weight defined only on an open
subset; what \S\ref{secBL} adds is the third class.

\section{The Bennewitz--Lewis theorem}
\label{secBL}

This section proves Theorem \ref{theoAintro}, and with it the inclusion
$wRH_1^{BL}\subset wRH_1^{U}$ that closes Corollary \ref{coroBintro}. The
proof has three parts: the passage from restricted balls to the basis
$\DD\cup3\DD$, the absolute continuity of $\nu$ with respect to $\mu$, and the
estimate itself.

\subsection{The dyadic form of the condition}

\begin{definition}\label{defWeakDyadic}
Let $\mu$ be a Radon measure in $\R^d$, let $\DD$ be a dyadic grid associated
to $X=\supp\mu$, let $V\subset X$ be relatively open and put
$$\QQ_V\coloneqq \{Q\in\DD:\, 3Q\subset V\} .$$
Given a Radon measure $\nu$ supported in $X$, we say that
$\nu\in wBL_{\ve_0}(\mu,\QQ_V)$ if there are $0<\delta_0<1$ and $C_0\geq1$
such that
\begin{equation}\label{eqWeakBLDyadic}
	\mu(3Q\cap E)<\delta_0\,\mu(3Q)
	\implies
	\nu(Q)\leq\ve_0\,\nu(3Q\cap E)+C_0\,\nu(3Q\setminus E)
\end{equation}
for every $Q\in\QQ_V$ and every Borel set $E\subset3Q$; and we say that
$\nu\in wRH_p(\mu,\QQ_V)$ if $\nu\ll\mu$, $w=\frac{d\nu}{d\mu}>0$ $\mu$-a.e.\
and
$$\left(\avint_{Q}w^p\, d\mu\right)^{\frac1p}
\leq C_{RH}\,\frac{\nu(3Q)}{\mu(Q)}\qquad\text{for every }Q\in\QQ_V .$$
\end{definition}

\begin{lemma}\label{lemWeakBLDyadic}
Let $\mu$ be a doubling Radon measure in $\R^d$, let $V\subset X$ be
relatively open and let $\nu$ be a Radon measure supported in $X$. There is a
constant $C_\DD$ depending only on $d$, $C_\mu$ and the parameters of $\DD$ such
that
$$wBL_{\ve_0}(\mu,V)\subset wBL_{C_\DD\ve_0}(\mu,\QQ_V)
\qquad\text{whenever }\ve_0<C_\DD^{-1} .$$
In particular $wRH_1^{BL}(\mu,V)\subset wRH_1^{BL}(\mu,\QQ_V)$.
\end{lemma}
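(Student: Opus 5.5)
The plan is to cover $Q$ by a bounded number of restricted balls whose doubles lie inside $3Q$. We then apply \rf{eqBLintro} on each ball and sum. Fix $Q\in\QQ_V$ and a Borel set $E\subset 3Q$ with $\mu(3Q\cap E)<\delta_1\,\mu(3Q)$, where $\delta_1$ is to be chosen.

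First we choose the balls. Put $s\coloneqq \frac2{A_0}r_Q=\ell(Q)$ and take a maximal $s$-separated family of points $\{x_j\}_{j=1}^N\subset Q$. The balls $B_{s/2}(x_j)$ are disjoint and contained in $B(z_Q,2r_Q)$, so comparing Lebesgue measures gives $N\leq(4r_Q/s)^d=(2A_0)^d$. By maximality, $Q\subset\bigcup_j\Delta_j$ with $\Delta_j\coloneqq\Delta_s(x_j)$. Since $x_j\in Q\subset B_Q$, we get $2\Delta_j\subset B(z_Q,r_Q+2s)\cap X=\bigl(1+\frac4{A_0}\bigr)B_Q\cap X$. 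Property N\ref{eqrem3QNeighbors2} then gives $2\Delta_j\subset3Q\subset V$. This inclusion is the one step needing care: it is exactly why the radius is chosen as $\ell(Q)$ and not $r_Q$, because $2B_Q\cap X$ need not be contained in $3Q$.

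Next we transfer the smallness of $E$ to each ball. By \rf{eqTrivialInclusionDQP}, $3Q\subset 3\lambda B_Q\subset B(x_j,(3\lambda+1)r_Q)$, and $(3\lambda+1)r_Q$ is a fixed multiple of $2s$. Iterating Definition \ref{defDoubling} therefore gives $\mu(3Q)\leq C'\mu(2\Delta_j)$, with $C'$ depending only on $C_\mu$ and the grid parameters. Set $\delta_1\coloneqq\delta_0/C'$. Then
$$\mu(2\Delta_j\cap E)\leq\mu(3Q\cap E)<\delta_1\mu(3Q)\leq\delta_0\,\mu(2\Delta_j).$$
So \rf{eqBLintro}, applied to the Borel set $E\cap2\Delta_j\subset2\Delta_j\subset V$, yields
$$\nu(\Delta_j)\leq\ve_0\,\nu(2\Delta_j\cap E)+C_0\,\nu(2\Delta_j\setminus E).$$

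Finally we sum over $j$. Using $Q\subset\bigcup_j\Delta_j$ and $2\Delta_j\subset3Q$,
$$\nu(Q)\leq\sum_{j=1}^N\nu(\Delta_j)\leq N\ve_0\,\nu(3Q\cap E)+NC_0\,\nu(3Q\setminus E).$$
This is \rf{eqWeakBLDyadic} with constants $C_\DD\ve_0$, $\delta_1$ and $NC_0$, where $C_\DD\coloneqq(2A_0)^d$. The restriction $\ve_0<C_\DD^{-1}$ only ensures that $C_\DD\ve_0<1$, so that the new parameter is admissible in the definition.

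For the last assertion, take $\nu\in wRH_1^{BL}(\mu,V)$ and $\ve\in(0,1)$. Then $\nu\in wBL_{\ve/C_\DD}(\mu,V)\subset wBL_{\ve}(\mu,\QQ_V)$, so $\nu\in wRH_1^{BL}(\mu,\QQ_V)$, with the dyadic class defined analogously as $\bigcap_{0<\ve<1}wBL_\ve(\mu,\QQ_V)$.
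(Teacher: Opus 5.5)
Your proof is correct and follows the paper's argument step by step: boundedly many restricted balls of radius $\ell(Q)$ centred in $Q$, the inclusion $2\Delta_j\subset3Q\subset V$ from N\ref{eqrem3QNeighbors2}, the doubling comparison $\mu(3Q)\lesssim\mu(2\Delta_j)$ used to shrink $\delta_0$, and summation of \rf{eqBLintro} over $j$. The only difference is that you count the balls where the paper bounds the overlap of the $2\Delta_j$, which amounts to the same thing. One side remark is inaccurate: since $\lambda\geq6$, \rf{eqTrivialInclusionDQP} already gives $3B_Q\cap X\subset\lambda B_Q\cap X\subset3Q$, so doubles of balls of radius $r_Q$ centred in $Q$ would also lie in $3Q$, and the choice of radius $\ell(Q)$ is a convenience rather than a necessity.
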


\begin{proof}
Let $Q\in\QQ_V$ and let $E\subset3Q$ be Borel. Choose a maximal
$\frac12\ell(Q)$-separated set of points of $Q$ and let $\{\Delta_j\}_j$ be
the restricted balls of radius $\ell(Q)$ centred at them. They cover $Q$; and
since $\diam Q\leq2r_Q=A_0\ell(Q)$, their number and the overlap of their
dilates are bounded by a dimensional constant,
$$Q\subset\bigcup_j\Delta_j,\qquad \sum_j\chi_{2\Delta_j}\leq C_\DD .$$
Each $2\Delta_j$ is a restricted ball of radius $2\ell(Q)=\frac4{A_0}r_Q$
centred at a point of $Q\subset B_Q$, so
$2\Delta_j\subset\left(\frac4{A_0}+1\right)B_Q\cap X\subset3Q\subset V$ by
N\ref{eqrem3QNeighbors2}. Also $3Q\subset3\lambda B_Q$ by
\rf{eqTrivialInclusionDQP}, and $2\Delta_j$ contains a restricted ball of
radius $\ell(Q)$ centred in $Q$, so iterating the doubling condition a
bounded number $n_d$ of times gives $\mu(3Q)\leq C_\mu^{n_d}\mu(2\Delta_j)$.

Let now $0<\delta_0<1$ and $C_0\geq1$ be the constants of \rf{eqBLintro} for
$\nu$, and set $\wt\delta_0\coloneqq C_\mu^{-n_d}\delta_0$. If
$\mu(3Q\cap E)<\wt\delta_0\mu(3Q)$, then for every $j$
$$\mu(2\Delta_j\cap E)\leq\mu(3Q\cap E)<\wt\delta_0\,\mu(3Q)
\leq\delta_0\,\mu(2\Delta_j),$$
so \rf{eqBLintro} applies to $\Delta_j$ and to $E\cap2\Delta_j$, and summing
in $j$,
$$\begin{aligned}
\nu(Q)&\leq\sum_j\nu(\Delta_j)
\leq\ve_0\sum_j\nu(2\Delta_j\cap E)+C_0\sum_j\nu(2\Delta_j\setminus E)\\
&\leq C_\DD\ve_0\,\nu(3Q\cap E)+C_\DD C_0\,\nu(3Q\setminus E).\qedformula
\end{aligned}$$\qedOut
\end{proof}

Before going on we record that the dilation is not redundant in Definition
\ref{defWeakDyadic} either, and that the obstruction is again the one isolated
in Remark \ref{remNoDyadicAnalogue}: it is the doubling of $\nu$, and nothing
else, that separates the weak dyadic condition from the strong one.

\begin{remark}\label{remWeakDyadicDoubling}
Let $\mu$ be $\DD$-doubling in $Q_0$ with constant $C_\mu$, let $\nu=w\mu$ be
$\DD$-doubling in $Q_0$ with constant $C_\nu$ and suppose that
$$\left(\avint_{Q}w^p\, d\mu\right)^{\frac1p}
\leq C_{RH}\,\frac{\nu(3Q\cap Q_0)}{\mu(Q)}\qquad
\text{for every }Q\in\DD(Q_0).$$
Then $\nu\in RH_p(\mu,\BB_{Q_0})$ with constant
$C_\mu^{1/p'}C_\nu C_{RH}$. Indeed $Q\in\NN(Q)$, so
$\nu(3Q\cap Q_0)\leq C_\nu\nu(Q)$ and the hypothesis becomes the strong
inequality on $\DD(Q_0)$; and if $B=3Q\cap Q_0\in3\DD(Q_0)$, one can also see that the strong reverse H\"older inequality holds by standard arguments.
\end{remark}

\begin{example}\label{exWeakNotStrong}
Without the doubling of $\nu$ the two conditions genuinely differ, and again
the Lebesgue measure suffices. Fix $1<p<\infty$ and $0<s<\frac1p$, and for
$n\geq1$ let
$$g_n(x)\coloneqq \begin{cases}
2^{n(1-s)}|x|^{-s}, & -1\leq x<0,\\
2^{n}, & 0\leq x<2^{-n},\\
1, & 2^{-n}\leq x<1 .
\end{cases}$$
Since $sp<1$, both $\avint_{[-1,1)}g_n\, d\mu$ and
$\big(\avint_{[-1,1)}g_n^p\, d\mu\big)^{1/p}$ are comparable to
$2^{n(1-s)}$, with constants depending only on $p$ and $s$. Write
$S_i\coloneqq [0,2^{-n+i})$ and $R_i\coloneqq [-2^{-n+i},0)$ for
$0\leq i\leq n$; these are adjacent dyadic cubes of the same generation, so
$R_i\subset3S_i$. A direct computation gives
$$1\leq\int_{S_i}g_n\, d\mu\leq2,\qquad
\int_{S_i}g_n^q\, d\mu\geq2^{n(q-1)},\qquad
\int_{R_i}g_n\, d\mu=\frac{2^{\,i(1-s)}}{1-s} ,$$
whence, for every $q>1$,
$$\frac{\big(\avint_{S_i}g_n^q\, d\mu\big)^{\frac1q}}
{\avint_{S_i}g_n\, d\mu}\geq\frac{2^{\,i(1-1/q)}}2,
\qquad
\frac{\big(\avint_{S_i}g_n^p\, d\mu\big)^{\frac1p}}
{\mu(S_i)^{-1}\int_{3S_i}g_n\, d\mu}
\leq(1-s)\,2^{\,\frac1p-\frac ip+si}\leq2 ,$$
the last step holding because $s<\frac1p$. Hence the weak inequality holds on every
$S_i$ with a constant independent of $n$, while the strong one fails on $S_n$
by a factor $2^{\,n(1-1/q)-1}$.

To assemble a single weight, let $\mu$ be the Lebesgue measure on $X=[0,1]$,
let $\DD$ be the standard dyadic grid, put $P_k\coloneqq [2^{-k-1},2^{-k})$
and let $\Phi_k:[-1,1)\to P_k$ be the increasing affine bijection, which maps
the dyadic grid onto $\DD(P_k)$. Define
$$w\coloneqq \sum_{k\geq1}
\frac{g_k\circ\Phi_k^{-1}}{\avint_{[-1,1)}g_k\, d\mu}\,\chi_{P_k},
\qquad\text{so that }\avint_{P_k}w\, d\mu=1\text{ for every }k .$$
Every cube of $\DD([0,1))$ is contained in some $P_k$, or is one of the
$T_m\coloneqq [0,2^{-m})=\bigcup_{k\geq m}P_k$. On the first kind the estimates
above apply verbatim; on the second, $\avint_{T_m}w\, d\mu=1$ and
$$\avint_{T_m}w^p\, d\mu=2^{m}\sum_{k\geq m}2^{-k-1}\avint_{P_k}w^p\, d\mu
\leq C(p,s),$$
because $\avint_{P_k}w^p\, d\mu=\avint_{[-1,1)}g_k^p\, d\mu\big/
\big(\avint_{[-1,1)}g_k\, d\mu\big)^{p}$ is bounded independently of $k$.
Hence $w\in wRH_p(\mu,\QQ_X)$, while for every $q>1$ the strong inequality
fails inside $P_k$ by a factor $2^{\,k(1-1/q)-1}\to\infty$, so
$w\notin RH_q(\mu,\BB_{Q_0})$ for any $q>1$, with $Q_0=[0,1)$. By Remark
\ref{remWeakDyadicDoubling} the measure $\nu=w\mu$ is not $\DD$-doubling, as
one also sees directly: $\nu(3S_i)\gtrsim2^{\,i(1-s)}\nu(S_i)$ inside each
$P_k$.
\end{example}

\begin{figure}[ht]
\centering
\begin{tikzpicture}[
	scale=0.95, transform shape,
	lbl/.style={font=\scriptsize,inner sep=1.5pt},
	aux/.style={densely dotted,gray}
]
	\node[lbl,anchor=west,font=\footnotesize\itshape] at (-5.1,3.25)
		{(a) the gadget $g_n$ on $[-1,1)$, drawn with $n=3$ and
		 $s=\frac12$, logarithmic vertical scale};
	\draw[->,gray] (-5.0,0) -- (5.1,0);
	\node[lbl,gray,anchor=north] at (5.1,-0.06) {$x$};
	\foreach \x in {-4.5,-2.25,0,0.5625,2.25,4.5}
		{\draw[gray] (\x,0) -- (\x,-0.09);}
	\node[lbl,anchor=north] at (-4.5,-0.09) {$-1$};
	\node[lbl,anchor=north] at (-2.25,-0.09) {$-\frac12$};
	\node[lbl,anchor=north east] at (0.02,-0.09) {$0$};
	\node[lbl,anchor=north west] at (0.60,-0.09) {$2^{-n}$};
	\node[lbl,anchor=north] at (2.25,-0.09) {$\frac12$};
	\node[lbl,anchor=north] at (4.5,-0.09) {$1$};
	\draw[very thick,domain=-1:-0.021,samples=160,smooth]
		plot ({4.5*\x},{0.5*(1.5+0.5*ln(-1/\x)/ln(2))+0.4});
	\draw[aux,->] (-0.0945,2.56) -- (-0.0945,2.95);
	\draw[very thick] (0,1.9) -- (0.5625,1.9);
	\fill (0,1.9) circle (1.1pt);
	\draw[aux] (0.5625,0.4) -- (0.5625,1.9);
	\draw[aux] (0,0.4) -- (0.5625,0.4);
	\draw[very thick] (0.5625,0.4) -- (4.5,0.4);
	\node[lbl,anchor=west] at (0.64,1.92) {$2^{n}$};
	\node[lbl,anchor=south] at (2.6,0.45) {$1$};
	\node[lbl,anchor=south east] at (-2.1,1.35) {$2^{n(1-s)}|x|^{-s}$};
	\draw[|-|] (-2.22,-0.62) -- (-0.03,-0.62);
	\node[lbl,anchor=north] at (-1.12,-0.65) {$R_{n-1}$};
	\draw[|-|] (0.03,-0.62) -- (2.22,-0.62);
	\node[lbl,anchor=north] at (1.12,-0.65) {$S_{n-1}$};
	\draw[|-|] (-2.22,-1.25) -- (4.47,-1.25);
	\node[lbl,anchor=north] at (1.12,-1.28) {$3S_{n-1}$};

	\node[lbl,anchor=west,font=\footnotesize\itshape] at (-5.1,-1.95)
		{(b) one rescaled copy of $g_k$ on each $P_k=[2^{-k-1},2^{-k})$};
	\draw[->,gray] (-5.0,-3.3) -- (5.1,-3.3);
	\node[lbl,gray,anchor=north] at (5.1,-3.36) {$x$};
	\foreach \x in {-4.5,-3.9375,-3.375,-2.25,0,4.5}
		{\draw[gray] (\x,-3.3) -- (\x,-3.39);}
	\node[lbl,anchor=north] at (-4.5,-3.39) {$0$};
	\node[lbl,anchor=north] at (0,-3.39) {$\frac12$};
	\node[lbl,anchor=north] at (4.5,-3.39) {$1$};
	\foreach \a/\m/\b/\h/\l in {0/2.25/4.5/0.45/0.30,
		-2.25/-1.125/0/0.60/0.15, -3.375/-2.8125/-2.25/0.75/0.08}
		{\draw[very thick] (\a,-3.22) -- (\m-0.75*\m+0.75*\a,-3.17)
			.. controls (\m-0.18,-3.15) and (\m-0.05,{-3.3+\h-0.15}) ..
			(\m-0.02,{-3.3+\h});
		 \draw[very thick] (\m,{-3.3+0.72*\h}) -- (\m+\l,{-3.3+0.72*\h});
		 \draw[very thick] (\m+\l,-3.22) -- (\b,-3.22);
		 \draw[aux] (\m+\l,-3.22) -- (\m+\l,{-3.3+0.72*\h});}
	\node[lbl,anchor=north] at (2.25,-3.65) {$P_1$};
	\node[lbl,anchor=north] at (-1.125,-3.65) {$P_2$};
	\node[lbl,anchor=north] at (-2.81,-3.65) {$P_3$};
	\node[lbl] at (-4.15,-3.0) {$\cdots$};
\end{tikzpicture}
\caption{Example \ref{exWeakNotStrong}. In (a) the whole of $g_n$ sits above
the value $1$ of its right-hand tail; the plateau $2^{n}$ occupies the
fraction $2^{-i}$ of $S_i=[0,2^{-n+i})$, which is what makes the strong
inequality on $S_i$ fail by $2^{\,i(1-1/q)}$, while the mass that the
singularity puts on the neighbour $R_i\subset3S_i$ is large enough to save the
weak one. The two cubes are adjacent and of the same generation, so nothing
but the dilation separates the two inequalities. In (b) a copy of $g_k$,
normalized to have average $1$, is placed on $P_k$ by an affine map, so that
its singularity sits at the midpoint of $P_k$; the copies become taller and
narrower as $k$ grows, and the strong inequality fails on every scale at
once.}
\label{figGadget}
\end{figure}

\subsection{Absolute continuity}

The hypothesis of Theorem \ref{theoAintro} does not presuppose $\nu\ll\mu$;
this has to be proved, and it is here that the ambient Euclidean structure is
used for the first time. The argument needs a Lebesgue differentiation
theorem for $\nu$, which is \emph{not} assumed doubling. In a general space of
homogeneous type this would be unavailable; in $\R^d$ it is a standard
consequence of the Besicovitch covering theorem, valid for every Radon
measure; see \cite{Mattila}, for instance.

\begin{lemma}\label{lemBLAC}
Let $\mu$ be a doubling Radon measure in $\R^d$, let $V\subset X$ be
relatively open and let $\nu\in wBL_{\ve_0}(\mu,\QQ_V)$ with
$\ve_0\leq\ve_1$, where $\ve_1>0$ depends only on $d$ and $C_\mu$. Then
$\nu|_V\ll\mu|_V$.
\end{lemma}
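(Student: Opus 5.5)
Suppose $\nu|_V\not\ll\mu|_V$ and derive a contradiction. By the Lebesgue decomposition there is a compact $F\subset V$ with $\mu(F)=0$ and $\nu(F)>0$. The plan is to find a cube $Q\in\QQ_V$ on which \rf{eqWeakBLDyadic} fails when it is tested against the set $E=3Q\cap U$, where $U\supset F$ is a small relatively open neighbourhood.

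\textbf{Choosing the cube.} By the Besicovitch differentiation theorem applied to $\nu$ and the Borel set $F$ (valid for every Radon measure in $\R^d$), $\nu$-a.e.\ $x\in F$ satisfies
$$\nu(B_r(x)\setminus F)=o(\nu(B_r(x)))\qquad\text{as } r\to0 .$$
I need this for the dyadic sets $3Q_k(x)$, so I will work with balls comparable to them. Using \rf{eqTrivialInclusionDQP}, and N\ref{eqrem3QNeighbors2} together with $Q\subset B_Q$, one has
$$Q_k(x)\subset B(x,2r_{Q_k(x)}),\qquad 3Q_k(x)\subset B(x,4\lambda r_{Q_k(x)}).$$
The difficulty is that $\nu$ is not doubling, so $\nu(Q_k(x))$ need not be comparable to $\nu(3Q_k(x))$. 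The step I expect to be the main obstacle is therefore to find, for $\nu$-many $x\in F$, a scale $k$ with
$$\nu(Q_k(x))\geq c\,\nu(3Q_k(x))$$
and simultaneously $\nu(3Q_k(x)\setminus F)\leq\eta\,\nu(3Q_k(x))$. I would try the following. First apply the density theorem for $F$. Then, among the disjoint cubes $R\in\NN(Q_k(x))$, pick one maximizing $\nu(R)$. Since $\#\NN(Q)\leq N_d$ (a dimensional count, as in the proof of Lemma \ref{lemWhitney}), this gives $\nu(R)\geq N_d^{-1}\nu(3Q_k(x))$. I then replace $Q_k(x)$ by $R$ and use N\ref{eqrem3QNeighbors0} together with $3R\subset B(z_R,3\lambda r_R)$, which lies in a ball about $x$ of radius about $5\lambda r_R$. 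For the density statement to survive this enlargement, I would instead run a covering argument: cover $F$ by the maximal cubes $Q$ with $\nu(3Q\setminus F)\leq\eta\nu(3Q)$ where possible. Alternatively, I would use that $\nu$-a.e.\ $x$ has $\nu(B_{Cr}(x))\leq C'\nu(B_r(x))$ along a sequence $r\to0$, which holds for any Radon measure in $\R^d$ by a standard argument. A sequence of scales is all that is needed.

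\textbf{Contradiction.} With $Q$ chosen so that $3Q\subset V$, $\nu(Q)\geq c\,\nu(3Q)$ and $\nu(3Q\setminus F)\leq\eta\,\nu(3Q)$, I take $E=3Q\cap U$ with $U\supset F$ open and $\mu(U\cap 3Q)<\delta_0\mu(3Q)$. Such a $U$ exists because $\mu(F)=0$ and $\mu$ is outer regular. Then \rf{eqWeakBLDyadic} gives
$$c\,\nu(3Q)\leq\nu(Q)\leq\ve_0\,\nu(3Q)+C_0\,\nu(3Q\setminus U)\leq(\ve_0+C_0\eta)\,\nu(3Q).$$
Choosing $\eta<c/(2C_0)$, this is a contradiction as soon as $\ve_0<c/2$. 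So $\ve_1:=c/2$ works, with $c$ depending only on $d$ and on the grid constants through $N_d$ and the ball comparison, hence only on $d$ and $C_\mu$.
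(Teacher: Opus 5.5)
Read along its ``alternatively'' branch, your proposal is the paper's proof. The paper first shows that the set $G$ of points where $\liminf_{r\to0}\nu(\kappa B_r(x))/\nu(B_r(x))\geq C_\nu$ is $\nu$-null for a \emph{fixed} $C_\nu>2C_\mu^{N}$; your $C'$ must likewise be a fixed constant, say any $C'>C^{d}$, or else $c$, and hence $\ve_1$, is not uniform. Then, at a $\nu$-density point of $F$ outside $G$, it covers a doubling ball $B_j$ by boundedly many cubes with $2B_j\cap X\subset 3Q_j^k\subset\kappa B_j$, keeps the one of largest $\nu$-mass, and tests \rf{eqWeakBLDyadic} with $E=F\cap 3Q_j$; no open $U$ is needed, since $\mu(F)=0$.

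Your first route, maximizing $\nu$ over $\NN(Q_k(x))$, does not close on its own. It bounds $\nu(R)$ below by a multiple of $\nu(3Q_k(x))$ rather than of $\nu(3R)$, and it is exactly the doubling scales that bridge this gap.
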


\begin{proof}
Put $\kappa\coloneqq 2\lambda A_0\rho_0$ and let $N$ be such that
$\mu(\kappa B)\leq C_\mu^{N}\mu(B)$ for every ball $B$ centred on $X$; both
are dimensional. Fix $C_\nu>2C_\mu^{N}$ and let
$$G\coloneqq \left\{x\in\supp\nu:\
\liminf_{r\to0}\frac{\nu(\kappa B_r(x))}{\nu(B_r(x))}\geq C_\nu\right\} .$$

We first check that $\nu(G)=0$. If $x\in G$ there is $r_x>0$ with
$\nu(\kappa B_r(x))\geq\frac{C_\nu}2\nu(B_r(x))$ for every $0<r<r_x$, whence,
writing $B=B_r(x)$ and iterating while $\kappa^kr\leq r_x$,
$$\frac{\nu(\kappa^kB)}{\mu(\kappa^kB)}
\geq\left(\frac{C_\nu}{2C_\mu^{N}}\right)^{k}\frac{\nu(B)}{\mu(B)} .$$
For each small $r$ take $k=k(r)$ maximal with $\kappa^kr\leq r_x$; then
$\kappa^kr>r_x/\kappa$, so the left-hand side is at most
$\nu(B_{r_x}(x))/\mu(B_{r_x/\kappa}(x))$, a quantity independent of $r$, while
$k(r)\to\infty$ as $r\to0$. Since $C_\nu>2C_\mu^{N}$ we conclude
\begin{equation}\label{eqLimitBallsE}
	\lim_{r\to0}\frac{\nu(B_r(x))}{\mu(B_r(x))}=0
	\qquad\text{for every }x\in G .
\end{equation}
Let $\ve>0$ and, restricting to a bounded portion of $G$ if necessary, let $U$
be open with $G\subset U$ and $\mu(U)<\infty$. By \rf{eqLimitBallsE} every
$x\in G$ carries a ball $B_x\subset U$ centred at $x$ with
$\nu(5B_x)<\ve\mu(5B_x)$; extracting by the $5r$-covering theorem a disjoint
subfamily $\{B_k\}$ whose dilates $5B_k$ cover $G$,
$$\nu(G)\leq\sum_k\nu(5B_k)\leq\ve\sum_k\mu(5B_k)
\leq C_\mu^{3}\ve\sum_k\mu(B_k)\leq C_\mu^{3}\ve\,\mu(U),$$
and letting $\ve\to0$ gives $\nu(G)=0$.

Suppose now that $\nu|_V\not\ll\mu|_V$, so that there is a Borel set
$F\subset V$ with $\nu(F)>0$ and $\mu(F)=0$. Since $\nu(G)=0$, we may pick a
point $x\in F\setminus G$ which is a point of $\nu$-density of $F$, and a
sequence of balls $B_j=B_{r_j}(x)$ with $r_j\to0$ and
\begin{equation}\label{eqDoublingBalls}
	\nu(\kappa B_j)\leq C_\nu\,\nu(B_j) .
\end{equation}
Cover $B_j\cap X$ by the cubes $\{Q_j^k\}_{k\leq N_d}$ of the generation
determined by $r(B_j)\leq\ell(Q_j^k)<\rho_0 r(B_j)$; their number $N_d$ is
dimensional. Since
$r(B_j)>\rho_0^{-1}\ell(Q_j^k)=2(\rho_0A_0)^{-1}r_{Q_j^k}$, one checks from
\rf{eqTrivialInclusionDQP} that
\begin{equation}\label{eqInclusionsImpressive}
	2B_j\cap X\subset3Q_j^k\subset\kappa B_j .
\end{equation}
Let $Q_j$ be the cube $Q_j^k$ of largest $\nu$-measure, so that
$\nu(B_j)\leq N_d\,\nu(Q_j)$. For $j$ large we have $3Q_j\subset V$, that is
$Q_j\in\QQ_V$, because $V$ is relatively open. Combining,
$$\frac1{N_dC_\nu}
\by{\rf{eqDoublingBalls}}\leq\frac{N_d^{-1}\nu(B_j)}{\nu(\kappa B_j)}
\by{\rf{eqInclusionsImpressive}}\leq\frac{N_d^{-1}\nu(B_j)}{\nu(3Q_j)}
\leq\frac{\nu(Q_j)}{\nu(3Q_j)} .$$
As $\mu(F\cap3Q_j)=0<\delta_0\mu(3Q_j)$ whatever $\delta_0$ is,
\rf{eqWeakBLDyadic} applies with $E=F\cap 3Q_j$ and yields
$$\frac1{N_dC_\nu}\leq\ve_0\frac{\nu(3Q_j\cap F)}{\nu(3Q_j)}
+C_0\frac{\nu(3Q_j\setminus F)}{\nu(3Q_j)}
\by{\rf{eqInclusionsImpressive}}\leq
\ve_0+C_0C_\nu\,\frac{\nu(\kappa B_j\setminus F)}{\nu(\kappa B_j)} .$$
The last quotient tends to $0$ because $x$ is a point of $\nu$-density of $F$,
so letting $j\to\infty$ gives $\frac1{N_dC_\nu}\leq\ve_0$. Thus the lemma
holds with $\ve_1\coloneqq \left(2N_dC_\nu\right)^{-1}$, which depends only on
$d$ and $C_\mu$.
\end{proof}

\subsection{The auxiliary function}

Let $V\subset X$ be relatively open and bounded, let $\W$ be a Whitney
decomposition of $V$ as in Lemma \ref{lemWhitney} if $V\neq X$ and
$\W\coloneqq \{X\}$ otherwise, and set
\begin{equation}\label{eqDefPhi}
	\phi(x)\coloneqq \sum_{Q\in\W}\chi_Q(x)\,\mu(Q),\qquad x\in V .
\end{equation}
Thus $\phi(x)$ is the measure of the Whitney cube containing $x$, and it
decreases as $x$ approaches the relative boundary of $V$. As explained in the
introduction, this is the function that replaces the power of the distance to
the boundary used in \cite[Lemma 3.1]{BennewitzLewis}. Both this section and
\S\ref{secGehring} use exactly two properties of $\phi$. The first is
immediate: $\phi\leq\mu(V)$. The second is the source of the geometric factor
in Theorems \ref{theoAintro} and \ref{theoCintro}.

\begin{lemma}\label{lemInfPhi}
Let $\mu$ be a doubling Radon measure in $\R^d$, let $V\subset X$ be a bounded
relatively open set and let $K\subset V$ be compact. Then
$$\inf_{K}\phi\geq C_\mu^{-k_d-2}
\left(\frac{\diam K}{\dist(K,X\setminus V)}\right)
^{-\frac{\log C_\mu}{\log\rho_0}}\mu(K),$$
with $k_d$ depending only on $d$ and on the parameters of $\DD$.
\end{lemma}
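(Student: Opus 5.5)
Fix $x\in K$ and let $Q\in\W$ be the Whitney cube containing $x$, so that $\phi(x)=\mu(Q)$. The plan is to compare $\mu(Q)$ with $\mu(K)$ by exhibiting a chain of dyadic ancestors of $Q$ whose last member has radius comparable to $\diam K+\dist(K,X\setminus V)$, and then to lose one factor $C_\mu$ per generation through DN\ref{itemDN2} (or directly through the doubling of $\mu$, since $\mu$ is genuinely doubling here). The case $V=X$, where $\W=\{X\}$ and $\phi\equiv\mu(X)\geq\mu(K)$, is trivial, so assume $F=X\setminus V\neq\varnothing$.

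First, Wh\ref{itemWhRadius} gives $r_Q\geq\dist(Q,F)/(\rho_0(\alpha\lambda+1))\geq \dist(x,F)/(C\rho_0(\alpha\lambda+1))$ up to the diameter of $Q$, which itself is at most $2r_Q$; hence $r_Q\gtrsim\dist(x,F)\geq\dist(K,F)=:\delta_K$, with dimensional constants. Set $D\coloneqq\diam K+\delta_K$ (if $\diam K<\delta_K$ one simply uses $\delta_K$ and the exponent term is harmless, so the ratio appearing in the statement should be read as $\max\{1,\cdot\}$ or the constant $C_\mu^{-k_d}$ absorbs it). Let $Q^{(m)}$ be the ancestor of $Q$ of generation $m$ steps up, with $m$ the least integer such that $r_{Q^{(m)}}\geq D$. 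Since $r_{Q^{(m)}}=\rho_0^m r_Q$, we get $\rho_0^{m}\lesssim D/\delta_K$, that is $m\leq \log(D/\delta_K)/\log\rho_0+k_d'$ with $k_d'$ dimensional.

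Second, $K\subset B(x,\diam K)\subset B(z_{Q^{(m)}},C r_{Q^{(m)}})$ because $x\in Q^{(m)}\subset B_{Q^{(m)}}$, so by doubling of $\mu$ (Definition \ref{defDoubling}) applied a dimensional number of times, $\mu(K)\leq C_\mu^{k_d''}\mu(\Delta_{Q^{(m)}})\leq C_\mu^{k_d''}\mu(Q^{(m)})$. Iterating $\mu(\widehat S)\leq C_\mu\mu(S)$ (the last assertion of Lemma \ref{lemDoublingNeighbors} together with DN\ref{itemDN2}, or directly via D\ref{itemD4} and doubling) along the chain $Q\subset Q^{(1)}\subset\dots\subset Q^{(m)}$ yields $\mu(Q^{(m)})\leq C_\mu^{m}\mu(Q)$. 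Combining, $\mu(K)\leq C_\mu^{k_d''+k_d'}(D/\delta_K)^{\log C_\mu/\log\rho_0}\mu(Q)$, and since $D/\delta_K\leq 2\max\{1,\diam K/\delta_K\}$ the factor $2^{\log C_\mu/\log\rho_0}\leq C_\mu$ is absorbed into $C_\mu^{-k_d-2}$.

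The only delicate point is bookkeeping: ancestors of $Q$ leave $V$, so one must use the global doubling of $\mu$ rather than the localized $\DD$-doubling, and one must make sure the ratio $\diam K/\dist(K,X\setminus V)$ (possibly small) is handled by the $\max$ with $1$; everything else is counting generations.
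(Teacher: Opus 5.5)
Your argument is correct and is essentially the paper's: Wh\ref{itemWhRadius} gives $r_{Q_x}\gtrsim\dist(K,X\setminus V)$, you climb roughly $\log_{\rho_0}$ of the ratio generations, paying one factor $C_\mu$ per generation via DN\ref{itemDN2}, and a bounded number of further doublings then swallows $K$ (the paper does this last step with $K\subset3\mathcal P^{k_0}(Q_x)$ and DN\ref{itemDN1}, you with a ball around $z_{Q^{(m)}}$). Your insistence on reading the ratio as $\max\{1,\cdot\}$ is right and is not mere bookkeeping: the $k_0$ of \rf{eqKZero} becomes negative when $\diam K\ll\dist(K,X\setminus V)$, and the bound as printed genuinely fails there (take $K=\{a\}$ for an isolated atom $a$ of $\mu$, so that $\diam K=0$ while $\phi(a)\leq\mu(V)$); your parenthetical alternative, that the constant $C_\mu^{-k_d}$ ``absorbs'' the factor, does not work, since only the $\max$ version is true, but that version costs nothing downstream, where only an upper bound on the ratio is ever used.
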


\begin{proof}
If $V=X$ then $\phi\equiv\mu(X)\geq\mu(K)$ and there is nothing to prove, so
let $F\coloneqq X\setminus V\neq\varnothing$. Let $x\in K$ and let $Q_x\in\W$
contain $x$. By Wh\ref{itemWhRadius} and the triangle inequality,
$$\dist(K,F)\leq\dist(x,F)\leq\dist(Q_x,F)+\diam Q_x
\leq\left(\rho_0(\alpha\lambda+1)+2\right)r_{Q_x},$$
so $r_{Q_x}$ is bounded below by a fixed multiple of $\dist(K,F)$. On the
other hand, writing $S=\mathcal P^{k}(Q_x)$ for the $k$-th ancestor of $Q_x$,
whose exterior radius is $\rho_0^{k}r_{Q_x}$, we have $x\in S\subset B_S$ and
$3S\supset\lambda B_S\cap X$ by \rf{eqTrivialInclusionDQP}, so $3S\supset K$
as soon as $\rho_0^{k}r_{Q_x}\geq\frac{\diam K}{\lambda-1}$. Combining the
two, a single $k_0$ with
\begin{equation}\label{eqKZero}
	k_d+\log_{\rho_0}\frac{\diam K}{\dist(K,F)}\leq k_0
	\leq k_d+1+\log_{\rho_0}\frac{\diam K}{\dist(K,F)}
\end{equation}
serves every $x\in K$. Since $\mu$ is doubling, Lemma
\ref{lemDoublingNeighbors} applies with $Q_0$ arbitrarily large, so its items
hold for every cube; iterating DN\ref{itemDN2} along the chain
$Q_x\subset\widehat{Q_x}\subset\cdots\subset\mathcal P^{k_0}(Q_x)$, and then
DN\ref{itemDN1} once with $T=S$ to pass from $\mathcal P^{k_0}(Q_x)$ to its
triple,
$$\phi(x)=\mu(Q_x)\geq C_\mu^{-k_0}\mu\left(\mathcal P^{k_0}(Q_x)\right)
\geq C_\mu^{-k_0-1}\mu\left(3\mathcal P^{k_0}(Q_x)\right)
\geq C_\mu^{-k_0-1}\mu(K) ,$$
and \rf{eqKZero} turns $C_\mu^{-k_0-1}$ into the stated quantity.
\end{proof}

\subsection{Proof of Theorem \ref{theoAintro}}

By Lemma \ref{lemWeakBLDyadic} we may work with the dyadic condition
\rf{eqWeakBLDyadic}, and by Lemma \ref{lemBLAC} the density
$w=\frac{d\nu}{d\mu}$ exists. What we prove is the following quantitative
statement, of which Theorem \ref{theoAintro} is the case of small $\ve_0$.

\begin{proposition}\label{propoBL}
Let $\mu$ be a doubling Radon measure in $\R^d$, let $V\subset X$ be a bounded
relatively open set, let $K\subset V$ be compact with $\mu(K)>0$, and let
$\nu\in wBL_{\ve_0}(\mu,\QQ_V)$ with constants $\delta_0$, $C_0$ and
\begin{equation}\label{eqEpsThreshold}
	\ve_0\leq\min\left\{\ve_1,\ \left(2+2C_\mu^4\right)^{-1}\right\},
\end{equation}
with $\ve_1$ as in Lemma \ref{lemBLAC}, and write $w=\frac{d\nu}{d\mu}$, which
exists by that lemma. Then there is $q_0>1$, depending on $d$, on $C_\mu$ and
on $\delta_0$, $C_0$, such that for every $1<q\leq q_0$
$$\left(\avint_{K}w^q\, d\mu\right)^{\frac1q}
\leq C\left(\frac{\diam K}{\dist(K,X\setminus V)}\right)
^{\frac{\log C_\mu}{\log\rho_0}}
\left(\frac{\mu(V)}{\mu(K)}\right)^{1+\frac1q}\frac{\nu(V)}{\mu(V)} ,$$
with $C$ depending on $d$, $C_\mu$, $\delta_0$, $C_0$ and $q$.
\end{proposition}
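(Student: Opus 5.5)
The plan is to run the Bennewitz--Lewis argument on $f\coloneqq\phi w$ over $V$, with $\phi$ as in \rf{eqDefPhi}. The aim is to verify the $p=1$ hypothesis of Lemma \ref{lemDintro} (the case $\sigma=0$):
$$\int_{A^j}f\,d\mu\leq C_1\alpha_{j+1}\mu(A^j),\qquad A^j=\{x\in V:\ f(x)>\alpha_j\}.$$
Once this holds, Lemma \ref{lemDintro} gives $\int_V f^q\,d\mu\leq C\alpha_0^{q-1}\int_V f\,d\mu$. Since $\int_V f\,d\mu\leq\mu(V)\nu(V)$ and $\inf_K\phi$ is bounded below by Lemma \ref{lemInfPhi}, the stated bound follows by dividing by $(\inf_K\phi)^q\mu(K)$. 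The starting level is $\alpha_0\approx\nu(V)$, that is, of order $\mu(V)\cdot\nu(V)/\mu(V)$. Keeping track of the powers of $\mu(V)/\mu(K)$ and of the geometric factor coming from Lemma \ref{lemInfPhi} then produces the exponents in the statement.

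Step one sets up the stopping cubes. Work on each Whitney cube $Q\in\W$ separately. On $Q$ the function $\phi$ equals the constant $\mu(Q)$, so $f=\mu(Q)w$ there. Apply Lemma \ref{lemCZIteration} to $f$ on $Q$ with $\tau$ of the order of $\delta_0$ (divided by a doubling power) and with ratio $\rho=C_\mu\tau^{-1}$. The requirement $\alpha_0\geq C_\mu^{-1}\avint_Q f\,d\mu$ holds because $\avint_Q f\,d\mu=\nu(Q)\leq\nu(V)$; this is exactly why $\phi$ is used, since it normalises every Whitney cube to the same starting level.

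The core estimate then goes as follows. Fix a stopping cube $P\in\mathcal F^j$ and let $E=3P\cap U^{j+1}$, the union over the neighbours $R\in\NN(P)$. Cubes of $3P$ may lie in adjacent Whitney cubes. By Lemma \ref{lemNeighborWhitney} their $\phi$ values are comparable to $\mu(Q)$ up to $C_\mu^{\pm2}$, so the stopping condition read on each neighbour separately, as in Remark \ref{remAdjacentSystems}, gives
$$\mu(3P\cap U^{j+1})<\delta_0\,\mu(3P).$$
This uses CZ\ref{itemCZLayersByCubes} together with the comparability of $\mu(R)$ and $\mu(P)$, and it requires that $3P\subset V$, which Wh\ref{itemWhRadius} provides (so that $P\in\QQ_V$). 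Condition \rf{eqWeakBLDyadic} then yields
$$\nu(P)\leq\ve_0\,\nu(3P\cap U^{j+1})+C_0\,\nu(3P\setminus U^{j+1}).$$
Off $U^{j+1}$ we have $w\lesssim C_\mu^2\alpha_{j+1}/\mu(Q)$, so the second term is at most $C\alpha_{j+1}\mu(3P)/\mu(Q)$. After multiplying by $\mu(Q)$ it is bounded by $C\alpha_{j+1}\mu(P)$.

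Summing over $P$ needs care, and I expect this to be the main obstacle. The triples $3P$ overlap, and they spill into neighbouring Whitney cubes, so the sum of $\ve_0\nu(3P\cap U^{j+1})$ must be absorbed into $\int_{A^j}f\,d\mu$. I would apply Lemma \ref{lemBesicovitchSelection} to the family $\bigcup_Q\mathcal F^j(Q)$ to obtain a bounded-overlap subfamily, and use \rf{eqEnlargedBallDoubling} to compare masses. The threshold \rf{eqEpsThreshold}, which involves the factor $C_\mu^4$ coming from two applications of Lemma \ref{lemNeighborWhitney}, should make $\ve_0 C_\mu^4$ times the overlap small enough to absorb the term, giving $\int_{U^j}f\leq C_1\alpha_{j+1}\mu(U^j)$.

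Finally, pass from $U^j$ to $A^j$: by CZ\ref{itemCZG} and CZ\ref{itemCZZ}, $A^j\subset U^j$ up to null sets, and the part of $U^j$ on which $f\leq\alpha_j$ contributes trivially. It then remains to check $\rho>2C_1$, which holds by taking $\tau$ small relative to the constants $C_0$ and $C_\mu$; this choice also fixes the value of $q_0$.
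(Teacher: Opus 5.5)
\textbf{There is a genuine gap, and it sits exactly where the Bennewitz--Lewis condition has to do its work.}

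The hypothesis of Lemma \ref{lemLevelToQ} with $p=1$ is $\int_{A^{j}}\phi w\,d\mu\leq C_1\alpha_{j+1}\,\mu(A^{j})$. Its right-hand side involves the $\mu$-measure of the \emph{level set}.

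The inequality you aim for is $\int_{U^j}\phi w\,d\mu\leq C_1\alpha_{j+1}\mu(U^j)$. This is free: by CZ\ref{itemCZU} it already holds with $C_\mu\alpha_j=\tau\alpha_{j+1}$ on the right. Correspondingly, applying \rf{eqWeakBLDyadic} with $E=3P\cap U^{j+1}$ only gives an upper bound for $\nu(P)$, which the stopping rule already supplies. So the absorption you flag as the main obstacle is not needed, and it would not help.

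The argument breaks at your last step, the passage from $U^j$ to $A^j$. The inclusion $A^j\subset U^j$ gives $\int_{A^j}\phi w\leq\int_{U^j}\phi w$. But to replace $\mu(U^j)$ by the measure of a level set on the right you need the reverse comparison $\mu(U^j)\lesssim\mu(A^{j-1})$. The part of $U^j$ where $\phi w\leq\alpha_j$ contributes little to the integral, yet it is fully counted in $\mu(U^j)$.

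This comparison is the whole content of the proof, and the stopping-time structure alone cannot give it. Take $f=2\alpha_j/\eta$ on a set of measure $\eta\,\mu(R)$ inside a stopping cube $R$, and $f$ small elsewhere. This is compatible with CZ\ref{itemCZU}, yet it violates the level-set hypothesis by a factor of order $\eta^{-1}$. Ruling out such concentration is what the Bennewitz--Lewis condition is for.

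A secondary point: your claim $\mu(3P\cap U^{j+1})<\delta_0\mu(3P)$ is not justified for an arbitrary $P\in\mathcal F^j$. A neighbour $R\in\NN(P)$ may lie strictly inside a larger stopping cube $R'$, and CZ\ref{itemCZLayersByCubes} on $R'$ says nothing about $\mu(R\cap U^{j+1})/\mu(R)$.

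\textbf{The missing idea} is to apply \rf{eqWeakBLDyadic} with $E=3P\cap A^{j-1}$, the \emph{previous} level set, and to argue by contradiction.

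\emph{Step 1: restrict the family and absorb $\ve_0$ cube by cube.} Keep only the stopping cubes $P\in\mathcal F^j$ whose triples are maximal among $\{3R:R\in\mathcal F^j\}$. For these, N\ref{eqrem3QNeighbors5} shows that every $R\in\NN(P)\setminus\mathcal F^j$ has $\avint_R\phi w\,d\mu\leq\alpha_j$. Together with Lemma \ref{lemNeighborWhitney}, this makes $P$ $\nu$-doubling: $\nu(3P)\leq(1+C_\mu^4)\nu(P)$. This, not an overlap count, is the source of the $C_\mu^4$ in \rf{eqEpsThreshold}, and it lets the $\ve_0$-term be absorbed on each cube separately.

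\emph{Step 2: the contradiction.} Suppose $\mu(3P\cap A^{j-1})<\delta_0\mu(3P)$. Then $\nu(P)\leq 2C_0\,\nu(3P\setminus A^{j-1})\leq 2C_0C_\mu^{3}\rho^{-1}\alpha_j\,\mu(P)/\mu(Q)$. This contradicts the stopping condition $\nu(P)>\alpha_j\mu(P)/\mu(Q)$ once $\tau<(2C_0C_\mu^2)^{-1}$. Hence $\mu(3P\cap A^{j-1})\geq\delta_0\mu(3P)$ for these cubes.

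\emph{Step 3: compare $U^j$ with $A^{j-1}$.} The triples of these cubes still cover $U^j$. Lemma \ref{lemBesicovitchSelection} together with \rf{eqEnlargedBallDoubling} then gives $\mu(U^j)\leq C_{\mathrm B}\wt C_\mu\delta_0^{-1}\mu(A^{j-1})$. Combine this with $\int_{A^j}\phi w\leq C_\mu\alpha_j\mu(U^j)$ and $\int_{A^{j-1}\setminus A^j}\phi w\leq\alpha_j\mu(A^{j-1})$. The result is $\int_{A^{j-1}}\phi w\,d\mu\leq C_1\alpha_j\mu(A^{j-1})$, which is the hypothesis with index $j-1$.

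From there the rest of your plan goes through as you describe: the bootstrap lemma, the bound $\phi\leq\mu(V)$, and Lemma \ref{lemInfPhi}.
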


\begin{proof}
Let $\W$ and $\phi$ be as in \rf{eqDefPhi}. Since $\phi\equiv\mu(Q)$ on each
$Q\in\W$, we have
$\avint_Q\phi w\, d\mu=\nu(Q)\leq\nu(V)$, so we may run the
Calder\'on--Zygmund iteration of Lemma \ref{lemCZIteration} inside each
$Q\in\W$ with
$$f=\phi w,\qquad \alpha_0\coloneqq \nu(V),\qquad
\rho\coloneqq C_\mu\tau^{-1},\qquad \alpha_j\coloneqq \rho^{j}\alpha_0,$$
and $0<\tau<1$ to be fixed. Writing $\mathcal F^j$ for the union over
$Q\in\W$ of the resulting stopping families, $U^j\coloneqq
\bigcup_{R\in\mathcal F^j}R$, and
$$A^j\coloneqq \{x\in V:\, \phi(x)w(x)>\alpha_j\},$$
properties CZ\ref{itemCZU}--CZ\ref{itemCZLayers} give, for every $j\geq0$,
\begin{equation}\label{eqCZforBL}
\begin{aligned}
	&\alpha_j<\avint_{R}\phi w\, d\mu=\frac{\mu(Q)\nu(R)}{\mu(R)}
	\leq C_\mu\alpha_j
	\qquad\text{for } R\in\mathcal F^j\text{ with } R\subset Q\in\W,\\
	&A^j\subset U^j\cup Z^j \qquad\text{with }\mu(Z^j)=0,
\end{aligned}
\end{equation}
together with CZ\ref{itemCZIhat} and $\mu(U^j)\to0$.

\emph{Step 1.} We claim that if
\begin{equation}\label{eqTauSmall}
	\tau<\frac1{2C_0C_\mu^{2}} ,
\end{equation}
then for every $j\geq1$ there is a subfamily $\mathcal G\subset\mathcal F^j$
with
\begin{equation}\label{eqBLFirstStep}
	\begin{cases}
		\mu(A^{j-1}\cap 3P)\geq\delta_0\,\mu(3P) &\text{for every }
		P\in\mathcal G,\\
		U^{j}\subset\bigcup_{P\in\mathcal G}10\lambda^2A_0B_P, &\\
		\sum_{P\in\mathcal G}\chi_{3P}\leq C_{\mathrm B} . &
	\end{cases}
\end{equation}
Consider first the subfamily of cubes whose triples are maximal,
$$\wt{\mathcal F}^j\coloneqq \left\{P\in\mathcal F^j:\
3R\not\supset3P\ \text{ for every }R\in\mathcal F^j\setminus\{P\}\right\} ,$$
which is well defined and non-empty whenever $\mathcal F^j$ is, because $V$ is
bounded; an induction on the generation of the cubes gives
\begin{equation}\label{eqBigcup3PUj}
	\bigcup_{P\in\wt{\mathcal F}^j}3P=\bigcup_{R\in\mathcal F^j}3R\supset U^j .
\end{equation}
We show that no $P\in\wt{\mathcal F}^j$ can satisfy
\begin{equation}\label{eqBLFirstStepCont}
	\mu(A^{j-1}\cap3P)<\delta_0\,\mu(3P) .
\end{equation}
Let $P\in\wt{\mathcal F}^j$, say $P\subset Q\in\W$; note that
$\avint_Q\phi w\, d\mu\leq\alpha_0\leq\alpha_j$ forces $P\neq Q$, so
$P\subsetneq Q$. Every $R\in\NN(P)\setminus\mathcal F^j$ satisfies
\begin{equation}\label{eqNeighborBelowLevel}
	\avint_{R}\phi w\, d\mu\leq\alpha_j .
\end{equation}
Indeed, otherwise $R$ would be contained in a maximal cube
$R'\in\mathcal F^j$ with the same property; as $R\notin\mathcal F^j$ the
inclusion is strict, so $r_{R'}\geq\rho_0 r_R=\rho_0 r_P$, and
N\ref{eqrem3QNeighbors5} would give $3P\subset3R'$ with
$R'\in\mathcal F^j\setminus\{P\}$, contrary to the definition of
$\wt{\mathcal F}^j$.

Let $R\in\NN(P)$ and let $\wt Q\in\W$ be the Whitney cube containing it. By
\rf{eqNeighborBelowLevel} and \rf{eqCZforBL}, in either case
$\frac{\mu(\wt Q)\nu(R)}{\mu(R)}\leq C_\mu\alpha_j$, while \rf{eqCZforBL} at
$P$ gives $\alpha_j<\frac{\mu(Q)\nu(P)}{\mu(P)}$; and Lemma
\ref{lemNeighborWhitney} applied to $P\subsetneq Q$ yields
\begin{equation}\label{eqQwtQSimilar}
	\mu(\wt Q)\geq C_\mu^{-2}\mu(Q) .
\end{equation}
Combining the three and summing over $R\in\NN(P)$, we find that $P$ is
$\nu$-doubling:
\begin{equation}\label{eqPisNuDoubling}
\begin{aligned}
	\nu(3P)&\leq\nu(P)+C_\mu^{3}\frac{\nu(P)}{\mu(P)}
	\sum_{R\in\NN(P)}\mu(R)
	=\nu(P)\left(1+C_\mu^{3}\frac{\mu(3P)}{\mu(P)}\right)\\
	&\leq\left(1+C_\mu^{4}\right)\nu(P).
\end{aligned}
\end{equation}
On the other hand, \rf{eqBLFirstStepCont} and \rf{eqWeakBLDyadic} with
$E=A^{j-1}\cap3P$ give
$\nu(P)\leq\ve_0\nu(3P\cap A^{j-1})+C_0\nu(3P\setminus A^{j-1})$, so
\rf{eqPisNuDoubling} and $\ve_0(1+C_\mu^4)\leq\frac12$ yield
\begin{equation}\label{eqPby3PminusA}
	\nu(P)\leq2C_0\,\nu(3P\setminus A^{j-1}) .
\end{equation}
Finally, \rf{eqQwtQSimilar} gives $\mu(Q)\leq C_\mu^{2}\phi(x)$ for
$x\in3P$, so
$$\frac{\alpha_j\mu(P)}{\mu(Q)}\by{\rf{eqCZforBL}}<\nu(P)
\by{\rf{eqPby3PminusA},\ \rf{eqQwtQSimilar}}\leq
\frac{2C_0C_\mu^{2}}{\mu(Q)}\int_{3P\setminus A^{j-1}}\phi w\, d\mu
\leq\frac{2C_0C_\mu^{3}}{\rho}\,\frac{\alpha_j\mu(P)}{\mu(Q)} ,$$
whence $\tau=C_\mu\rho^{-1}\geq(2C_0C_\mu^{2})^{-1}$, contradicting
\rf{eqTauSmall}. Thus every $P\in\wt{\mathcal F}^j$ satisfies the first line of
\rf{eqBLFirstStep}, and so does every subfamily of $\wt{\mathcal F}^j$. Now
apply Lemma \ref{lemBesicovitchSelection} to $\wt{\mathcal F}^j$, whose cubes
are pairwise disjoint and contained in the bounded set $V$: the resulting
$\mathcal G$ satisfies the third line of \rf{eqBLFirstStep}, and the second
follows from \rf{eqBigcup3PUj}.

\emph{Step 2.} We claim that, with
$C_1\coloneqq 1+C_{\mathrm B}C_\mu\wt C_\mu\delta_0^{-1}$,
\begin{equation}\label{eqBLSecondStep}
	\int_{A^{j-1}}\phi w\, d\mu\leq C_1\,\alpha_j\,\mu(A^{j-1})
	\qquad\text{for every }j\geq1 ,
\end{equation}
with $\wt C_\mu$ as in \rf{eqEnlargedBallDoubling}. Indeed, since
$\mathcal F^j$ is a disjoint family, \rf{eqCZforBL} gives
$$\int_{A^{j}}\phi w\, d\mu\leq\int_{U^{j}}\phi w\, d\mu
=\sum_{P\in\mathcal F^j}\int_{P}\phi w\, d\mu
\leq C_\mu\alpha_j\sum_{P\in\mathcal F^j}\mu(P)=C_\mu\alpha_j\,\mu(U^j),$$
and the three lines of \rf{eqBLFirstStep}, in this order, give
$$\begin{aligned}
\mu(U^j)&\leq\sum_{P\in\mathcal G}\mu\left(10\lambda^2A_0B_P\right)
\leq\wt C_\mu\sum_{P\in\mathcal G}\mu(3P)\\
&\leq\frac{\wt C_\mu}{\delta_0}\sum_{P\in\mathcal G}\mu(3P\cap A^{j-1})
\leq\frac{C_{\mathrm B}\wt C_\mu}{\delta_0}\,\mu(A^{j-1}) .
\end{aligned}$$
Adding
$\int_{A^{j-1}\setminus A^{j}}\phi w\, d\mu\leq\alpha_j\mu(A^{j-1})$ gives
\rf{eqBLSecondStep}.

\emph{Step 3.} Fix now
\begin{equation}\label{eqTauChoice}
	\tau\coloneqq \min\left\{\frac{C_\mu}{4C_1},\ \frac1{4C_0C_\mu^{2}}\right\},
\end{equation}
so that \rf{eqTauSmall} holds and $\rho=C_\mu\tau^{-1}\geq4C_1>2C_1$. Then
\rf{eqBLSecondStep} is exactly hypothesis \rf{eqBasicLevels} of Lemma
\ref{lemLevelToQ} for the measure $\mu|_V$, the function $f=\phi w$ and
$p=1$, so for every $1<q\leq q_0$, with $q_0$ as in \rf{eqQZero},
\begin{equation}\label{eqBLThirdStep}
	\int_{V}(\phi w)^q\, d\mu
	\leq C_{C_1,\rho,1,q}\,\nu(V)^{q-1}\int_{V}\phi\, d\nu
	\leq C_{C_1,\rho,1,q}\,\nu(V)^{q}\,\mu(V),
\end{equation}
where we used $\phi\leq\mu(V)$. Restricting to $K$ and using $w\leq\phi
w/\inf_K\phi$ there,
$$\left(\avint_{K}w^q\, d\mu\right)^{\frac1q}
\leq\frac1{\mu(K)^{\frac1q}\inf_{K}\phi}
\left(\int_{V}(\phi w)^q\, d\mu\right)^{\frac1q}
\by{\rf{eqBLThirdStep}}\leq
\frac{C_{C_1,\rho,1,q}^{\frac1q}\,\mu(V)^{\frac1q}\,\nu(V)}
{\mu(K)^{\frac1q}\,\inf_{K}\phi} .$$

Inserting the lower bound for $\inf_K\phi$ given by Lemma \ref{lemInfPhi}
proves the proposition.
\end{proof}

Theorem \ref{theoAintro} is Proposition \ref{propoBL} combined with Lemma
\ref{lemWeakBLDyadic}, so that its threshold is
$$\ve_*=C_\DD^{-1}\min\left\{\ve_1,\ \left(2+2C_\mu^4\right)^{-1}\right\},$$
the factor $C_\DD$ being the price of passing from balls to the basis
$\DD\cup3\DD$. Letting $K$ exhaust a ball and taking $V=2\Delta$ gives
$\nu\in wRH_q(\mu,V)$ for $1<q\leq q_0$, that is
$wRH_1^{BL}\subset wRH_1^{U}$, and with Lemmas \ref{lemWeakTrivial1} and
\ref{lemWeakTrivial2} this proves Corollary \ref{coroBintro}.

\section{The bootstrapping lemma}
\label{secBootstrap}

The proofs of Theorems \ref{theoAintro} and \ref{theoCintro} both end by
feeding an estimate for the level sets of a function into the lemma of this
section. The bootstrapping argument is Gehring's \cite{Gehring}; the case
$p=1$ is the form in which Bennewitz and Lewis use it \cite{BennewitzLewis}.
We state it once, for all $p\geq1$, because the two cases differ only in the
exponents that appear, and we keep track of the constants, since the
admissible range of exponents in Theorems \ref{theoAintro} and
\ref{theoCintro} is exactly the range produced here.

Throughout this section
\begin{equation}\label{eqDefSigma}
	\sigma\coloneqq \begin{cases}0 &\text{if }p=1,\\ 1&\text{if }p>1,\end{cases}
\end{equation}
and $f^0\equiv1$, so that $\int_E f^\sigma\, d\mu$ means $\mu(E)$ when $p=1$
and $\int_E f\, d\mu$ when $p>1$.

\begin{lemma}\label{lemLevelToQ}
Let $\mu$ be a measure on a measurable space, let $V$ be a measurable set, let
$p\geq1$ and let $f\colon V\to[0,\infty]$ be measurable with
$f\in L^p(\mu|_V)$. Let $\alpha_0>0$ and $\rho>1$, put
$\alpha_j\coloneqq \rho^j\alpha_0$ and
$$A^j\coloneqq \{x\in V:\, f(x)>\alpha_j\},\qquad j\geq0 .$$
Assume that, with $\sigma$ as in \rf{eqDefSigma},
\begin{equation}\label{eqBasicLevels}
	\int_{A^{j}}f^p\, d\mu\leq C_1\,\alpha_{j+1}^{\,p-\sigma}
	\int_{A^{j}}f^{\,\sigma}\, d\mu\qquad\text{for every }j\geq0,
\end{equation}
and that
\begin{equation}\label{eqRhoLarge}
	\rho^{\,p-\sigma}>2C_1>2 .
\end{equation}
Then, for every $q>p$ for which
\begin{equation}\label{eqConstantLevelToQ}
	C_{C_1,\rho,p,q}\coloneqq
	\frac{\rho^{q-p}\left(\rho^{\,p-\sigma}-1\right)}
	{\rho^{\,q-\sigma}-1-2C_1\rho^{\,q-\sigma}
		\left(\rho^{\,q-\sigma}-\rho^{\,p-\sigma}\right)}
\end{equation}
is positive,
\begin{equation}\label{eqConclusionLevelToQ}
	\int_{V}f^q\, d\mu\leq C_{C_1,\rho,p,q}\,
	\alpha_0^{q-p}\int_{V}f^p\, d\mu .
\end{equation}
\end{lemma}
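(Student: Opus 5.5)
The plan is Gehring's layer-cake argument, run on the discrete levels $\alpha_j$ and truncated at a finite height so that the absorption step is legitimate. Write $L_j\coloneqq A^j\setminus A^{j+1}$ and
$$I_j\coloneqq \int_{A^j}f^p\,d\mu,$$
which is finite because $f\in L^p(\mu|_V)$. Since $f\leq\alpha_0$ on $V\setminus A^0$,
$$\int_{V\setminus A^0}f^q\,d\mu\leq\alpha_0^{q-p}\int_V f^p\,d\mu,$$
so everything reduces to the part of the integral on $A^0$. On $A^0$ the sets $L_j$, $j\geq0$, cover $A^0$ up to the set $\{f=\infty\}$, which is $\mu$-null since $f\in L^p$.

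\emph{Truncation and summation by parts.} Fix $N$ and set $J_N\coloneqq\int_{A^0\setminus A^N}f^q\,d\mu$. This is finite, because $f\leq\alpha_N$ there, so $J_N\leq\alpha_N^{q-p}I_0$. On $L_j$ we have $f^{q-p}\leq\alpha_{j+1}^{q-p}$, hence
$$J_N\leq\sum_{j<N}\alpha_{j+1}^{q-p}(I_j-I_{j+1}).$$
Abel summation turns this into
$$\alpha_1^{q-p}I_0+\sum_{1\leq j<N}\left(\alpha_{j+1}^{q-p}-\alpha_j^{q-p}\right)I_j-\alpha_N^{q-p}I_N .$$
The last term is nonpositive and I drop it. The first term is $\rho^{q-p}\alpha_0^{q-p}I_0$, which is of the shape of the conclusion \rf{eqConclusionLevelToQ}. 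In the middle sum, $\alpha_{j+1}^{q-p}-\alpha_j^{q-p}=(\rho^{q-p}-1)\alpha_j^{q-p}$, and this factor is small as $q\downarrow p$; that smallness is what will drive the absorption.

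\emph{Using the hypothesis.} Apply \rf{eqBasicLevels} to each $I_j$:
$$I_j\leq C_1\alpha_{j+1}^{p-\sigma}\int_{A^j}f^\sigma\,d\mu .$$
Then split $\int_{A^j}f^\sigma\,d\mu=\sum_{k\geq j}\int_{L_k}f^\sigma\,d\mu$ and use $f>\alpha_k$ on $L_k$ to write $f^\sigma\leq\alpha_k^{\sigma-q}f^q$ there. For $k<N$ this produces $\int_{L_k}f^q\,d\mu$, which is part of $J_N$. The tail $k\geq N$ is where care is needed: I would bound it by $\int_{A^N}f^\sigma\,d\mu\leq\alpha_N^{\sigma-p}I_N$ (using $f>\alpha_N$ on $A^N$) and show that it vanishes as $N\to\infty$. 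Here the geometric weights $\alpha_j^{q-p}\alpha_{j+1}^{p-\sigma}\alpha_N^{\sigma-p}$ combine into $\alpha_N^{q-p}$ up to a factor $\rho^{p-\sigma}$, and $\alpha_N^{q-p}I_N\leq\int_{A^N}f^q\,d\mu$. Since that is not known to be finite a priori, this is the delicate spot. My fallback is to argue first with $f$ replaced by $\min(f,\alpha_N)$ on $A^N$: the hypothesis \rf{eqBasicLevels} for $j<N$ survives up to the boundary terms, and the truncated function is in $L^q$.

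\emph{The geometric double sum and absorption.} Interchanging the sums over $j\leq k$, the coefficient of $\int_{L_k}f^q\,d\mu$ is
$$C_1(\rho^{q-p}-1)\rho^{p-\sigma}\sum_{j\leq k}\alpha_j^{q-p}\alpha_j^{p-\sigma}\alpha_k^{\sigma-q} .$$
This is a geometric series in $j$ with ratio $\rho^{-(q-\sigma)}$, bounded by $C_1(\rho^{q-p}-1)\rho^{p-\sigma}\rho^{q-\sigma}/(\rho^{q-\sigma}-1)$, uniformly in $k$. This yields
$$J_N\leq\rho^{q-p}\alpha_0^{q-p}I_0+\theta\,J_N+o(1),$$
with $\theta<1$ exactly when the denominator in \rf{eqConstantLevelToQ} is positive. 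The condition \rf{eqRhoLarge} guarantees $\theta<1$ for $q$ close to $p$. I would then absorb $\theta J_N$, let $N\to\infty$ by monotone convergence, and add back the piece on $V\setminus A^0$; after tidying constants this gives \rf{eqConclusionLevelToQ} with $C_{C_1,\rho,p,q}$. Matching the exact expression \rf{eqConstantLevelToQ}, including its factor $2C_1$ and the factor $\rho^{p-\sigma}-1$ in the numerator, is bookkeeping that I would adjust at the end; for instance, part of the factor $2$ may come from splitting $I_j$ into its layer $L_j$ and its tail. The genuine obstacle is the truncation and tail step above, that is, justifying the absorption when $f$ is only known to lie in $L^p$.
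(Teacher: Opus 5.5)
Your skeleton is the right one, and the double-sum computation is correct. With $u\coloneqq\rho^{q-\sigma}$ and $a\coloneqq\rho^{p-\sigma}$, the coefficient in front of $J_N$ is $\theta=C_1u(u-a)/(u-1)$. But the proof is not complete at exactly the point you flag. After you discard $-\alpha_N^{q-p}I_N$, the tail bound $\int_{A^N}f^\sigma\,d\mu\leq\alpha_N^{\sigma-p}I_N$ leaves you with $J_N\leq\alpha_1^{q-p}I_0+\theta J_N+\frac{C_1(u-a)}{u-1}\,\alpha_N^{q-p}I_N$. Nothing you have makes the last term $o(1)$: $\alpha_N^{q-p}I_N$ is controlled only by $\int_{A^N}f^q\,d\mu$, and its finiteness is essentially the conclusion. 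The fallback does not close this as written either. For $p=1$ the hypothesis passes to $\min(f,\alpha_N)$ trivially. For $p>1$, however, the ``boundary terms'' $\int_{A^N}(f-\alpha_N)\,d\mu$ hidden in $\int_{A^j}f\,d\mu$ are the whole difficulty, and you give no argument that they can be controlled.

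The repair is already in your Abel summation: keep the term $-\alpha_N^{q-p}I_N$. Since $\frac{C_1(u-a)}{u-1}=\theta/u<1$ whenever $\theta<1$, the net tail $(\theta/u-1)\alpha_N^{q-p}I_N$ is nonpositive. Every quantity is finite ($I_N<\infty$ and $J_N\leq\alpha_N^{q-p}I_0$), so you may absorb $\theta J_N$ and let $N\to\infty$. This yields the constant $S\coloneqq\rho^{q-p}(u-1)/\bigl(u-1-C_1u(u-a)\bigr)\geq1$.

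Your assertion that $\theta<1$ ``exactly when'' the denominator of \rf{eqConstantLevelToQ} is positive is off by the factor $2$, but harmlessly. Positivity of that denominator implies $\theta<1$, and $S\leq C_{C_1,\rho,p,q}$ because $(u-1)\bigl(u-1-2C_1u(u-a)\bigr)-(a-1)\bigl(u-1-C_1u(u-a)\bigr)=(u-a)\bigl((u-1)-C_1u(2u-a-1)\bigr)\leq0$. Here you use $u\geq a$, $C_1u>1$ and $2u-a-1\geq u-1$. So, repaired, your route proves the lemma and does not even use \rf{eqRhoLarge}.

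The paper treats the tail differently. Its Step 1 subtracts $\int_{A^N}f^\sigma\,d\mu\leq\rho^{-(p-\sigma)(N-j-1)}\alpha_{j+1}^{\sigma-p}\int_{A^j}f^p\,d\mu$ from \rf{eqBasicLevels} to obtain the truncated inequality \rf{eqEverythingBegins}, and then works only on $A^0\setminus A^N$. That subtraction is where \rf{eqRhoLarge} and the factor $2C_1$ come from. Its numerator $\rho^{p-\sigma}-1$ comes from a second summation by parts combined with the case $j=0$ of \rf{eqEverythingBegins}. Be aware that feeding that truncated inequality into your single absorption would give $\rho^{q-p}(u-1)/\bigl(u-1-2C_1u(u-a)\bigr)$. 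This exceeds the stated constant by the factor $(u-1)/(a-1)$, so on that route matching \rf{eqConstantLevelToQ} is not mere bookkeeping.
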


Here $p-\sigma$ equals $1$ if $p=1$ and $p-1$ if $p>1$; in particular it is
positive, so \rf{eqRhoLarge} is a condition on how fast the levels $\alpha_j$
are made to grow. Written out, \rf{eqBasicLevels} is
$\int_{A^{j}}f\, d\mu\leq C_1\alpha_{j+1}\mu(A^{j})$ when $p=1$, and
$\int_{A^{j}}f^p\, d\mu\leq C_1\alpha_{j+1}^{p-1}\int_{A^{j}}f\, d\mu$ when
$p>1$; correspondingly, \rf{eqConstantLevelToQ} is
$$\begin{aligned}
C_{C_1,\rho,1,q}&=\frac{\rho^{q-1}\left(\rho-1\right)}
{\rho^{q}-1-2C_1\rho^{q}\left(\rho^{q}-\rho\right)},\\
C_{C_1,\rho,p,q}&=\frac{\rho^{q-p}\left(\rho^{p-1}-1\right)}
{\rho^{q-1}-1-2C_1\rho^{q+p-2}\left(\rho^{q-p}-1\right)}
\qquad (p>1).
\end{aligned}$$

The denominator of \rf{eqConstantLevelToQ} equals $\rho^{p-\sigma}-1>0$ at
$q=p$ and is continuous in $q$, so the admissible exponents form a
non-degenerate interval. We record a concrete one.

\begin{remark}\label{remRangeOfQ}
Under \rf{eqRhoLarge}, the quantity \rf{eqConstantLevelToQ} is positive for
every $p<q\leq q_0$, where
\begin{equation}\label{eqQZero}
	q_0\coloneqq p+\frac1{\log\rho}
	\log\left(1+\frac{\rho^{\,p-\sigma}-1}{4C_1\rho^{\,2(p-\sigma)}}\right) .
\end{equation}
Indeed, write $\eta$ for the fraction inside the logarithm. By
\rf{eqRhoLarge} we have $\eta<\left(4C_1\rho^{\,p-\sigma}\right)^{-1}<1$, so
$q\leq q_0$ gives $\rho^{q-p}\leq1+\eta<2$ and hence
$\rho^{\,q-\sigma}=\rho^{q-p}\rho^{\,p-\sigma}<2\rho^{\,p-\sigma}$, whence
$$\begin{aligned}
2C_1\rho^{\,q-\sigma}\left(\rho^{\,q-\sigma}-\rho^{\,p-\sigma}\right)
&=2C_1\rho^{\,q-\sigma}\rho^{\,p-\sigma}\left(\rho^{q-p}-1\right)\\
&\leq4C_1\rho^{\,2(p-\sigma)}\eta=\rho^{\,p-\sigma}-1
<\rho^{\,q-\sigma}-1 .
\end{aligned}$$
Moreover, whenever \rf{eqConstantLevelToQ} is positive it is at least $1$:
writing $u\coloneqq \rho^{\,q-\sigma}$ and $a\coloneqq \rho^{\,p-\sigma}$, so
that $\rho^{q-p}=u/a$, the numerator minus the denominator of
\rf{eqConstantLevelToQ} equals
\begin{equation}\label{eqCAtLeastOne}
	\frac ua\left(a-1\right)-\left(u-1-2C_1u(u-a)\right)
	=\left(u-a\right)\left(2C_1u-\frac1a\right)\geq0 ,
\end{equation}
since $u\geq a>1$ and $2C_1>2$.
\end{remark}

\begin{proof}[Proof of Lemma \ref{lemLevelToQ}]
For $t\geq0$ and $N\geq1$ write
$$I_N(t)\coloneqq \int_{A^0\setminus A^N}f^t\, d\mu ,$$
so that $I_N(0)=\mu(A^0\setminus A^N)$. All these quantities are finite: on
$A^0\setminus A^N$ we have $\alpha_0<f\leq\alpha_N$, so
$f^t\leq\max\{\alpha_0^{\,t-p},\alpha_N^{\,t-p}\}f^p$ there and
$I_N(t)\leq\max\{\alpha_0^{\,t-p},\alpha_N^{\,t-p}\}\|f\|_{L^p(\mu|_V)}^p$ for
every $t\geq0$. Arguing in the same way on $A^N$, where $f>\alpha_N$, and on
$A^{j}$, where $f>\alpha_j$ and $\sigma\leq p$,
\begin{equation}\label{eqMuANtoZero}
	\mu(A^N)\leq\alpha_N^{-p}\|f\|_{L^p(\mu|_V)}^p\xrightarrow{N\to\infty}0,
	\qquad
	\int_{A^{j}}f^{\,\sigma}\, d\mu\leq\alpha_j^{\,\sigma-p}
	\|f\|_{L^p(\mu|_V)}^p<\infty .
\end{equation}

\emph{Step 1: a truncated form of the hypothesis.} We claim that
\begin{equation}\label{eqEverythingBegins}
	\int_{A^{j}\setminus A^N}f^p\, d\mu\leq2C_1\,\alpha_{j+1}^{\,p-\sigma}
	\int_{A^{j}\setminus A^N}f^{\,\sigma}\, d\mu
	\qquad\text{for every }0\leq j\leq N-1 .
\end{equation}
If $N=j+1$ this is immediate, and with $1$ in place of $2C_1$: on
$A^{j}\setminus A^{j+1}$ we have $f\leq\alpha_{j+1}$, so
$f^p\leq\alpha_{j+1}^{\,p-\sigma}f^{\,\sigma}$ there. Assume then
$N\geq j+2$. Since $f>\alpha_N$ on $A^N$ and $A^N\subset A^{j}$,
$$\int_{A^N}f^{\,\sigma}\, d\mu\leq\frac1{\alpha_N^{\,p-\sigma}}
\int_{A^N}f^{p}\, d\mu
\leq\frac1{\rho^{(p-\sigma)(N-j-1)}\alpha_{j+1}^{\,p-\sigma}}
\int_{A^{j}}f^{p}\, d\mu ,$$
because $\alpha_N=\rho^{N-j-1}\alpha_{j+1}$. Subtracting this from
\rf{eqBasicLevels}, which is legitimate because
$\int_{A^j}f^\sigma\, d\mu<\infty$, we get
$$\int_{A^{j}\setminus A^N}f^{\,\sigma}\, d\mu
\by{\rf{eqBasicLevels}}\geq\left(\frac1{C_1}
-\frac1{\rho^{(p-\sigma)(N-j-1)}}\right)
\frac1{\alpha_{j+1}^{\,p-\sigma}}\int_{A^{j}}f^{p}\, d\mu .$$
By \rf{eqRhoLarge} and $N-j-1\geq1$ we have
$\rho^{(p-\sigma)(N-j-1)}\geq\rho^{\,p-\sigma}>2C_1$, so the bracket is at
least $(2C_1)^{-1}$, and \rf{eqEverythingBegins} follows because
$A^{j}\setminus A^N\subset A^{j}$.

\emph{Step 2: summation by parts.} For every $t\geq0$ and every measurable
$g\geq0$ with $\int_{A^0\setminus A^N}g\, d\mu<\infty$,
\begin{equation}\label{eqSummationByParts}
	\sum_{j=0}^{N-1}\alpha_{j+1}^{t}\int_{A^{j}\setminus A^{j+1}}g\, d\mu
	=\alpha_1^{t}\int_{A^0\setminus A^N}g\, d\mu
	+\left(\rho^{t}-1\right)\sum_{k=1}^{N-1}\alpha_{k}^{t}
	\int_{A^{k}\setminus A^N}g\, d\mu .
\end{equation}
Indeed, writing $a_j\coloneqq \int_{A^{j}\setminus A^{j+1}}g\, d\mu$ and
exchanging the order of summation, the right-hand side equals
$\sum_{j=0}^{N-1}a_j\bigl(\alpha_1^t+(\rho^t-1)\sum_{k=1}^{j}\alpha_k^t\bigr)$,
and the geometric sum telescopes to
$\alpha_1^t+\alpha_0^t\rho^{t}\left(\rho^{jt}-1\right)=\alpha_{j+1}^{t}$.

\emph{Step 3: the bootstrap.} Let $q>p$. Since $f\leq\alpha_{j+1}$ on
$A^{j}\setminus A^{j+1}$,
$$\begin{aligned}
I_N(q)&\leq\sum_{j=0}^{N-1}\alpha_{j+1}^{\,q-p}
\int_{A^{j}\setminus A^{j+1}}f^{p}\, d\mu\\
&\by{\rf{eqSummationByParts}}=\alpha_1^{\,q-p}I_N(p)
+\left(\rho^{\,q-p}-1\right)\sum_{k=1}^{N-1}\alpha_{k}^{\,q-p}
\int_{A^{k}\setminus A^N}f^{p}\, d\mu .
\end{aligned}$$
By \rf{eqEverythingBegins} and $\alpha_{k+1}=\rho\alpha_k$, each term of the
last sum obeys
$$\alpha_{k}^{\,q-p}\int_{A^{k}\setminus A^N}f^{p}\, d\mu
\leq2C_1\rho^{\,p-\sigma}\,\alpha_k^{\,q-\sigma}
\int_{A^{k}\setminus A^N}f^{\,\sigma}\, d\mu .$$
Writing
$$\Theta\coloneqq \frac{2C_1\rho^{\,p-\sigma}\left(\rho^{\,q-p}-1\right)}
{\rho^{\,q-\sigma}-1}
=\frac{2C_1\left(\rho^{\,q-\sigma}-\rho^{\,p-\sigma}\right)}
{\rho^{\,q-\sigma}-1},$$
and applying \rf{eqSummationByParts} again, now with $t=q-\sigma$ and
$g=f^{\,\sigma}$, we obtain
$$I_N(q)\leq\alpha_1^{\,q-p}I_N(p)+\Theta\left(
\sum_{j=0}^{N-1}\alpha_{j+1}^{\,q-\sigma}
\int_{A^{j}\setminus A^{j+1}}f^{\,\sigma}\, d\mu
-\alpha_1^{\,q-\sigma}I_N(\sigma)\right).$$
Two bounds now close the argument. On $A^{j}$ we have
$\alpha_{j+1}=\rho\alpha_{j}<\rho f$, so
$\alpha_{j+1}^{\,q-\sigma}f^{\,\sigma}\leq\rho^{\,q-\sigma}f^{q}$ and the sum
inside the bracket is at most $\rho^{\,q-\sigma}I_N(q)$; and
\rf{eqEverythingBegins} with $j=0$ reads
$I_N(p)\leq2C_1\alpha_1^{\,p-\sigma}I_N(\sigma)$, that is,
$\alpha_1^{\,q-\sigma}I_N(\sigma)\geq(2C_1)^{-1}\alpha_1^{\,q-p}I_N(p)$.
Therefore
$$I_N(q)\left(1-\Theta\rho^{\,q-\sigma}\right)
\leq\alpha_1^{\,q-p}I_N(p)\left(1-\frac{\Theta}{2C_1}\right),$$
and since
$$1-\frac{\Theta}{2C_1}=\frac{\rho^{\,p-\sigma}-1}{\rho^{\,q-\sigma}-1},$$
the factor on the right is always positive, while the one on the left is
positive precisely when the denominator of \rf{eqConstantLevelToQ} is. In that
case, using $\alpha_1^{\,q-p}=\rho^{\,q-p}\alpha_0^{\,q-p}$,
\begin{equation}\label{eqINqbyINp}
	I_N(q)\leq C_{C_1,\rho,p,q}\,\alpha_0^{\,q-p}I_N(p) .
\end{equation}

\emph{Step 4: conclusion.} Since $f\leq\alpha_0$ on $V\setminus A^0$,
$$\int_{V}f^q\, d\mu\leq\alpha_0^{\,q-p}\int_{V\setminus A^0}f^{p}\, d\mu
+\int_{A^0}f^{q}\, d\mu .$$
The sets $A^0\setminus A^N$ increase to $A^0\setminus\bigcap_{N}A^N$, and
$\mu\bigl(\bigcap_{N}A^N\bigr)=0$ by \rf{eqMuANtoZero}, so
$I_N(q)\to\int_{A^0}f^q\, d\mu$ and $I_N(p)\to\int_{A^0}f^p\, d\mu$ by
monotone convergence. Letting $N\to\infty$ in \rf{eqINqbyINp} we get
$$\int_{V}f^q\, d\mu\leq\alpha_0^{\,q-p}\left(
\int_{V\setminus A^0}f^{p}\, d\mu
+C_{C_1,\rho,p,q}\int_{A^0}f^{p}\, d\mu\right),$$
and \rf{eqConclusionLevelToQ} follows because $C_{C_1,\rho,p,q}\geq1$ by
Remark \ref{remRangeOfQ}.
\end{proof}

We shall apply Lemma \ref{lemLevelToQ} twice: with $p=1$ in
\S\ref{secBL}, where \rf{eqBasicLevels} comes from the
Calder\'on--Zygmund iteration on $\phi w$ and the Bennewitz--Lewis condition,
and with $p>1$ in \S\ref{secGehring}, where it comes from the same iteration
and a weak reverse H\"older inequality.

\section{Gehring's lemma}
\label{secGehring}

We now prove Theorem \ref{theoCintro}: the weak reverse H\"older inequality
improves itself. The scheme is the one of \S\ref{secBL} with $p>1$ in place of
$p=1$ --- the same Whitney decomposition, the same auxiliary function
\rf{eqDefPhi}, the same selection lemma and the same bootstrap. What changes
is the stopping estimate: there, the Bennewitz--Lewis condition was used to
show that $3P$ carries a definite proportion of the previous level set; here
the reverse H\"older inequality is used to show that $3P$ carries a definite
amount of the integral over it. Accordingly we only write out the two steps
that differ.

\subsection{The dyadic form}

\begin{lemma}\label{lemGehringEnlarged}
Let $\mu$ be a doubling Radon measure in $\R^d$, let $V\subset X$ be a bounded
relatively open set, let $K\subset V$ be compact with
$\mu(K)>0$, let $1<p<\infty$ and let $\nu\in wRH_p(\mu,\QQ_V)$ with constant
$C_{RH}$, in the sense of Definition \ref{defWeakDyadic}. Then there is
$q_0>p$, depending on $d$, $C_\mu$, $p$ and $C_{RH}$, such that for every
$p<q\leq q_0$
$$\left(\avint_{K}w^{q}\, d\mu\right)^{\frac1q}
\leq C\left(\frac{\diam K}{\dist(K,X\setminus V)}\right)
^{\frac{\log C_\mu}{\log\rho_0}}
\left(\frac{\mu(V)}{\mu(K)}\right)^{1+\frac1q}
\left(\avint_{V}w^{p}\, d\mu\right)^{\frac1p} ,$$
with $C$ depending on $d$, $C_\mu$, $p$, $C_{RH}$ and $q$.
\end{lemma}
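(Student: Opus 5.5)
We follow the scheme of Proposition \ref{propoBL} with $p>1$. Let $\W$ be the Whitney decomposition of $V$ and $\phi$ as in \rf{eqDefPhi}, and set $f\coloneqq\phi w$. The normalisation is chosen so that the level $\alpha_0$ is comparable to $\mu(V)\bigl(\avint_V w^p\,d\mu\bigr)^{1/p}$. First, $\avint_Q\phi w\,d\mu=\nu(Q)\leq\mu(Q)^{1/p'}\bigl(\int_Q w^p\bigr)^{1/p}\leq\alpha_0$ for $Q\in\W$, so the stopping is admissible. Second, $\int_V f^p\,d\mu\leq\mu(V)^p\int_V w^p\,d\mu$. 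On each $Q\in\W$ we run Lemma \ref{lemCZIteration} with $f=\phi w$ and levels $\alpha_j=\rho^j\alpha_0$, where $\rho=C_\mu\tau^{-1}$. This gives families $\mathcal F^j$, sets $U^j$ and level sets $A^j=\{f>\alpha_j\}$ satisfying \rf{eqCZforBL}.

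The goal is hypothesis \rf{eqBasicLevels} with $\sigma=1$:
$$\int_{A^{j}}f^p\,d\mu\leq C_1\alpha_{j+1}^{p-1}\int_{A^{j}}f\,d\mu .$$
On $A^j\setminus A^{j+1}$ this is trivial with constant $1$, so it suffices to bound $\int_{A^{j+1}}f^p\leq\int_{U^{j+1}}f^p$. For $P\in\mathcal F^{j+1}$ with $P\subsetneq Q\in\W$ (the case $P=Q$ is excluded as before), the hypothesis $\nu\in wRH_p(\mu,\QQ_V)$ applies because $3P\subset3Q\subset V$ by Wh\ref{itemWhRadius}. Since $\phi\approx\mu(Q)$ on $3P$ by Lemma \ref{lemNeighborWhitney}, it gives
$$\int_P f^p\,d\mu\leq C\,\mu(P)^{1-p}\Bigl(\int_{3P}f\,d\mu\Bigr)^p .$$
We pass to the maximal triples $\wt{\mathcal F}^{j+1}$ and use Lemma \ref{lemBesicovitchSelection} to obtain a family $\mathcal G$ with bounded overlap. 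Then the total $\int_{U^{j+1}}f^p$ is controlled by
$$\sum_{P\in\mathcal G}\ \sum_{P'\subset 10\lambda^2A_0B_P}\mu(P')^{1-p}\Bigl(\int_{3P'}f\Bigr)^p .$$
The task is to reduce this to $\alpha_{j+1}^{p-1}\int_{3P}f$, up to a term absorbable on the $A^j$ side.

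The key step is the analogue of Step 1 of Proposition \ref{propoBL}: a stopping estimate showing that $\int_{3P}f\,d\mu\lesssim\alpha_{j+1}\mu(P)$ and that a definite part of it lives on $A^j$. For $P\in\wt{\mathcal F}^{j+1}$ the neighbours $R\in\NN(P)$ satisfy \rf{eqNeighborBelowLevel}, so we get the doubling-type bound \rf{eqPisNuDoubling}, which gives $\int_{3P}f\leq C\alpha_{j+1}\mu(P)$. Hence
$$\mu(P)^{1-p}\Bigl(\int_{3P}f\Bigr)^p\leq C\alpha_{j+1}^{p-1}\int_{3P}f .$$
Next we split $\int_{3P}f=\int_{3P\cap A^j}f+\int_{3P\setminus A^j}f$. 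The second piece is at most $\alpha_j\mu(3P)\leq C_\mu^2\tau\alpha_{j+1}\mu(P)/C_\mu$, which is small compared with $\int_P f>\alpha_{j+1}\mu(P)$ once $\tau$ is small (depending on $C_\mu$ and $C_{RH}$). It can therefore be absorbed, leaving $\int_{3P}f\leq2\int_{3P\cap A^j}f$. This replaces the Bennewitz--Lewis contradiction argument.

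Summing over $\mathcal G$ with overlap $C_{\mathrm B}$, and covering $U^{j+1}$ by the enlarged balls via \rf{eqEnlargedBallDoubling}, requires care. Integrals of $f^p$ over $U^{j+1}\cap10\lambda^2A_0B_P$ must be charged to $P$; we do this by applying the bound to each maximal cube directly rather than to the balls, since $U^{j+1}$ is covered by the triples of $\wt{\mathcal F}^{j+1}$ via \rf{eqBigcup3PUj}. For each $P$ we bound $\int_{3P}f^p$ by summing the $wRH$ estimate over $R\in\NN(P)$: those in $\mathcal F^{j+1}$ are handled as above, and those below level contribute at most $\alpha_{j+1}^{p-1}\int_R f$ after applying $wRH$ to $R$. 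This yields \rf{eqBasicLevels} with $C_1$ depending on $d$, $C_\mu$, $p$ and $C_{RH}$ (note $\gamma=3$ enters only through $C_{RH}$ and the doubling constants).

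Finally we fix $\tau$ so small that $\rho^{p-1}>2C_1$, and apply Lemma \ref{lemLevelToQ} with Remark \ref{remRangeOfQ}. This gives
$$\int_V f^q\,d\mu\leq C\alpha_0^{q-p}\mu(V)^p\int_V w^p\,d\mu .$$
We then restrict to $K$, divide by $(\inf_K\phi)^q$ using Lemma \ref{lemInfPhi}, and substitute $\alpha_0$. The powers of $\mu(V)$ and $\mu(K)$ combine to $(\mu(V)/\mu(K))^{1+1/q}$ after using $\mu(K)\leq\mu(V)$, which is the stated inequality. The main obstacle is the middle step, namely controlling $\int_{U^{j+1}}f^p$ through neighbours that are not themselves stopping cubes. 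There the $wRH$ inequality produces $p$-th powers of $\int_{3R}f$ that must be linearised using the level bound on $3R$, and this linearisation needs the $\nu$-doubling of the maximal cubes.
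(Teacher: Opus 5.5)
There is a genuine gap, and it sits at the step you yourself call the main obstacle.

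\textbf{Where the argument fails.} Because you stop on the $L^1$ averages of $f=\phi w$, the stopping data give no upper control of $\int_{U^{j+1}}f^p\,d\mu$. You try to recover it from the weak reverse H\"older inequality on the neighbours $R\in\NN(P)$ of $P\in\wt{\mathcal F}^{j+1}$. That inequality gives
$$\int_Rf^p\,d\mu\lesssim\mu(R)^{1-p}\Bigl(\int_{3R}f\,d\mu\Bigr)^p .$$
To linearise this you need $\int_{3R}f\,d\mu\lesssim\alpha_{j+1}\mu(R)$, that is, control of $f$ on the neighbours of the neighbours of $P$.

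Maximality of $3P$, through N\ref{eqrem3QNeighbors5}, only bounds the averages over the first neighbours; that is what \rf{eqPisNuDoubling} encodes. The bound $\nu(3P)\lesssim\nu(P)$ says nothing about $\nu(3R)$, since $3R$ reaches outside $3P$. This breaks two of your sentences:
\begin{itemize}
\item ``contribute at most $\alpha_{j+1}^{p-1}\int_Rf$ after applying $wRH$ to $R$'' silently replaces $\int_{3R}f$ by $\int_Rf$.
\item ``those in $\mathcal F^{j+1}$ are handled as above'' fails, because such an $R$ need not have a maximal triple.
\end{itemize}
Applying the inequality instead to the smaller stopping cubes $P''\subsetneq R$ that make up $U^{j+1}\cap R$ does not help. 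Their triples may reach small subcubes of an adjacent stopping cube, where $f$ is not controlled by $\alpha_{j+1}$, and the factor $\mu(P'')^{1-p}$ then makes the bound useless. Your route would need an extra geometric lemma, presumably an analogue of N\ref{eqrem3QNeighbors5} for second neighbours after enlarging $\rho_0$. You neither state nor prove one.

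\textbf{The missing idea: stop on $L^p$ averages.} The paper runs Lemma \ref{lemCZIteration} on $(\phi w)^p$ at the levels $\alpha_j^p$, with $\rho=C_\mu^2\tau^{-1}$ and
$$\alpha_0=\mu(V)\Bigl(\avint_Vw^p\,d\mu\Bigr)^{1/p}\geq\Bigl(\avint_Q(\phi w)^p\,d\mu\Bigr)^{1/p}\quad\text{for } Q\in\W .$$
\begin{itemize}
\item The upper bound is then free: CZ\ref{itemCZU} gives $\int_{A^j}(\phi w)^p\,d\mu\leq C_\mu\alpha_j^p\mu(U^j)$, with no neighbour control at all.
\item The weak reverse H\"older inequality is used only in the lower direction, where nothing has to be linearised. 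For every $P\in\mathcal F^j$,
$$\alpha_j<\mu(Q)\Bigl(\avint_Pw^p\,d\mu\Bigr)^{1/p}\leq C_{RH}\,\mu(Q)\,\frac{\nu(3P)}{\mu(P)}\lesssim\avint_{3P}\phi w\,d\mu .$$
Absorbing the part of $3P$ outside $A^{j-1}$ gives $\alpha_j\mu(3P)\lesssim\int_{3P\cap A^{j-1}}\phi w\,d\mu$.
\item Lemma \ref{lemBesicovitchSelection}, applied to the whole of $\mathcal F^j$ (no maximal-triple subfamily is needed), then gives $\mu(U^j)\lesssim\alpha_j^{-1}\int_{A^{j-1}}\phi w\,d\mu$. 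This is \rf{eqBasicLevels}.
\end{itemize}

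The rest of your argument is fine: the admissibility check, the absorption estimate, the bootstrap through Lemma \ref{lemLevelToQ}, and the passage to $K$ through Lemma \ref{lemInfPhi}.
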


\begin{proof}
Let $\W$ be a Whitney decomposition of $V$, or $\W\coloneqq \{X\}$ if $V=X$,
and let $\phi$ be as in \rf{eqDefPhi}. Put
$$\alpha_0\coloneqq \mu(V)\left(\avint_{V}w^{p}\, d\mu\right)^{\frac1p},
\qquad \alpha_j\coloneqq \rho^{j}\alpha_0,\qquad
\rho\coloneqq C_\mu^{2}\tau^{-1},$$
with $0<\tau<1$ to be fixed. Since $\phi\equiv\mu(Q)$ on $Q\in\W$,
$$\avint_{Q}(\phi w)^p\, d\mu=\mu(Q)^{p-1}\int_{Q}w^p\, d\mu
\leq\mu(V)^{p-1}\int_{V}w^p\, d\mu=\alpha_0^{p},$$
so we may run the Calder\'on--Zygmund iteration of Lemma \ref{lemCZIteration}
inside each $Q\in\W$ with $f=(\phi w)^p$ and levels $\alpha_j^{p}$; in the
notation of that lemma this amounts to replacing $\tau$ by
$C_\mu^{1-2p}\tau^{p}$, which is admissible because $\rho^{p}\geq C_\mu$. We
write $\mathcal F^j$, $U^j$ as in \S\ref{secBL} and
$$A^j\coloneqq \{x\in V:\, \phi(x)w(x)>\alpha_j\}\subset U^j
\qquad (j\geq-1,\ U^{-1}\coloneqq V).$$

\emph{Step 1.} We claim that if
\begin{equation}\label{eqTauSmallGehring}
	\tau\leq\frac1{2C_{RH}C_\mu},
\end{equation}
then every $P\in\mathcal F^j$ satisfies
\begin{equation}\label{eqGehringStep1}
	\alpha_j\,\mu(3P)<2C_{RH}C_\mu^{3}\int_{3P\cap A^{j-1}}\phi w\, d\mu .
\end{equation}
Let $P\in\mathcal F^j$ and let $Q\in\W$ contain it; as in \S\ref{secBL},
$\avint_Q(\phi w)^p\, d\mu\leq\alpha_0^p\leq\alpha_j^p$ and CZ\ref{itemCZU}
force $P\subsetneq Q$, so Lemma \ref{lemNeighborWhitney} gives
$\mu(Q)\leq C_\mu^{2}\phi(x)$ for every $x\in3P$. Hence, by
CZ\ref{itemCZU}, by Definition \ref{defWeakDyadic} and by
$\mu(3P)\leq C_\mu\mu(P)$,
$$\begin{aligned}
\alpha_j&<\left(\avint_{P}(\phi w)^p\, d\mu\right)^{\frac1p}
=\mu(Q)\left(\avint_{P}w^p\, d\mu\right)^{\frac1p}\\
&\leq C_{RH}\,\mu(Q)\,\frac{\nu(3P)}{\mu(P)}
\leq C_{RH}C_\mu^{3}\avint_{3P}\phi w\, d\mu .
\end{aligned}$$
Splitting the last average over $A^{j-1}$ and its complement, where
$\phi w\leq\alpha_{j-1}=C_\mu^{-2}\tau\alpha_j$, gives
$$\alpha_j\,\mu(3P)<C_{RH}C_\mu^{3}\int_{3P\cap A^{j-1}}\phi w\, d\mu
+C_{RH}C_\mu\tau\,\alpha_j\,\mu(3P),$$
and \rf{eqTauSmallGehring} lets the last term be absorbed, proving
\rf{eqGehringStep1}.

\emph{Step 2.} We claim that, with
$C_1\coloneqq 1+2C_{\mathrm B}C_{RH}C_\mu^{4}\wt C_\mu$ and $\wt C_\mu$ as in
\rf{eqEnlargedBallDoubling},
\begin{equation}\label{eqGehringSecondStep}
	\int_{A^{j-1}}(\phi w)^p\, d\mu
	\leq C_1\,\alpha_j^{\,p-1}\int_{A^{j-1}}\phi w\, d\mu
	\qquad\text{for every }j\geq0 .
\end{equation}
Indeed, since $A^j\subset U^j$ and $\mathcal F^j$ is disjoint,
CZ\ref{itemCZU} gives
$$\int_{A^{j}}(\phi w)^p\, d\mu
\leq\sum_{P\in\mathcal F^j}\mu(P)\avint_{P}(\phi w)^p\, d\mu
\leq C_\mu\alpha_j^{p}\,\mu(U^j),$$
and Lemma \ref{lemBesicovitchSelection} applied to $\mathcal F^j$, followed by
\rf{eqGehringStep1}, yields
$$\begin{aligned}
\mu(U^j)&\leq\wt C_\mu\sum_{P\in\mathcal G}\mu(3P)
<\frac{2C_{RH}C_\mu^{3}\wt C_\mu}{\alpha_j}
\sum_{P\in\mathcal G}\int_{3P\cap A^{j-1}}\phi w\, d\mu\\
&\leq\frac{2C_{\mathrm B}C_{RH}C_\mu^{3}\wt C_\mu}{\alpha_j}
\int_{A^{j-1}}\phi w\, d\mu .
\end{aligned}$$
Adding
$\int_{A^{j-1}\setminus A^{j}}(\phi w)^p\, d\mu
\leq\alpha_j^{p-1}\int_{A^{j-1}\setminus A^{j}}\phi w\, d\mu$ gives
\rf{eqGehringSecondStep}.

\emph{Step 3.} Fix
\begin{equation}\label{eqTauChoiceGehring}
	\tau\coloneqq \min\left\{\frac{C_\mu^{2}}{(4C_1)^{\frac1{p-1}}},\
	\frac1{2C_{RH}C_\mu}\right\},
\end{equation}
so that \rf{eqTauSmallGehring} holds and $\rho^{\,p-1}\geq4C_1>2C_1$. Then
\rf{eqGehringSecondStep} is hypothesis \rf{eqBasicLevels} of Lemma
\ref{lemLevelToQ} for $\mu|_V$, $f=\phi w$ and this $p$, so for every
$p<q\leq q_0$, with $q_0$ as in \rf{eqQZero},
$$\int_{V}(\phi w)^q\, d\mu
\leq C_{C_1,\rho,p,q}\,\alpha_0^{\,q-p}\int_{V}(\phi w)^p\, d\mu
\leq C_{C_1,\rho,p,q}\,\mu(V)^{q+1}
\left(\avint_{V}w^{p}\, d\mu\right)^{\frac qp},$$
where we used $\phi\leq\mu(V)$ and the definition of $\alpha_0$. Since
$w\leq\phi w/\inf_K\phi$ on $K$,
$$\begin{aligned}
\left(\avint_{K}w^q\, d\mu\right)^{\frac1q}
&\leq\frac1{\mu(K)^{\frac1q}\inf_{K}\phi}
\left(\int_{V}(\phi w)^q\, d\mu\right)^{\frac1q}\\
&\leq\frac{C_{C_1,\rho,p,q}^{\frac1q}\,\mu(V)^{1+\frac1q}}
{\mu(K)^{\frac1q}\,\inf_{K}\phi}
\left(\avint_{V}w^{p}\, d\mu\right)^{\frac1p} ,
\end{aligned}$$
and Lemma \ref{lemInfPhi} bounds $\inf_K\phi$ from below, giving the stated
inequality.
\end{proof}

\subsection{From cubes back to balls}

\begin{proof}[Proof of Theorem \ref{theoCintro}]
Let $Q\in\QQ_V$, so that $3Q\subset V$. Cover $Q$ by restricted balls
$\{\Delta_i\}_{i\leq N}$ centred at points of $Q$ and of radius
$c\,\ell(Q)/\gamma$, with $c=c(A_0)$ chosen so that
$\gamma\Delta_i\subset3Q$; since $\diam Q\leq A_0\ell(Q)$, we may take
$N\leq C(d)\gamma^{d}$. Each $\gamma\Delta_i$ has radius comparable to
$\ell(Q)$, so $\mu(\gamma\Delta_i)\geq C_\mu^{-m_d}\mu(Q)$ with $m_d$
dimensional, and $\mu(\Delta_i)\leq\mu(3Q)\leq C_\mu\mu(Q)$. Hence, using
$\left(\sum_i a_i\right)^{1/p}\leq\sum_i a_i^{1/p}$,
$$\begin{aligned}
\left(\avint_{Q}w^p\, d\mu\right)^{\frac1p}
&\leq\sum_{i}\left(\frac{\mu(\Delta_i)}{\mu(Q)}
\avint_{\Delta_i}w^p\, d\mu\right)^{\frac1p}
\leq C_\mu C_{RH}\sum_{i}\avint_{\gamma\Delta_i}w\, d\mu\\
&\leq C(d,C_\mu)\,\gamma^{d}C_{RH}\,\frac{\nu(3Q)}{\mu(Q)} ,
\end{aligned}$$
because $\gamma\Delta_i\subset3Q$. Thus $\nu\in wRH_p(\mu,\QQ_V)$ with a
constant
$$C'_{RH}\leq C(d,C_\mu)\,\gamma^{d}C_{RH},$$
and Lemma \ref{lemGehringEnlarged} applies with $C'_{RH}$ in place of
$C_{RH}$. Since $\gamma$ enters only through $C'_{RH}$, and $C'_{RH}$ only
through the product $\gamma^{d}C_{RH}$, the exponent $\ve_0=q_0-p$ depends on
$\gamma$ and $C_{RH}$ only through that product.
\end{proof}

\subsection{The global self-improvement}

\begin{theorem}\label{theoGehringWeak}
Let $\mu$ be a doubling Radon measure in $\R^d$ and let $\nu\in wRH_p(\mu)$
with $1<p<\infty$. Then there is $\ve_0>0$, depending on $d$, $C_\mu$, $p$ and
$C_{RH}$, such that $\nu\in wRH_{p+\ve}(\mu)$ for every $0<\ve<\ve_0$.
\end{theorem}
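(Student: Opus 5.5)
The plan is to deduce Theorem \ref{theoGehringWeak} from Theorem \ref{theoCintro}, applied on each ball with a fixed geometry. The point is that the exponent $\ve_0$ there depends only on $d$, $C_\mu$, $p$ and $\gamma^dC_{RH}$, and not on the geometry. Fix $\gamma=2$ and let $\ve_0$ be the exponent given by Theorem \ref{theoCintro} for $\nu\in wRH_p(\mu)$; since $V$ can be any relatively open set, the hypothesis of that theorem holds on every $V$. Fix $0<\ve<\ve_0$ and a restricted ball $\Delta=\Delta_r(x)$. We must show
$$\left(\avint_{\Delta}w^{p+\ve}\,d\mu\right)^{\frac1{p+\ve}}\leq C\,\frac{\nu(2\Delta)}{\mu(\Delta)}$$
with $C$ independent of $\Delta$.

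Apply Theorem \ref{theoCintro} with $V\coloneqq\Delta_{3r/2}(x)$, which is bounded and relatively open, and $K\coloneqq\overline{\Delta}\cap X$, which is compact. If $\mu(K)=0$ there is nothing to prove. Here $\diam K\leq 2r$ and $\dist(K,X\setminus V)\geq r/2$, so their ratio is at most $4$. By doubling, $\mu(V)/\mu(K)\leq\mu(\Delta_{3r/2}(x))/\mu(\Delta_r(x))\leq C_\mu$. Hence the constant $C$ in Theorem \ref{theoCintro} depends only on $d$, $C_\mu$, $p$, $C_{RH}$ and $\ve$. Since $K\supset\Delta$ and $\mu(K)\leq C_\mu\mu(\Delta)$, we obtain
$$\left(\avint_{\Delta}w^{p+\ve}\,d\mu\right)^{\frac1{p+\ve}}\leq C\left(\avint_{V}w^{p}\,d\mu\right)^{\frac1p}.$$

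It remains to bound the $L^p$ average of $w$ over $V=\tfrac32\Delta$ by $\nu(2\Delta)/\mu(\Delta)$. This is the main obstacle: the hypothesis only controls $L^p$ averages over balls $\Delta'$ whose dilate $2\Delta'$ lies inside the region under consideration, here $2\Delta$. I would cover $\tfrac32\Delta$ by boundedly many restricted balls $\Delta_i=\Delta_{r/4}(y_i)$ with $y_i\in\tfrac32\Delta$. Their number is dimensional, since the centres can be taken $r/4$-separated in $\R^d$. Then $2\Delta_i\subset\Delta_{2r}(x)=2\Delta$, and doubling gives $\mu(\Delta_i)\geq C_\mu^{-m}\mu(\Delta)$ with $m$ dimensional. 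Applying \rf{eqWeakRH} on each $\Delta_i$, with $V$ taken to be the whole space since $\nu\in wRH_p(\mu)$, gives
$$\int_{\tfrac32\Delta}w^p\,d\mu\leq\sum_i\mu(\Delta_i)\left(C_{RH}\frac{\nu(2\Delta_i)}{\mu(\Delta_i)}\right)^p\leq C\,\mu(\Delta)^{1-p}\nu(2\Delta)^p.$$
Dividing by $\mu(\tfrac32\Delta)\geq\mu(\Delta)$ and taking $p$-th roots yields the required bound, with constants depending only on $d$, $C_\mu$, $p$ and $C_{RH}$.

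The one technical check is that Theorem \ref{theoCintro} is used with $K=\overline\Delta$, which is compact, rather than with the open $\Delta$. Note also that $\dist(K,X\setminus V)>0$ even if $X\setminus V$ is empty, in which case we interpret the ratio as $0$ and the corresponding factor is harmless. With this in place, $\ve_0$ is the one from Theorem \ref{theoCintro} for $\gamma=2$.
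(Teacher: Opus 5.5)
Your proposal is correct and follows the paper's proof essentially step for step. You apply Theorem \ref{theoCintro} with $\gamma=2$, $V=\frac32\Delta$ and $K=\overline\Delta$, and then undo the enlargement by covering $\frac32\Delta$ with boundedly many small restricted balls whose doubles lie in $2\Delta$; you use radius $r/4$ where the paper uses $r/8$, and both work. The one piece of bookkeeping you leave implicit is the passage from the $wRH_p$ inequality, phrased with $\nu(2\Delta)/\mu(\Delta)$, to the hypothesis of Theorem \ref{theoCintro}, phrased with $\avint_{2\Delta}w\,d\mu$: one uses $\nu(2\Delta)/\mu(\Delta)\leq C_\mu\avint_{2\Delta}w\,d\mu$ and applies that theorem with constant $C_\mu C_{RH}$, which leaves the stated dependence of $\ve_0$ unchanged.
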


\begin{proof}
Let $\Delta$ be a restricted ball. Since
$\frac{\nu(2\Delta)}{\mu(\Delta)}\leq C_\mu\avint_{2\Delta}w\, d\mu$, the
hypothesis of Theorem \ref{theoCintro} holds with $\gamma=2$ and constant
$C_\mu C_{RH}$; apply it with $V=\frac32\Delta$ and $K=\overline\Delta$. As
$\dist(K,X\setminus V)\geq\frac{r(\Delta)}2$, $\diam K\leq2r(\Delta)$ and
$\mu(V)\leq C_\mu\mu(K)$, the resulting constants do not depend on $\Delta$,
and for every $\ve<\ve_0$
$$\left(\avint_{\Delta}w^{p+\ve}\, d\mu\right)^{\frac1{p+\ve}}
\leq C\left(\avint_{\frac32\Delta}w^{p}\, d\mu\right)^{\frac1p} .$$
It remains to undo the enlargement. Cover $\frac32\Delta$ by restricted balls
$\{\Delta_i\}_{i\leq N}$ centred in $\frac32\Delta$ of radius
$\frac{r(\Delta)}8$, so that $2\Delta_i\subset2\Delta$, with $N$ dimensional
and $\mu(\Delta_i)\geq C_\mu^{-m_d}\mu(\Delta)$. Then \rf{eqWeakRH} gives
$$\int_{\frac32\Delta}w^p\, d\mu
\leq\sum_{i}\mu(\Delta_i)\avint_{\Delta_i}w^p\, d\mu
\leq C_{RH}^{p}\sum_{i}\frac{\nu(2\Delta_i)^{p}}{\mu(\Delta_i)^{p-1}}
\leq C\,\frac{\nu(2\Delta)^{p}}{\mu(\Delta)^{p-1}} ,$$
so $\left(\avint_{\frac32\Delta}w^p\, d\mu\right)^{1/p}
\leq C\frac{\nu(2\Delta)}{\mu(\Delta)}$ and the theorem follows.
\end{proof}

\begin{remark}\label{remGehringStrong}
The dilation cannot be dispensed with, and the dyadic and ball settings part
company here. On the basis $\DD\cup3\DD$ there is a \emph{strong} Gehring
lemma: if $\nu\in RH_p(\mu,\BB_{Q_0})$ with $1<p<\infty$ then
$\nu\in RH_{p+\ve}(\mu,\BB_{Q_0})$ for $\ve$ small, which is
\cite[Thm.~3.1]{AndersonWeirich}; one may also read it off from the proof of
Lemma \ref{lemGehringEnlarged} by taking $\W=\{Q_0\}$ and replacing $3P$ by
$P$ throughout. For balls in a general space of homogeneous type the
corresponding statement is false, by \cite[\S7.3]{AndersonHytonenTapiola};
for restricted Euclidean balls we do not know whether it holds; see \S\ref{secOpen}. For
metric measure spaces enjoying an annular decay property the dilation can be
removed, by \cite[Cor.~3.2]{Maasalo}.
\end{remark}

\section{Further remarks and open questions}
\label{secOpen}

We collect here what the preceding sections leave undecided.

\subsection*{The strong theory for restricted Euclidean balls}

Three implications that hold for the Lebesgue measure fail in a general space
of homogeneous type: that a Fujii--Wilson $A_\infty^{FW}$ weight satisfies a strong reverse H\"older inequality 
\rf{eqStrongRH} for some $p>1$; the converse; and the self-improvement
\rf{eqStrongRH} $\Rightarrow$ $RH_{p+\ve}$. The witnesses are
\cite[\S7.2]{AndersonHytonenTapiola}, \cite[Ex.~4.1]{KorteKansanen} and
\cite[\S7.3]{AndersonHytonenTapiola}, respectively, and all three are built on
the $\ell^\infty$ metric in a way that the introduction describes. For
restricted Euclidean balls, Example \ref{exLadder} settles the second: the
converse fails here too. The first and the third we leave open.

For the third, one obstruction is worth recording. If a level set $S$ of
$w$ is such that the proportion $\mu(\Delta\cap S)/\mu(\Delta)$ runs through
an interval as $\Delta$ varies, then testing \rf{eqStrongRH} on the ball where
that proportion equals the ratio of the two levels of $w$ bounds the
oscillation of $w$, and no counterexample of the two-piece kind survives. That
proportion is continuous in the radius as soon as $\mu$ charges no sphere
centred at the point in question; and the ball that breaks the strong Gehring
lemma in \cite[\S7.3]{AndersonHytonenTapiola} is precisely one whose boundary
carries the whole of the line $A$. So a Euclidean counterexample would have to
be built on a measure charging spheres.

By \cite[Thm.~5.2]{AndersonWeirich} a doubling weight satisfies the strong
Gehring lemma in any space of homogeneous type, and the same argument applied
to \cite[Cor.~5.5]{AndersonHytonenTapiola} covers the first implication. So
both open questions concern non-doubling weights only, and Example
\ref{exLadder} is a natural place to look: its measure $\nu$ is not doubling,
but its weight lies in $RH_\infty$ and is therefore too good. What one would
have to graft onto it is the two-piece profile of
\cite[\S7]{AndersonHytonenTapiola} --- a background level $\ve_k$ on a set of
comparable measure, carrying a spike whose $L^p$ norm is a large multiple of
its $L^1$ norm.

None of this touches the weak classes of \S\ref{secWeak}, where the two
averages are taken over different balls and no such collapse is expected; the
harmonic measures that motivate \rf{eqBLintro} are not doubling either.

\subsection*{The class $QC$ on balls and on cubes}

Proposition \ref{propoDoublingAllEquivalent} identifies the classes carrying a
quantifier on the two bases, but says nothing about the plain union
$QC$. As explained in Remark \ref{remQCnotQCD}, the transfer lemma degrades
the parameter $\delta$ by a factor $\widehat C_\mu$, and for small $\delta$ this
takes it outside the admissible range. We do not know whether
$QC(\mu)=QC(\mu,\BB_{\mathcal{D}})$ for every doubling $\mu$; a counterexample would
have to consist of a pair $(\mu,\nu)$ satisfying \rf{eqDefQC} on restricted
balls for one pair $(\delta,\ve)$, with $\delta$ necessarily small, and for no
pair over $\DD\cup3\DD$. The same question can be asked for the weak classes
of \S\ref{secWeak}, where the analogue of $A_\infty^{QC}$ --- the class
obtained from Definition \ref{defWeakClasses} by exchanging the two
quantifiers --- has not been considered here at all. The class $wRH_1^{QC}$ itself is what Kinnunen, Kurki and Mudarra call
weak $A_\infty$, and they show that it admits several equivalent
descriptions; in the notation of this paper, among them a weak Fujii--Wilson
condition and a weak reverse H\"older inequality for a power of $w$.

\subsection*{Sharpness of the thresholds}

Theorem \ref{theoAintro} requires $\ve_0\leq\ve_*$ with $\ve_*$ depending on
$d$ and $C_\mu$, and Lemma \ref{lemBothDoubling} requires
$\delta_0>1-C_\mu^{-1}$. Thresholds of this kind cannot be dispensed
with. In $\R^{d}$ let $S\coloneqq \{x:\, 0\leq x_{d}\leq1\}$ and $w=\chi_S$.
For every cube $Q$ meeting $S$, the sets $Q\cap S$ and $2Q\cap S$ are boxes
whose first $d-1$ sides have lengths $\ell(Q)$ and $2\ell(Q)$, while the last
side of $Q\cap S$ is no longer than that of $2Q\cap S$; hence
$$\nu(F)\leq\nu(Q)=\mu(Q\cap S)\leq2^{1-d}\mu(2Q\cap S)=2^{1-d}\nu(2Q)$$
for every measurable $F\subset Q$, that is, \rf{eqWeakQC} with $\delta=1$ and
$\ve=2^{1-d}$. Yet $w$ lies in no $wRH_p$: this is an example of Sawyer,
reported in \cite[Ex.~4.2]{KinnunenKurkiMudarra}. So $\ve_*\leq2^{1-d}$, and a
single pair $(\delta,\ve)$ can characterize the class only if $\ve$ is below a
dimensional constant --- no smallness of $\delta$ is involved.
Kinnunen, Kurki and Mudarra need a
threshold of the form $\eta<c_d^{-5}$, for a dimensional constant $c_d$,
in their own characterization. What the
optimal dependence on $C_\mu$ is, in any of these statements, we do not know.

%

%

\subsection*{Dropping the doubling hypothesis}

Doubling is used here in two distinct ways: to run the Calder\'on--Zygmund
iteration on $\DD\cup3\DD$, and to compare cubes with balls. Orobitg and
P\'erez \cite{OrobitgPerez} developed an $A_p$ theory for non-doubling
measures in $\R^d$ by replacing the doubling hypothesis with
$\mu(\partial Q)=0$ and using a Besicovitch-type Calder\'on--Zygmund
decomposition. Whether the weak classes of \S\ref{secWeak}, and in particular
the Bennewitz--Lewis condition, admit a theory in that setting we leave open;
the interest is that harmonic measure, which is the source of \rf{eqBLintro},
need not be doubling.


\section*{On the use of artificial intelligence}

The mathematics of this paper is the author's own, and every statement and
every proof in it has been verified by him; he is solely responsible for them.
An artificial intelligence assistant (Claude, Anthropic) was used as a tool in
its preparation, in four roles, which we record for the reader's information.

First, as a pre-referee: reading the argument and reporting gaps, inconsistent
notation, hypotheses used but not stated, and constants that did not match
between a lemma and its application. Second, in the reorganization of material
the author had written earlier, together with editorial comments on it; the
unified form of Lemma \ref{lemLevelToQ}, the explicit range of exponents in
Remark \ref{remRangeOfQ}, and the isolation of Lemmas
\ref{lemBesicovitchSelection} and \ref{lemInfPhi} came out of that process.
Third, in contrasting the results with the literature, which produced most of
the attributions of \S\ref{secIntro} and the comparisons recorded throughout.
Fourth, in the search for counterexamples:  Example \ref{exLadder} together
with Proposition \ref{propoThinAnnuli}, were arrived at in that dialogue.

\section*{Acknowledgements}

This paper is a by-product of my effort to understand the theory of harmonic measure and to make it less forbidding for newcomers to enter it. I am grateful to Xavier Tolsa, with whom I am writing a book on the subject, to appear soon, for the many conversations that shaped this work. These tools took shape while I was supervising the master's thesis of Roger Rams Domènech, whom I thank warmly, as I do Marta Cano Cagigas, a student in the Barcelona Introduction to Mathematical Research programme, who had the patience to read through this material carefully. I also thank Carlos Pérez and Carlos Mudarra for their conversations and for their insight into the relevant literature.

\bibliographystyle{alpha}
\bibliography{../../../bibtex/Llibres}

\end{document}